\documentclass[11pt,reqno]{article}
\usepackage{bbding}
\usepackage{pifont}
\usepackage{amsmath}
\usepackage{mathrsfs}
\usepackage{amsfonts,bm}
\usepackage{amsthm}
\usepackage{epsfig}
\usepackage[]{harpoon}
\usepackage[active]{srcltx}
\usepackage{indentfirst,latexsym}
\usepackage{psfrag}
\usepackage[]{amssymb}
\usepackage{cite}
\usepackage{caption}

\newcommand{\new}{\newcommand*}\new{\rnew}{\renewcommand*}
\new{\newe}{\newenvironment*}\new{\stl}{\setlength}
\stl{\textwidth}{155mm}\stl{\textheight}{22cm}\stl{\headheight}{0cm}
\stl{\topmargin}{0cm}\stl{\oddsidemargin}{0.5cm}\stl{\evensidemargin}{0cm}
\rnew{\arraystretch}{1.1}\rnew{\baselinestretch}{0.92}
\renewcommand{\thefootnote}{\ding{73}}
\newtheorem{thm}{Theorem}

\newtheorem{lem}{Lemma}[section]

\newtheorem{defn}{Definition}[section]
\newtheorem{rem}{Remark}

\newtheorem{asu}{Assumption}

\newcommand{\dps}{\displaystyle}

\newcommand{\fr}{\frac}

\newcommand{\pa}{\partial}

\numberwithin{equation}{section}
\newe{keywords}
   {\begin{quote}{\bf Keywords:}}
      {\end{quote}}
\newe{AMS}
   {\begin{quote}{\bf MSC 2020:}}
      {\end{quote}}
\newe{MSC}{\vspace*{5mm}
    {\noindent\it Mathematics Subject Classification(2020):}}{}
\new{\sect}[1]{\section{#1}\setcounter{equation}{0}
 \setcounter{thm}{0}\setcounter{lmm}{0}\setcounter{rmk}{0} }

\begin{document}
%% ==================== Title =======================================

\title{Existence and singularity formation for the supersonic expanding wave of radially symmetric non-isentropic compressible Euler equations}

\author{
Geng Chen\thanks{Department of Mathematics,
University of Kansas, Lawrence, KS,
66045 ({\tt gengchen@ku.edu}).}
\and
Faris A. El-Katri\thanks{Department of Mathematics,
University of Kansas, Lawrence, KS,
66045 ({\tt elkatri@ku.edu}).}
\and
Yanbo Hu\thanks{Department of Mathematics, Zhejiang University of Science and Technology, Hangzhou, 310023, PR China ({\tt yanbo.hu@hotmail.com}).}
}

\rnew{\thefootnote}{\fnsymbol{footnote}}

\date{}

\maketitle
%%========================= abstract ==================================
\begin{abstract}
This paper studies the existence and singularity formation of supersonic expanding waves for the radially symmetric non-isentropic compressible Euler equations of polytropic gases. We introduce a suitable pair of gradient variables to characterize the rarefaction and compression properties of the solutions. Based on their Riccati equations, we construct several useful invariant domains to
establish a series of priori estimates of solutions under some assumptions on the initial data.
We show that the solution is smooth in the characteristic triangle or quadrangle domain if both of these two gradient variables are non-negative at the initial time. On the other hand, when one of these two variables is very negative at some initial point, the solution forms a
singularity in finite time.
\end{abstract}

\begin{keywords}
Non-isentropic Euler equations, radially symmetry, singularity formation, supersonic wave, hyperbolic conservation laws.
\end{keywords}

\begin{AMS}
76N15, 35L65, 35L67.
\end{AMS}

%%$$$$$$$$$$$$$$$$$$$$$$$$$$$$$$$$ section 1 %$$$$$$$$$$$$$$$$$$$$$$$$$$$$$$$
\section{Introduction}\label{S1}

The radially symmetric non-isentropic compressible Euler equations reads that \cite{courant}
\begin{align}\label{1.1}
\begin{split}
(r^m\rho)_t+(r^m\rho u)_r=&\ 0, \\
(r^m\rho u)_t+(r^m\rho u^2)_r+ r^m p_r=&\ 0, \\
S_t+uS_r=&\ 0,
\end{split}
\end{align}
where $t\geq0$ and $r>0$ are the time-space independent variables, $(\rho, u, S)(r,t)$ are the unknown variables having their ordinary meaning: $\rho(r,t)$ is the density, $u(r,t)$ is the particle velocity and $S(r,t)$ is the entropy. The constant $m\geq1$ is an integer, particularity, system \eqref{1.1} with $m=1,2$ correspond to cylindrically and spherically symmetric flows, respectively. The pressure $p$ is the function of $\rho$ and $S$, which takes the following form for the polytropic gas
\begin{align}\label{1.2}
p=Ke^{\frac{S}{c_v}}\rho^\gamma,
\end{align}
where $K>0$ is a constant, the constant $c_v>0$ is the specific heat at constant volume, the constant $\gamma>0$ is the adiabatic exponent which lies between 1 and 3 for most gases \cite{courant, smoller}. When $S\equiv Const.$, system \eqref{1.1} reduces to the radially symmetric isentropic Euler equations
\begin{align}\label{1.3}
\begin{split}
(r^m\rho)_t+(r^m\rho u)_r=&\ 0, \\
(r^m\rho u)_t+(r^m\rho u^2)_r+ r^m (K\rho^\gamma)_r=&\ 0.
\end{split}
\end{align}

As one of the most fundamental physical models of nonlinear hyperbolic conservation laws, the
compressible Euler equations have been extensively studied over a long history. Its biggest feature is that, even when initial data are small and smooth, the classical solution may form gradient blowup in finite time. What kind of initial data make smooth solutions exist or form singularities is one of the central issues of nonlinear hyperbolic conservation laws. In the pioneer work \cite{lax2}, Lax provided a beautiful answer to the $2\times2$ reducible homogeneous strictly hyperbolic systems. Subsequently, the general $n\times n$ systems were discussed among others in \cite{Ali, Schen, John, Kong, LZK, Liu1}. These early works show that, for the initial data of small smooth perturbation near a constant state, the genuinely nonlinear characteristic field can develop singularity in finite time if initial compression exists.

However, the study on the corresponding theory of large data is still limited, even for the important models such as the compressible Euler equations. For the 1-d isentropic Euler equations (system \eqref{1.3} with $m=1$), the Lax's framework can be directly employed to obtain a complete dichotomy result when $\gamma\geq3$, that is, the finite time singularity forms if and only if the initial compression exists. Nevertheless, this result does not applicable to the most physical case $1<\gamma<3$ due to the degeneracy of the Riccati equations by the disappearance of density.
Thus, a suitable density lower bound estimate is first needed to establish to acquire the relevant conclusions. In \cite{CPZ}, Chen, Pan, and Zhu derived a time-dependent lower
bound on the density when initial data far from the vacuum. Based on this density lower bound, they attained a complete picture on the finite time singularity formation for 1-d isentropic Euler equations with $1<\gamma<3$. Moreover, a singularity formation result for the 1-d non-isentropic Euler equations with strong compression initial data was also verified in \cite{CPZ}, using earlier results given in \cite{G3,CYZ}.
The optimal time-dependent lower bound on density for 1-d isentropic and non-isentropic Euler equations was identified by Chen in \cite{G9, G10}. In \cite{CCZ}, Chen, Chen, and Zhu established a sufficient condition for the formation of singularities of solutions for the non-isentropic Euler equations with large initial data that allow a far-field vacuum. They also constructed a global continuous non-isentropic solution for initial data containing a weak compression. The formations of singularities of smooth solutions for the 1-d isentropic and non-isentropic relativistic Euler equations were discussed by Athanasiou, Bayles-Rea and Zhu \cite{A-Zhu1, A-Zhu2}.

For the radially symmetric isentropic Euler equations \eqref{1.3}, one key challenge is overcoming the difficulties caused by geometric source terms. A natural idea is to mimic the 1-d case to adopt the gradient of Riemann invariants as the new variables to derive the a priori $C^1$ estimates of solutions. There is no doubt that, due to the presence of geometric terms, such a programme leads the Riccati equations being non-homogeneous, which poses great trouble in constructing the desired invariant domain of solutions. By applying the Riccati system in \cite{CYZ}, Cai, Chen, and Wang \cite{CCW} analyzed the complicated non-homogeneous Riccati equations through a very tedious process to demonstrate the singularity formation of supersonic expanding wave for \eqref{1.3} with $1<\gamma<3$. Based on the idea that the stationary solution is neither rarefactive nor compressive, in a recent paper \cite{CEHS1}, we found a pair of accurate gradient variables (called rarefaction and compression characters) to obtain the desired homogeneous Riccati equations. Then we proved that, for supersonic expanding waves of \eqref{1.3} with $1<\gamma<3$, smooth solutions with rarefactive initial data exist global-in-time, while singularity forms in finite time when the initial data include strong compression somewhere. This pair of gradient variables was subsequently adopted to study the  singularity formation for the supersonic inward wave of \eqref{1.3} in \cite{CEHS2}.

There have also many other works on the singularity formation for multi-d compressible Euler equations under various assumptions on the initial data. In \cite{sideris}, Sideris
established some singularity formation results for compactly supported initial data. The framework of Sideris involves some averaged quantities to avoid
local analysis of solutions, which makes that it is difficult to provide specific information on the properties of breakdown. A geometric framework was introduced by Christodoulou \cite{Ch1} to probe the singularity formation for the relativistic Euler equations in multispace dimensions. In \cite{Ch2}, Christodoulou and Miao applied the geometric framework to discuss the shock formation of the compressible isentropic irrotational Euler equations with compactly supported initial data near a constant solution. The 2-d case with nonzero vorticity was studied by Luk and Speck \cite{Luk1}, also see the related papers \cite{Di, Luk2} etc. In addition, we refer the reader to a series of papers \cite{Vicol0, Vicol1, Vicol2, Vicol3, Vicol5, Vicol4, SV, Yin} on the contribution of constructing shocks or pre-shocks.

The global existence of smooth solutions to the compressible Euler equations has attracted many attention as well. The complete existence results of the 1-d compressible isentropic Euler equations with rarefaction initial data can be found in \cite{lax2, CCZ, CPZ}. Partial results on the non-isentropic equations were presented in \cite{CCZ, CPZ, CY, Zhu}. Grassin \cite{Grassin} verified the global existence of smooth solutions to the multi-d compressible isentropic Euler equations under some assumptions of smallness and smoothness initial data. Based on the affine motions constructed by Sideris \cite{sideris1}, the global existence of near-affine solutions of multi-d compressible isentropic and non-isentropic Euler equations were established in among others \cite{HJ, Rickard1, Rickard2, SS}. When initial data are a small smooth perturbation near a constant state, Godin \cite{Godin} analyzed the lifespan of smooth solutions for the spherically symmetric non-isentropic Euler equations \eqref{1.1}. In \cite{Lai}, Lai and Zhu obtained the global existence of smooth solutions to the 2-d axisymmetric Euler equations for a class of initial data containing constant states near the origin.

In this paper, we focus on the existence and singularity formation of supersonic expanding wave for radially symmetric non-isentropic Euler equations \eqref{1.1} with \eqref{1.2}. A solution of \eqref{1.1} is called a supersonic expanding wave if it satisfies $u(r,t)>\sqrt{p_\rho(r,t)}>0$ at every point $(r,t)$ in the domain.
Compared with the isentropic case, the main challenge currently is dealing with the terms brought by the entropy function. Overcoming the effect of varying entropy is highly nontrivial, especially for multi-d solutions of Euler equations.

For the isentropic Euler equations \eqref{1.3}, we recently constructed in \cite{CEHS1} gradient variables by differentiating some special function that takes constant value in the stationary solution. The underlying idea stems from the observation that the stationary solution is neither rarefactive nor compressive; in other words, the chosen gradient variables should vanish in the stationary regime. Guided by this principle, we introduced appropriate gradient variables to characterize the rarefaction and compression properties of solutions. Owing to the special structure of these gradient variables, the corresponding Riccati equations are homogeneous, which plays an important in the analysis carried out in our previous work \cite{CEHS1}.

For the non-isentropic system \eqref{1.1} considered in the present work, we first find that the quantity $r^m \rho u$ is constant in stationary solutions. With this quantity as a starting point,
we construct the admissible gradient variables $(\alpha,\beta)$ as the following forms:
\begin{align}\label{1.4}
\alpha=-\frac{\partial_1(r^m \rho u)}{r^m \rho c_3}, \qquad \beta=-\frac{\partial_3(r^m \rho u)}{r^m \rho c_1},
\end{align}
where
\begin{align}\label{1.5}
c_1 =u-\sqrt{p_\rho(\rho, S)},\ \ c_3 =u+\sqrt{p_\rho(\rho, S)},\quad \partial_1=\partial_t+c_1 \partial_r, \ \ \partial_3=\partial_t+c_3 \partial_r.
\end{align}
Notably, the presence of entropy is clearly reflected in the structure of $(\alpha,\beta)$.
In particular, in contrast to the isentropic setting, the Riccati equations of $(\alpha,\beta)$ in the present non-isentropic framework are nonhomogeneous, a direct consequence of the spatially varying entropy. To dispose of the quantities $S_r$ and $S_{rr}$ appearing in the non-homogeneous terms, we reformulate them in Lagrangian coordinates, even though this renders the Riccati equations more complicated. Then the terms involving $S$ can be estimated and the invariant domains of the solution itself and gradient variables can be established under some suitable assumptions on the initial data. Especially, the supersonic expanding property of the solution can be preserved throughout the  solving domain under these initial conditions.
Furthermore, we are fortunate to find that all entropy-related coefficients are multiplied by the sound speed $\sqrt{p_\rho(\rho, S)}$ in the governing equations. This key feature enables us to establish a positive lower bound for the density and for some certain specially weighted gradient variables, which are crucial for us to complete the analysis.

\begin{defn}\label{def1}
Consider a $C^1$ solution of \eqref{1.1}. At any point $(x,t)$, we define the local rarefaction/compression character as
\begin{itemize}
\item In the $1$-character direction, the solution is rarefaction if $\beta>0$, compression if $\beta<0$.
\item  In the $3$-character direction, the solution is rarefaction if $\alpha>0$, compression if $\beta<0$.
\end{itemize}
\end{defn}

In this paper, we may slightly abuse the notation to say the solution is rarefactive at some point when both $\alpha\geq0$ and $\beta\geq0$.

The main results of the paper can be roughly summarized as follows.

\begin{itemize}

\item Let initial data $(\rho_0, u_0, S_0)(r)$ satisfy Assumptions \ref{asu1}-\ref{asu3} on the interval $[b_1,b_2]$ with $b_1>0$. Moreover, $\alpha(r,0)\geq0$ and $\beta(r,0)\geq0$ on $[b_1,b_2]$. Then the solution is smooth in the characteristic triangle or quadrangle domain generated by interval $[b_1,b_2]$.(Theorem \ref{thm1})

\item Let initial data $(\rho_0, u_0, S_0)(r)$ satisfy Assumptions \ref{asu1} and \ref{asu2} on the interval $[b_1,b_2]$ with $b_1>0$. If $\alpha(r,0)$ or $\beta(r,0)$ is very negative at some point on $[b_1,b_2]$, then the solution forms a singularity in finite time.(Theorem \ref{thm2})
\end{itemize}

The Assumptions \ref{asu1}-\ref{asu3} and the precise statements of these conclusions are presented in Section \ref{S22} below. It is well known that the key to proving the existence or singularity formation of smooth solution is to establish the $C^1$-estimates of the solution. These estimates come from the invariant domains of the solution, which rely on the non-homogeneous Riccati equations of $(\alpha,\beta)$.

These two results together indicate that our definition of the local rarefaction and compression character in Definition \ref{def1} is the {\it{correct}} definition in line with the rarefactive and compressive phenomena for the gas dynamics.

We note the first result on the large time existence of rarefactive non-isentropic supersonic expanding solutions is much more surprising than the second result on the singularity formation,  showing the strong initial compression will produce a singularity in finite time.
 Even for 1-d non-isentropic solutions, there are only some very special large time classical existence examples. For example, see \cite{CCZ}. In this paper, we find a large class of classical radial symmetric solutions for Euler equations.

The rest of the paper is organized as follows.
In Section \ref{S2}, we derive the characteristic equations in terms of the Riemann variables, and then present a detailed statement of the assumptions and main conclusions of the paper.
In Section \ref{S3}, we derive the Riccati equations of the gradient variables $(\alpha, \beta)$.
In Section \ref{S4}, we use the idea of invariant domain and the Riccati equations of $(\alpha, \beta)$ to establish the a priori $C^1$ estimate of the solution, including a pair of special weighted estimates of $(\alpha, \beta)$. Finally, in Section \ref{S5}, we are based on the previous estimates to complete the proof of the theorems of the paper. The global existence and singularity formation of smooth solution are demonstrated in Sections \ref{S51} and \ref{S52}, respectively.

\section{The equations, assumptions and main results}\label{S2}

In this section, we derive the characteristic equations of \eqref{1.1} and simplify the gradient variables $(\alpha,\beta)$ defined in \eqref{1.4} by introducing the Riemann variables. Then we provide the assumptions and main results of the paper in details.

\subsection{The characteristic equations}\label{S21}

Let $\gamma>1$. We introduce the following new unknown variable $h$ to take place of $\rho$
$$
h=\sqrt{p_\rho}=\sqrt{K\gamma}\,e^{\frac{S}{2c_v}}\rho^\frac{\gamma-1}{2}.
$$
or
\begin{align}\label{a1}
\rho=\gamma_k e^{-\fr{S}{c_\gamma}}h^{\fr{2}{\gamma-1}},
\end{align}
where
$$
\gamma_k=(K\gamma)^{\fr{1}{1-\gamma}},\quad c_\gamma=c_v(\gamma-1).
$$
The quantity $h$ is called local sound speed in gas dynamics. For smooth solutions, we can rewrite system \eqref{1.1} as
\begin{align}\label{2.1}
\begin{split}
h_t+uh_r+\fr{\gamma-1}{2}hu_r+\fr{\gamma-1}{2}\frac{m uh}{r}=&\ 0, \\
u_t+uu_r+\fr{2}{\gamma-1}hh_r-\fr{h^2}{\gamma c_\gamma}S_r=&\ 0, \\
S_t+uS_r=&\ 0.
\end{split}
\end{align}
System \eqref{2.1} can be written in the matrix form
$$
\textbf{U}_t+\textbf{A}\textbf{U}_r=\textbf{F},
$$
where
\begin{align*}
\textbf{U}=
\left(
\begin{array}{c}
h \\
u \\
S
\end{array}
\right),\quad
\textbf{A}=
\left(
\begin{array}{ccc}
u  & \fr{\gamma-1}{2}h  &  0 \\
\fr{2}{\gamma-1}h  &  u  &  -\fr{1}{\gamma c_\gamma}h^2 \\
0  &  0  &  u
\end{array}
\right),\quad
\textbf{F}=
-\fr{(\gamma-1)muh}{2r} \left(
\begin{array}{c}
1 \\
0 \\
0
\end{array}
\right).
\end{align*}
The three eigenvalues of $\textbf{A}$ are
\begin{align}\label{2.2}
c_1=u-h,\quad c_2=u, \quad c_3=u+h.
\end{align}

We then introduce the Riemann variables
\begin{align}\label{2.3}
w=u+\frac{2}{\gamma-1}h,\quad z=u-\frac{2}{\gamma-1}h,
\end{align}
and the directional derivatives
\begin{align}\label{2.4}
\pa_i=\pa_t+c_i\pa_r,\ \ (i=1,2,3).
\end{align}
By direct calculations, the governing equations of $(w,z)$ read that
\begin{align}\label{2.5}
\begin{split}
\pa_3w=\frac{\gamma-1}{16c_v\gamma}(w-z)^2S_r - \frac{m(\gamma-1)}{8r}(w^2-z^2), \\
\pa_1z=\frac{\gamma-1}{16c_v\gamma}(w-z)^2S_r + \frac{m(\gamma-1)}{8r}(w^2-z^2).
\end{split}
\end{align}
In order to deal with the term $S_r$, we consider the Lagrangian coordinate variable $\xi=\xi(r,t)$, which is defined by
\begin{align}\label{2.6}
\xi=\int_{0}^r\zeta^m\rho(\zeta,t)\ {\rm d}\zeta.
\end{align}
Denote
\begin{equation}
\label{st}
\widetilde{S}(\xi,t)=S(r,t)
\end{equation}
It follows by \eqref{2.6} and \eqref{a1} that
\begin{align}\label{2.7}
S_r=r^m\rho \widetilde{S}_\xi =\gamma_k r^m e^{-\fr{S}{c_\gamma}}h^{\fr{2}{\gamma-1}}\widetilde{S}_\xi,
\end{align}
and subsequently
\begin{align}\label{2.8}
S_{rr}=&\Big(\fr{m}{r}+\fr{\rho_r}{\rho}\Big)S_r+r^{2m}\rho^2 \widetilde{S}_{\xi\xi} \notag \\
=&\Big(\fr{m}{r}+\fr{\rho_r}{\rho}\Big)r^m\rho \widetilde{S}_\xi+r^{2m}\rho^2 \widetilde{S}_{\xi\xi} \notag \\
=& \Big(\fr{m}{r}+\fr{2}{\gamma-1}\fr{h_r}{h}-\fr{\gamma_k}{c_\gamma}r^{m}  e^{-\fr{S}{c_\gamma}}h^{\fr{2}{\gamma-1}}\widetilde{S}_\xi\Big)\gamma_k r^m e^{-\fr{S}{c_\gamma}}h^{\fr{2}{\gamma-1}}\widetilde{S}_\xi \notag \\
&+\gamma_{k}^2r^{2m}e^{-\fr{2S}{c_\gamma}}h^{\fr{4}{\gamma-1}} \widetilde{S}_{\xi\xi}.
\end{align}
Putting \eqref{2.7} into \eqref{2.5} yields
\begin{align}\label{2.9}
\begin{split}
\pa_3w=\frac{m(\gamma-1)}{8r} \bigg\{\frac{\gamma_k}{2m c_v\gamma}r^{m+1}e^{-\fr{S}{c_\gamma}}h^{\fr{2}{\gamma-1}}\widetilde{S}_\xi(w-z)^2 - (w^2-z^2)\bigg\}, \\
\pa_1z=\frac{m(\gamma-1)}{8r} \bigg\{\frac{\gamma_k}{2m c_v\gamma}r^{m+1}e^{-\fr{S}{c_\gamma}}h^{\fr{2}{\gamma-1}}\widetilde{S}_\xi(w-z)^2 + (w^2-z^2)\bigg\}.
\end{split}
\end{align}

We next calculate the gradient variables $(\alpha, \beta)$ defined in \eqref{1.4}. According to \eqref{2.1} and \eqref{2.4}, one has for $c_3\neq0$ and $r>0$
\begin{align}\label{2.10}
\alpha
&=-\frac{\partial_1(r^m\rho u)}{r^m\rho c_3}
= -\frac{\partial_t(r^m \rho u)}{r^m\rho c_3}-c_1\frac{\partial_r(r^m \rho u)}{r^m\rho c_3}  \notag
\\
&= \fr{1}{r^m\rho c_3}\bigg\{(r^m\rho u^2)_r +r^mp_r-(u-h)(r^m\rho u)_r\bigg\}   \notag \\
&=\fr{1}{\rho c_3}\bigg\{u^2\rho_r +2\rho uu_r +\fr{m}{r}\rho u^2 +h^2\rho_r +\fr{\rho}{\gamma c_v}h^2S_r-(u-h)\Big(\rho u_r+u\rho_r+\fr{m}{r}\rho u\Big)\bigg\}  \notag \\
&=\fr{1}{\rho c_3}\bigg\{\rho c_3u_r +hc_3\rho_r +\fr{\rho}{\gamma c_v}h^2S_r +\fr{m}{r}\rho uh\bigg\}   \notag \\
&=u_r +\fr{h}{\rho}\rho_r+\fr{h^2}{\gamma c_v c_3}S_r +\fr{m uh}{rc_3}   \notag
\\
&=u_r +\fr{h}{\rho}\cdot \fr{\rho}{\gamma-1}\Big(\fr{2}{h}h_r-\fr{1}{c_v}S_r\Big)+\fr{h^2}{\gamma c_v c_3}S_r +\fr{m uh}{rc_3}   \notag \\
&=u_r +\fr{2}{\gamma-1}h_r -\fr{h(\gamma u+h)}{\gamma c_\gamma c_3}S_r +\fr{m uh}{rc_3}.
\end{align}
Doing a similar computation for $\beta$ deduces for $c_1\neq0$ and $r>0$
\begin{align}\label{2.11}
\beta
&=-\frac{\partial_3(r^m\rho u)}{r^m\rho c_1}=
\fr{1}{r^m\rho c_1}\bigg\{(r^m\rho u^2)_r +r^mp_r-(u+h)(r^m\rho u)_r\bigg\}   \notag \\
&=\fr{1}{\rho c_1}\bigg\{\rho c_1u_r -hc_1\rho_r +\fr{\rho}{\gamma c_v}h^2S_r -\fr{m}{r}\rho uh\bigg\}   \notag \\
&=u_r -\fr{h}{\rho}\cdot \fr{\rho}{\gamma-1}\Big(\fr{2}{h}h_r-\fr{1}{c_v}S_r\Big)+\fr{h^2}{\gamma c_v c_1}S_r -\fr{m uh}{rc_1}   \notag \\
&=u_r -\fr{2}{\gamma-1}h_r +\fr{h(\gamma u-h)}{\gamma c_\gamma c_1}S_r-\fr{m uh}{rc_1}.
\end{align}
Combining \eqref{2.3}, \eqref{2.10} and \eqref{2.11}, we have
\begin{align}\label{2.12}
\begin{split}
\alpha=&w_r-\fr{h(\gamma u+h)}{\gamma c_\gamma c_3}S_r +\fr{m uh}{rc_3}, \\
\beta=&z_r+\fr{h(\gamma u-h)}{\gamma c_\gamma c_1}S_r-\fr{m uh}{rc_1},
\end{split}
\end{align}
provided that $c_1 c_3\neq0$ and $r>0$. Moreover, there holds
\begin{align}\label{2.13}
\begin{split}
u_r=&\fr{1}{2}(\alpha+\beta)-\fr{uh^2}{\gamma c_v c_1c_3}S_r +\fr{muh^2}{rc_1c_3}, \\
h_r=&\fr{\gamma-1}{4}\bigg(\alpha-\beta+\frac{2h(\gamma u^2-h^2)}{\gamma c_\gamma c_1c_3}S_r-\frac{2m u^2h}{rc_1c_3}\bigg).
\end{split}
\end{align}

For later applications, we here give the equations of $\pa_ih$
\begin{align}\label{a4}
\begin{split}
\partial_1h &= -\frac{\gamma-1}{2}h\alpha
    -\frac{h^2(\gamma u+h)}{2\gamma c_vc_3}r^m\rho \widetilde{S}_\xi
    -\frac{(\gamma-1)mu^2h}{2rc_3},
    \\
\partial_2h &= -\frac{\gamma-1}{4}h(\alpha+\beta)
    +\frac{(\gamma-1)uh^3}{2\gamma c_vc_1c_3}r^m\rho \widetilde{S}_\xi
    -\frac{(\gamma-1)mu^3h}{2rc_1c_3},
    \\
\partial_3h &= -\frac{\gamma-1}{2}h\beta
    +\frac{h^2(\gamma u-h)}{2\gamma c_vc_1}r^m\rho \widetilde{S}_\xi
    -\frac{(\gamma-1)mu^2h}{2rc_1},
\end{split}
\end{align}
which can be obtained through direct calculations by \eqref{2.1} and \eqref{2.13}.

\subsection{The assumptions and main results}\label{S22}

Let $T_0$ be any positive time and $b_1<b_2$ be any two positive numbers. We consider the initial data $(\rho_0(r), u_0(r), S_0(r))=(\rho(r,0), u(r,0), S(r,0))$ of \eqref{1.1} on the interval $[b_1, b_2]$ satisfying $\rho_0(r), u_0(r)\in C^1$, $S_0(r)\in C^2$
and $\rho_0(r)>0$. Denote
\begin{align}\label{2.14}
\begin{split}
h_0(r)=&\sqrt{K\gamma} e^{\frac{S_0(r)}{2c_v}}(\rho_0(r))^\frac{\gamma-1}{2},\\
w_0(r)=&u_0(r)+\frac{2}{\gamma-1}h_0(r),\quad z_0(r)=u_0(r)-\frac{2}{\gamma-1}h_0(r),
\end{split}
\end{align}
and
\begin{align}\label{2.15}
\tilde{w}=\max_{r\in[b_1,b_2]}w_0(r),\quad \tilde{z}=\min_{r\in[b_1,b_2]}z_0(r).
\end{align}
Then it suggests that
\begin{align}\label{2.16}
u_0(r)<\tilde{w},\quad h_0(r)<\fr{\gamma-1}{4}\tilde{w}=: h_d,\quad \forall\ r\in[b_1,b_2].
\end{align}
We further set
\begin{align}\label{2.18}
r_d=b_2+\tilde{w}T_0,
\end{align}
and denote
\begin{align}\label{2.17}
\begin{split}
\alpha_0(r)=&w_{0}'(r)-\fr{h_0(r)[\gamma u_0(r)+h_0(r)]}{\gamma c_\gamma [u_0(r)+h_0(r)]}S_{0}'(r) +\fr{m u_0(r)h_0(r)}{r[u_0(r)+h_0(r)]}, \\
\beta_0(r)=&z_{0}'(r)+\fr{h_0(r)[\gamma u_0(r)-h_0(r)]}{\gamma c_\gamma [u_0(r)-h_0(r)]}S_{0}'(r) -\fr{m u_0(r)h_0(r)}{r[u_0(r)-h_0(r)]},
\end{split}
\quad r\in[b_1,b_2].
\end{align}

We now state the assumptions of the paper.
\begin{asu}\label{asu1}
Let $1<\gamma<3$. For the initial data $(\rho_0(r), u_0(r), S_0(r) )$ of \eqref{1.1} on $[b_1, b_2]$, we assume that
\begin{align}\label{2.19}
0<\tilde{z}<\tilde{w}\leq \mathcal{C}_0,
\end{align}
for a positive finite constant $\mathcal{C}_0$.
\end{asu}

\begin{asu}\label{asu2}
For the initial data $(\rho_0(r), u_0(r), S_0(r) )$ of \eqref{1.1} on $[b_1, b_2]$, we assume that
\begin{align}\label{2.20}
|S_0(r)|\leq \mathcal{S}_0,\qquad 0\leq \fr{S_{0}'(r)}{r\rho_0(r)}\leq \mathcal{S}_1,
\end{align}
where $\mathcal{S}_0$ and $\mathcal{S}_1$ are two positive constants satisfying
\begin{align}\label{2.21}
\mathcal{S}_1\leq \fr{mc_v\gamma}{3\gamma_k r_{d}^{m+1}}e^{-\fr{\mathcal{S}_0}{c_\gamma}}h_{d}^{-\fr{2}{\gamma-1}}.
\end{align}
\end{asu}

\begin{asu}\label{asu3}
For the initial data $(\rho_0(r), u_0(r), S_0(r) )$ of \eqref{1.1} on $[b_1, b_2]$, we assume that
\begin{align}\label{2.22}
S_{0}''(r)-\Big(\fr{m}{r}+\fr{\rho_{0}'(r)}{\rho_0(r)}\Big)S_{0}'(r)\leq0.
\end{align}
\end{asu}

\begin{asu}\label{asu4}
For the initial data $(\rho_0(r), u_0(r), S_0(r) )$ of \eqref{1.1} on $[b_1, b_2]$, we assume that
\begin{align}\label{2.23}
\alpha_0(r)\geq0,\qquad \beta_0(r)\geq0.
\end{align}
\end{asu}

In addition, we denote
\begin{align}\label{2.24}
\begin{split}
\mathcal{S}_2=&\max_{r\in[b_1,b_2]} \bigg|\fr{S_{0}''(r)}{r^{2m}\rho_{0}^2(r)} -\fr{1}{r^{2m}\rho_{0}^2(r)}\Big(\fr{m}{r}+\fr{\rho_{0}'(r)}{\rho_0(r)}\Big)S_{0}'(r)\bigg|, \\
\mathcal{C}_1=&\max\bigg\{\max_{r\in[b_1,b_2]}|\alpha_0(r)|,\ \  \max_{r\in[b_1,b_2]}|\beta_0(r)|\bigg\},
\end{split}
\end{align}
which are two finite positive constants by the regularity assumptions of $(\rho_0(r), u_0(r), S_0(r))$.

We next give some remarks on the above assumptions.
\begin{rem}\label{r1}
Thanks to \eqref{2.7}, the inequalities \eqref{2.20} and \eqref{2.21} mean that
\begin{align}\label{2.25}
0\leq \gamma_k r_{d}^{m+1}e^{\fr{\mathcal{S}_0}{c_\gamma}}h_{d}^{\fr{2}{\gamma-1}}\widetilde{S}_\xi(\xi,0)\leq \fr{mc_v\gamma}{3},
\end{align}
for $\xi\in[\xi_1, \xi_2]$, where
$$
\xi_1=\int_{0}^{b_1}\zeta^m\rho_0(\zeta)\ {\rm d}\zeta,\quad \xi_2=\int_{0}^{b_2}\zeta^m\rho_0(\zeta)\ {\rm d}\zeta.
$$
This together with \eqref{2.19} will lead to an invariant domain of $(w,z)$, that is
$$
0<z(r,t)<w(r,t)< \tilde{w},
$$
for any $(r,t)\in\Omega_d$. Here and below $\Omega_d$ represents the characteristic triangle or quadrangle domain generated by interval $[b_1,b_2]$, see Figure \ref{fig1} for illustration. Particularly, it follows by the inequality $z>0$ that
$$
c_1=u-h>\fr{2}{\gamma-1}h-h=\fr{3-\gamma}{\gamma-1}h>0,
$$
for $1<\gamma<3$, which indicates that the supersonic expanding property of the solution can be preserved in the domain $\Omega_d$.
\end{rem}

\begin{figure}[h!]
\centering
\scalebox{0.45}{\includegraphics{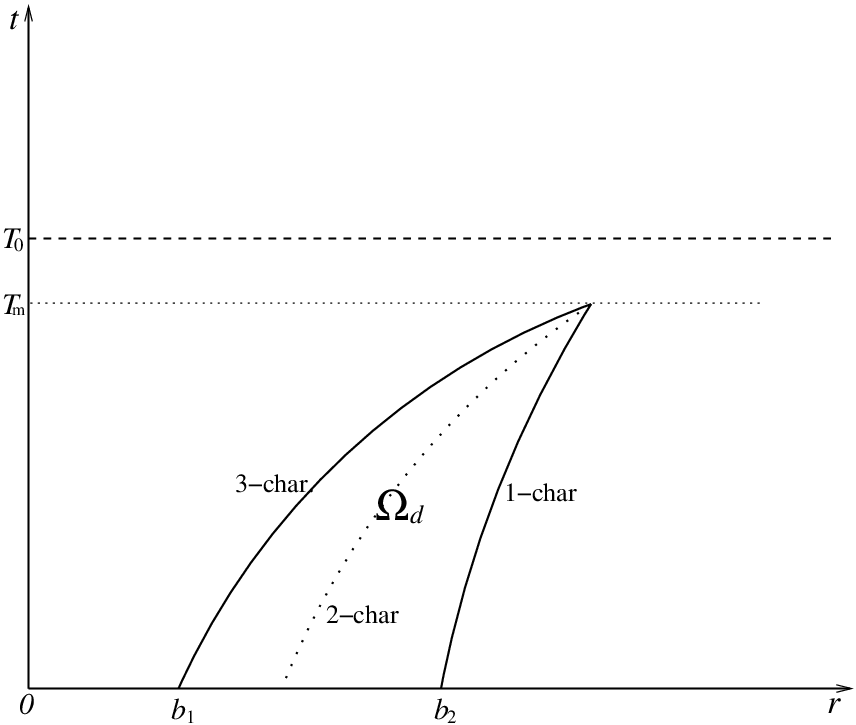}}
\quad \quad
\scalebox{0.45}{\includegraphics{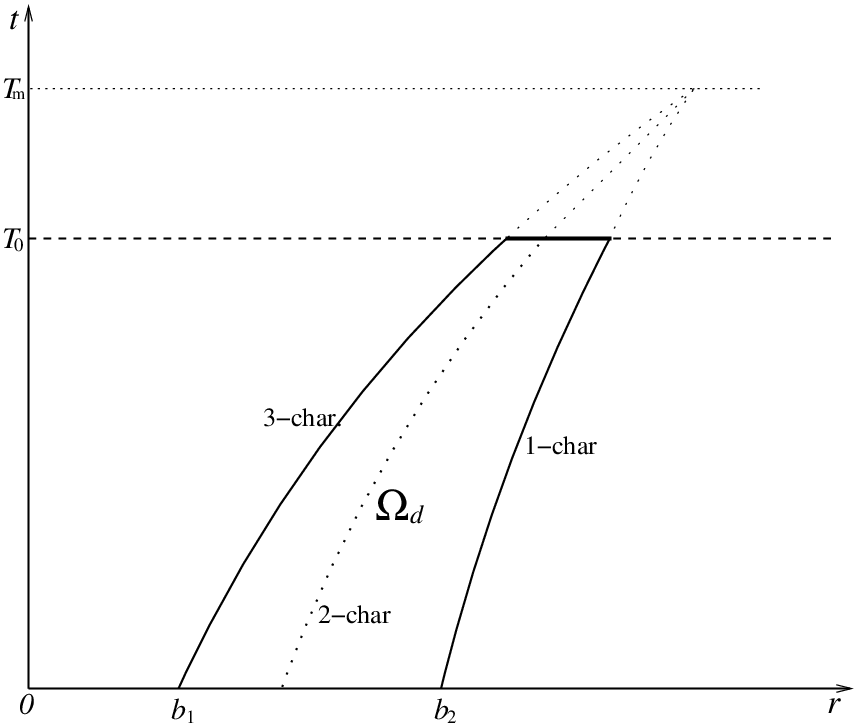}}
 \caption{The domain $\Omega_d$. When $T_0\geq T_m$ (Left), $\Omega_d$ is the characteristic triangle domain generated by interval $[b_1,b_2]$. When $T_0< T_m$ (Right), $\Omega_d$ is the characteristic quadrangle domain generated by interval $[b_1,b_2]$. Here $T_m$ is the intersection time of  the 3-characteristic originating from point $(b_1,0)$ and the 1-characteristic originating from  point $(b_2,0)$.
\label{fig1} }
\normalsize
\end{figure}

\begin{rem}\label{r2}
In view of \eqref{st}, \eqref{2.8} and \eqref{2.24}, the inequality \eqref{2.22} implies that on $[\xi_1,\xi_2]$
\begin{align}\label{2.26}
-\mathcal{S}_2\leq \widetilde{S}_{\xi\xi}(\xi,0)\leq0.
\end{align}
The initial properties of $\widetilde{S}_\xi$ and $\widetilde{S}_{\xi\xi}$ in \eqref{2.25} and \eqref{2.26} will be maintained in the domain $\Omega_d$ by the definition of $\xi$,
which is the main reason for introducing this Lagrangian coordinate variable.
\end{rem}
\begin{rem}\label{r3}
The inequality \eqref{2.23} will form an invariant domain of $(\alpha,\beta)$, that is
$$
\alpha(r,t)\geq0,\quad \beta(r,t)\geq0,\quad \forall\ (r,t)\in\Omega_d,
$$
which are used to establish the global existence of smooth solutions in $\Omega_d$.
\end{rem}

The main results of the paper can be stated as the following two theorems.
\begin{thm}\label{thm1}
Let Assumptions \ref{asu1}-\ref{asu4} be satisfied. Then, for the initial data $(\rho_0(r), u_0(r), S_0(r))$, the radially symmetric non-isentropic Euler equations \eqref{1.1} with \eqref{1.2} admits a smooth solution $(\rho, u, S)(r,t)$ on the entire domain $\Omega_d$. Furthermore, the solution fulfils
\begin{align}\label{2.27}
\begin{array}{c}
0<z(r,t)<w(r,t)<\tilde{w}, \quad |S(r,t)|\leq \mathcal{S}_0,\\
\dps \min_{\Omega_d}(\alpha,\beta)(r,t)\geq0, \quad \max_{\Omega_d}(\alpha,\beta)(r,t)< \widetilde{\mathcal{C}}_1,
\end{array}
\quad \forall\ (r,t)\in\Omega_d,
\end{align}
for some positive constant $\widetilde{\mathcal{C}}_1> \mathcal{C}_1$.
\end{thm}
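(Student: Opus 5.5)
The proof is a continuation argument: local existence of a $C^1$ solution together with uniform a priori bounds on $\Omega_d$ for $(w,z,S)$ and for the gradient variables $(\alpha,\beta)$. Once we know that $h$ stays away from zero, that $c_1c_3>0$, and that $S_r,S_{rr}$ are bounded, these bounds give bounds on $(u_r,h_r)$ through \eqref{2.13}. The standard local existence theory for strictly hyperbolic systems, applied repeatedly, then extends the classical solution to all of $\Omega_d$, since the domain of determinacy is exactly the characteristic triangle or quadrangle. So the whole task is to prove the estimates \eqref{2.27}, plus a positive lower bound for $h$, under a bootstrap hypothesis that the solution is $C^1$ on $\Omega_d\cap\{t\le T\}$.

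\textbf{Step 1: entropy.} Since $S$ is constant along particle paths, $\widetilde S(\xi,t)=\widetilde S(\xi,0)$ in Lagrangian coordinates. Hence $|S|\le\mathcal S_0$, the bound \eqref{2.25} on $\widetilde S_\xi$ persists, and $-\mathcal S_2\le\widetilde S_{\xi\xi}\le 0$ persists. This is Remark~\ref{r2}.

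\textbf{Step 2: invariant domain for $(w,z)$.} We use \eqref{2.9}. Write the bracket in $\partial_3 w$ as $(w-z)\big[A(w-z)-(w+z)\big]$ with
\[
A=\tfrac{\gamma_k}{2mc_v\gamma}r^{m+1}e^{-S/c_\gamma}h^{2/(\gamma-1)}\widetilde S_\xi .
\]
As long as $r\le r_d$, $h<h_d$ and $|S|\le\mathcal S_0$, \eqref{2.25} gives $0\le A\le 1/6$. Then $A(w-z)\le (w-z)/6<w+z$ whenever $z>0$, so $\partial_3 w<0$ and $w$ stays below $\tilde w$. On the other hand $\partial_1 z>0$, so $z\ge\tilde z>0$. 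Closing this argument by contradiction at a first touching time gives $0<\tilde z\le z<w<\tilde w$. It also gives $h<h_d$, $r\le b_2+\tilde w t\le r_d$, and $c_1\ge\frac{3-\gamma}{\gamma-1}h>0$, as in Remark~\ref{r1}.

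\textbf{Step 3: lower bound for $h$.} The decay of $w$ and growth of $z$ give $w-z\ge$ const only if more is known, so instead we integrate $\partial_2 h$ from \eqref{a4}. This is a linear equation in $h$ whose coefficients involve $\alpha+\beta$ (bounded from Step 4 under the bootstrap), together with bounded terms involving $m/r\ge m/r_d$ and $S$-terms, each of which carries an extra factor of $h$. This gives $h\ge h_0 e^{-Ct}>0$ on the finite time interval. The structural fact noted in the introduction, that every entropy coefficient is multiplied by the sound speed, is exactly what makes this linear in $h$.

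\textbf{Step 4: invariant domain for $(\alpha,\beta)$ — the main obstacle.} This relies on the Riccati equations of Section~\ref{S3}, of the form
\[
\partial_3\alpha=-k_1\alpha^2+(\ldots)\alpha\beta+a_1\alpha+F_1,\qquad \partial_1\beta=-k_2\beta^2+\ldots+F_2,
\]
where the inhomogeneities $F_i$ come from $\widetilde S_\xi$ and $\widetilde S_{\xi\xi}$ (the $S_{rr}$ term is rewritten via \eqref{2.8}). For nonnegativity, I would show that on $\{\alpha=0\}$ the sign of $F_1$ makes $\partial_3\alpha\ge0$, and similarly for $\beta$. The ingredients for this sign are $\widetilde S_\xi\ge0$, $\widetilde S_{\xi\xi}\le0$ (Assumption~\ref{asu3}), $c_1>0$, and the smallness $A\le1/6$; a comparison/first-touching argument then gives $\alpha,\beta\ge0$. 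Checking this sign is where I expect the hardest computation, because cross terms in $\beta$ appear with $\alpha=0$ and must also be nonnegative given $\beta\ge0$. If direct sign checking fails, I would work with the weighted variables $h^{-\kappa}\alpha$ for a suitable $\kappa$ to absorb the linear terms.

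\textbf{Step 5: upper bound.} With $\alpha,\beta\ge0$, the quadratic term $-k_1\alpha^2$ has a good sign ($k_1>0$ since $h>0$). The remaining terms are bounded by $C(1+\alpha)$ on the bounded domain, so a Riccati comparison yields $\alpha,\beta<\widetilde{\mathcal C}_1$ on the finite domain $\Omega_d$. Together with Steps 1–3, this closes the bootstrap, controls the $C^1$ norm, and completes the continuation.
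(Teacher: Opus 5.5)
Your outline follows the paper's route: transport of $\widetilde S_\xi,\widetilde S_{\xi\xi}$ in Lagrangian coordinates, the $(w,z)$ invariant region via $A\le\frac16$, an invariant region and upper bound for $(\alpha,\beta)$, a lower bound on $h$, and continuation by local existence. Steps 1--3 are sound. Integrating $\partial_2 h$ instead of the paper's $\partial_3\big(h^{\frac{2}{\gamma-1}}\big)$ is an equally good variant. Note, however, that the geometric coefficient is controlled by $m/r\le m/b_1$, not by $m/r\ge m/r_d$; this holds because the left boundary of $\Omega_d$ is a 3-characteristic moving right. The two steps that carry the proof, however, do not close as written.

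In Step 4 you defer exactly the content of Lemma~\ref{lem4}, and your fallback cannot replace it. For any positive weight $\phi$ (e.g.\ $\phi=h^{-\kappa}$) one has $\partial_3(\phi\alpha)=\phi\,\partial_3\alpha$ on $\{\alpha=0\}$. So reweighting never changes the sign of $A_2\beta+A_3$ there; it only alters coefficients of terms carrying a factor $\alpha$, and those vanish on $\{\alpha=0\}$ anyway. You therefore have to prove $A_2,B_2,A_3,B_3\ge0$:
\begin{itemize}
\item $A_2>0$ is immediate from $z>0$ (which gives $\frac{\gamma-1}{2}u^2>h^2$) and $\widetilde S_\xi\ge0$.
\item $A_3,B_3\ge0$ follow from $0\le r^{m+1}\rho\widetilde S_\xi\le\frac13 mc_v\gamma$ (equivalent to your $0\le A\le\frac16$) and $\widetilde S_{\xi\xi}\le0$.
\item $B_2$ is the delicate one. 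Its entropy part $-\frac{h[3\gamma u^2-(\gamma+3)uh-3h^2]}{4\gamma c_vc_1^2}r^m\rho\widetilde S_\xi$ is negative and must be dominated by the geometric part $\frac{mc_3}{2rc_1^2}\big(\frac{\gamma-1}{2}u^2-h^2\big)$. This uses the factor $\frac13$ in \eqref{2.21}, $r<r_d$, $u>\frac{2}{\gamma-1}h$, and $12-(\gamma+3)(\gamma-1)>0$, i.e.\ $\gamma<3$; see \eqref{4.15}.
\end{itemize}
Also, $A_3,B_3$ vanish wherever $\widetilde S_\xi=\widetilde S_{\xi\xi}=0$, so you only have non-strict signs. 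The first-touching argument should then be run on $\alpha+\epsilon e^{Kt}$ and $\beta+\epsilon e^{Kt}$, with $K$ large compared with the bounds on $A_1,A_2,B_1,B_2$, followed by $\epsilon\to0$.

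In Step 5, the claim that the remaining terms are $O(1+\alpha)$ is false. The term $A_2\beta$ involves $\beta$, which has no independent bound, so a single-equation Riccati comparison along 3-characteristics does not close. You need a joint argument at the first point where $\max\{\alpha,\beta\}$ reaches a level $K$. If there $\alpha=K\ge\beta\ge0$, then $-\frac{3-\gamma}{4}\alpha\beta\le0$ and $-A_1\alpha+A_2\beta=-(A_1-A_2)K-A_2(K-\beta)\le0$. Hence $\partial_3\alpha\le-\frac{\gamma+1}{4}K^2+A_3<0$ once $K^2>\frac{4}{\gamma+1}\sup A_3$, which contradicts $\partial_3\alpha\ge0$ at a first touching; the case of $\beta$ is symmetric. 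This is the paper's rewriting \eqref{4.21}. It uses $A_1-A_2>0$ and $B_1-B_2>0$ from Lemma~\ref{lem4}, together with the bound $A_3,B_3\le L$ of \eqref{a3}. Alternatively, bounds on $|A_1|$ and $A_2$ suffice if $K$ is taken larger.
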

\begin{rem}\label{r-b}
The main reason for the global existence result being confined to a finite domain $\Omega_d$ is that it relies heavily on the inequality \eqref{2.25}, see the proof process of Lemmas \ref{lem3} and \ref{lem4} below. The key application of this inequality is to derive a uniform estimate for the term $r^{m+1}\widetilde{S}_\xi$ (i.e. $rS_r$) independent of the spatial variable $r$, which is used to ensure the supersonic expanding property and the rarefactive property of the smooth solution.
\end{rem}

\begin{thm}\label{thm2}
Let Assumptions \ref{asu1} and \ref{asu2} hold. For any $T\in(0, T_0]$, there exists a constant $\mathcal{N}(T)$, depending on $T$, such that,  if
\begin{align}
\alpha_0(r^*)\leq -\mathcal{N}(T),\quad {\rm or}\quad  \beta_0(r^*)\leq -\mathcal{N}(T), \notag
\end{align}
for some number $r^*\in(b_1,b_2)$, then singularity forms before time $T$.
\end{thm}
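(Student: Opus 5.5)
We argue by contradiction. Suppose the $C^1$ solution exists on $\Omega_d\cap\{t\le T\}$, and focus on the case $\alpha_0(r^*)\le-\mathcal{N}(T)$; the $\beta$ case is symmetric, using 1-characteristics instead of 3-characteristics.

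The first step collects the a priori bounds that hold without Assumptions \ref{asu3} and \ref{asu4}. Under Assumptions \ref{asu1} and \ref{asu2} alone, the invariant domain $0<z<w<\tilde w$ of Remark \ref{r1} holds, so $c_1>0$ and the wave stays supersonic. Since $S_t+uS_r=0$ and $\widetilde S_t=0$ in the Lagrangian variable $\xi$, we also have $|S|\le\mathcal S_0$ and the bound \eqref{2.25} on $\widetilde S_\xi$ throughout. For the density we want a positive lower bound $h\ge h_*(T)>0$ on $t\le T$. The plan is to integrate $\partial_2 h$ from \eqref{a4} along particle paths, or alternatively to use a weighted quantity built from $\alpha$ and $\beta$. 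Here one uses the key structural fact stated in the introduction: every entropy coefficient carries a factor of $h$, so $\log h$ satisfies a linear equation whose coefficients are controlled by quantities we already bound. Concretely, I would aim to show that a weighted gradient variable such as $h^{-\frac{3-\gamma}{2(\gamma-1)}}\alpha$ (the analogue of the weights used in \cite{CPZ,G9}) has a one-sided bound. Inserted into the $\partial_2 h$ equation, this bound controls $h$ from below with a time-dependent rate. This matches the ``special weighted estimates of $(\alpha,\beta)$'' announced for Section \ref{S4}.

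The second step is the Riccati inequality. From Section \ref{S3}, along the 3-characteristic, $\alpha$ satisfies an equation of the form
\[
\partial_3\alpha = -a_2\,\alpha^2 + a_1\,\alpha\beta + a_1'\,\alpha + a_0 ,
\]
where $a_2$ is a positive multiple of $h^{-1}$ times bounded factors, or after the weighting, a positive power of $h$. The coefficients $a_1,a_1',a_0$ involve $\widetilde S_\xi$, $\widetilde S_{\xi\xi}$, $m/r$ and $\beta$. I would remove the $\beta$ dependence by the standard change of variable $\bar\alpha=\alpha\,e^{\int\cdots}$ or $\alpha+g(w,z,S)$, exploiting $\partial_3$ of a suitable function of $h$, as in \cite{CYZ}.

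Note that $\widetilde S_{\xi\xi}$ is not sign-controlled without Assumption \ref{asu3}. It is, however, bounded by $\mathcal S_2$, since it is constant along particle paths. Together with $r\ge b_1$, $|u|,h\le\tilde w$ and $c_1,c_3$ bounded below (by the supersonic bound $c_1\ge\frac{3-\gamma}{\gamma-1}h\ge c\,h_*$), this gives
\[
\partial_3\bar\alpha\le -A(T)\,\bar\alpha^2 + B(T)
\]
for constants $A(T)>0$ and $B(T)$ depending on $T$.

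The third step is the comparison. If $\bar\alpha(r^*,0)<-\max\{\sqrt{B/A},\,2/(AT)+\sqrt{B/A}\}$, then the ODE comparison forces $\bar\alpha\to-\infty$ before time $T$ along the 3-characteristic from $(r^*,0)$, contradicting the assumed $C^1$ regularity. Translating back through the bounded weight defines $\mathcal N(T)$. The only catch is that this characteristic must stay in $\Omega_d$ up to the blowup time. So we take $\mathcal N$ large enough that blowup happens before the characteristic exits, i.e. within a time proportional to $1/|\alpha_0(r^*)|$, which can be made smaller than the exit time because $r^*$ lies in the open interval $(b_1,b_2)$. If the bound is required to be uniform in $r^*$, the exit time must also be handled, for example by enlarging $\mathcal N$ in terms of the distance from $r^*$ to the endpoints. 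Whether that dependence is permitted needs checking against the statement.

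The main obstacle is the first step: the density lower bound $h_*(T)>0$ without any rarefaction assumption. The Riccati coefficient $a_2$ degenerates as $h\to0$ when $1<\gamma<3$, so the whole argument rests on this bound. I would try to mimic \cite{G9,CPZ}: find weighted variables $y=h^{-\theta}\alpha$ and $q=h^{-\theta}\beta$ whose Riccati equations have the quadratic term with a favorable sign and entropy terms multiplied by $h$. This gives an upper bound on $\max(y,q)$ growing at most linearly in $t$, which then yields $h\ge h_*(T)$ through $\partial_2 h$.
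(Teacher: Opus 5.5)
Your outline has the same architecture as the paper: an invariant region for $(w,z)$, a one-sided weighted bound on $(\alpha,\beta)$, a lower bound on $h$ along characteristics, and then the decoupled equation for $h^{-\frac{3-\gamma}{2(\gamma-1)}}\alpha$ with an ODE comparison. However, the step you call the main obstacle is only a plan, and the idea that makes it close is missing. The difficulty is not the sign of the quadratic term. It is the coupling to the other family, because without Assumption~\ref{asu4} the other variable may be very negative. By Lemma~\ref{lem6},
\begin{align*}
\partial_3(h^{-\lambda}\alpha)=&-\tfrac{\gamma+1}{4}h^{\lambda}(h^{-\lambda}\alpha)^2+\Big(\tfrac{\gamma-1}{2}\lambda-\tfrac{3-\gamma}{4}\Big)h^{\lambda}(h^{-\lambda}\alpha)(h^{-\lambda}\beta)\\
&+(D_\lambda-A_1)(h^{-\lambda}\alpha)+A_2(h^{-\lambda}\beta)+h^{-\lambda}A_3,
\end{align*}
with $D_\lambda$ bounded. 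A maximum-principle argument for $\max(h^{-\lambda}\alpha,h^{-\lambda}\beta)$ closes only if two conditions hold. First, (i) the cross coefficient must lie in $[0,\frac{\gamma+1}{4}]$; the paper takes $\lambda=\frac{2}{\gamma-1}$, which turns the quadratic part into $\frac{\gamma+1}{4}h^{\lambda}\hat\alpha(\hat\beta-\hat\alpha)$. Second, (ii) the coupling coefficients must have the right sign, $A_2,B_2>0$; the paper also uses $A_1-A_2>0$ and $B_1-B_2>0$. Point (ii) is the content of Lemma~\ref{lem4}, and it uses only $z>0$ and the smallness condition \eqref{2.21} via \eqref{a2}, not Assumption~\ref{asu3}. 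The remaining bounded linear terms are then absorbed by a factor $e^{-Mt}$, so the bound grows exponentially in $t$, not linearly. The source term $h^{-\lambda}A_3$ is bounded above using $\rho\propto h^{\frac{2}{\gamma-1}}$ and only $|\widetilde S_{\xi\xi}|\le\mathcal S_2$. Once (ii) is known, your weight $\frac{3-\gamma}{2(\gamma-1)}$ would also work, so a single weight could serve both steps. Without (ii) there is no density lower bound, and everything else rests on it.

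The same point is missing from your second step. The weight cancels only the $\alpha\beta$ term; the linear term $A_2\tilde\beta$ in \eqref{5.5} survives, and no change of variable removes it. The bounds you list for that step ($\mathcal S_2$, $r\ge b_1$, $u,h\le\tilde w$, $c_1\gtrsim h$) do not control it. You need $A_2\ge0$ together with an upper bound on $\tilde\beta$, which the paper takes from \eqref{a7} and \eqref{5.9}. Also, the quadratic coefficient is not a multiple of $h^{-1}$. It is $\frac{\gamma+1}{4}$ before weighting and $\frac{\gamma+1}{4}h^{\frac{3-\gamma}{2(\gamma-1)}}$ after, and the latter degenerates as $h\to0$; that is why $\underline h$ is needed.

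Your closing caveat, by contrast, is well founded, and the paper does not address it. The paper follows $r_3(t;r^*,0)$ with $\mathcal N$ depending only on $T$. Since $c_3-c_1=2h\ge2\underline h$, this characteristic leaves $\Omega_d$ no later than time $(b_2-r^*)/(2\underline h)$, after which none of the a priori estimates applies. To close the argument you genuinely need either to let $\mathcal N$ depend also on $b_2-r^*$ (resp.\ $r^*-b_1$ in the $\beta$ case), or to restrict $r^*$ to a compact subinterval.
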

\begin{rem}\label{r4}
The selection of the constant $\mathcal{N}(T)$ will be determined along with the analysis of the singularity in Section \ref{S52}, see \eqref{5.15} below.
\end{rem}

\section{The Riccati equations of $(\alpha,\beta)$}\label{S3}

In this section, we derive the Riccati-type equations for the gradient variables $(\alpha,\beta)$, which plays a crucial role in the analysis of the entire paper

\begin{lem}\label{lem1}
For smooth solution of \eqref{1.1}, \eqref{1.2}, the variables $(\alpha, \beta)$ defined in \eqref{2.12} satisfy the following Riccati-type equations when $r>0$ and $c_1c_2\neq 0$
\begin{align}\label{3.1}
\begin{split}
\pa_1 \beta =& -\frac{\gamma+1}{4} \beta^{2}
  -\frac{3-\gamma}{4} \alpha \beta-B_{1}\beta+ B_{2} \alpha +B_3, \\[4pt]
\pa_3 \alpha =& -\frac{\gamma+1}{4} \alpha^{2}
  -\frac{3-\gamma}{4} \alpha \beta-A_{1}\alpha+ A_{2} \beta +A_3,
\end{split}
\end{align}
where
\begin{align}\label{3.2}
\begin{split}
B_{1} &=\fr{mc_3}{2rc_{1}^2}\Big(\fr{\gamma-1}{2}u^2-h^2\Big) +\fr{(3-\gamma)mu^2h^2}{rc_{1}^2c_3} -\fr{h[3\gamma u^2+(\gamma+5)uh-h^2]}{4\gamma c_v c_1c_3}r^m\rho \widetilde{S}_\xi,   \\
B_{2} &=\fr{mc_3}{2rc_{1}^2}\Big(\fr{\gamma-1}{2}u^2-h^2\Big) -\fr{h[3\gamma u^2-(\gamma+3)uh-3h^2]}{4\gamma c_v c_{1}^2}r^m\rho \widetilde{S}_\xi,    \\
B_{3} &=\fr{u^3h^2}{\gamma c_{v}^2 r c_{1}^2 c_{3}}\Big(mc_v\gamma -r^{m+1}\rho \widetilde{S}_\xi\Big) r^m\rho \widetilde{S}_\xi -\fr{uh^2}{\gamma c_v c_1}r^{2m}\rho^2\widetilde{S}_{\xi\xi},
\end{split}
\end{align}
and
\begin{align}\label{3.3}
\begin{split}
A_{1} &=\fr{mc_1}{2rc_{3}^2}\Big(\fr{\gamma-1}{2}u^2-h^2\Big) +\fr{(3-\gamma)mu^2h^2}{rc_{1}c_{3}^2} +\fr{h[3\gamma u^2-(\gamma+5)uh-h^2]}{4\gamma c_v c_1c_3}r^m\rho \widetilde{S}_\xi,   \\
A_{2} &=\fr{mc_1}{2rc_{3}^2}\Big(\fr{\gamma-1}{2}u^2-h^2\Big) +\fr{h[3\gamma u^2+(\gamma+3)uh-3h^2]}{4\gamma c_v c_{3}^2}r^m\rho \widetilde{S}_\xi,    \\
A_{3} &=\fr{u^3h^2}{\gamma c_{v}^2 rc_{1}c_{3}^2}\Big(mc_v\gamma -r^{m+1}\rho \widetilde{S}_\xi\Big) r^m\rho \widetilde{S}_\xi -\fr{uh^2}{\gamma c_v c_3}r^{2m}\rho^2\widetilde{S}_{\xi\xi}.
\end{split}
\end{align}
with $\widetilde{S}$ defined in \eqref{st}.
\end{lem}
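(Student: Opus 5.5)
The plan is to derive the equation for $\beta$ directly and then get the one for $\alpha$ by the symmetry $h\mapsto -h$. Under this symmetry $c_1\leftrightarrow c_3$ and $w\leftrightarrow z$, and $\alpha\leftrightarrow\beta$ by \eqref{2.12}. Comparing \eqref{3.2} with \eqref{3.3} confirms this: every $A_i$ is obtained from $B_i$ by replacing $h$ with $-h$. So I only compute $\pa_1\beta$.

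\textbf{Setting up the equation for $\pa_1\beta$.} I start from $\beta=z_r+G\,S_r-\fr{muh}{rc_1}$ with $G=\fr{h(\gamma u-h)}{\gamma c_\gamma c_1}$. I differentiate the second equation of \eqref{2.5} in $r$, using the commutator $\pa_r\pa_1=\pa_1\pa_r+(c_1)_r\pa_r$. This gives
\[
\pa_1 z_r=-(c_1)_r z_r+\pa_r\Big[\tfrac{\gamma-1}{16c_v\gamma}(w-z)^2S_r+\tfrac{m(\gamma-1)}{8r}(w^2-z^2)\Big].
\]
For the entropy part, the third equation of \eqref{2.1} gives $\pa_2 S=0$, hence $\pa_1 S=-hS_r$ and $\pa_1 S_r=-u_rS_r-h_rS_r-hS_{rr}$. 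I then use \eqref{2.8} to replace $S_{rr}$ by $\widetilde S_\xi,\widetilde S_{\xi\xi}$. In that substitution the $h_r$ inside \eqref{2.8} must also be kept and eliminated later.

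Next I compute $\pa_1$ of the coefficients $G$ and $\fr{muh}{rc_1}$. For this I need $\pa_1 h$ from \eqref{a4}, $\pa_1 r=c_1$, and $\pa_1 u$. The last comes from the $u$-equation of \eqref{2.1}:
\[
\pa_1 u=-hu_r-\fr{2}{\gamma-1}hh_r+\fr{h^2}{\gamma c_\gamma}S_r .
\]
All remaining first derivatives $w_r,z_r,u_r,h_r$ are then expressed through $\alpha,\beta,S_r$ and the geometric term via \eqref{2.12}–\eqref{2.13}, and $S_r$ is replaced by $r^m\rho\widetilde S_\xi$ via \eqref{2.7}.

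\textbf{Identifying the coefficients.} After these substitutions the right-hand side is a polynomial in $\alpha,\beta$ of degree at most two. I identify its coefficients in this order.
\begin{itemize}
\item The quadratic part should reproduce $-\fr{\gamma+1}{4}\beta^2-\fr{3-\gamma}{4}\alpha\beta$, exactly as in the 1-d case, with no $\alpha^2$ term. As a check, it comes from $-(c_1)_rz_r$ and from the nonlinear part of $\pa_r$ of the source.
\item The coefficient of $\beta$ gives $-B_1$ and the coefficient of $\alpha$ gives $B_2$.
\item The remaining terms give $B_3$.
\end{itemize}

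\textbf{Main obstacle.} The hard step is the algebraic simplification of $B_3$, together with the entropy parts of $B_1$ and $B_2$. The purely geometric $\alpha,\beta$-free pieces must cancel completely, including the $m^2/r^2$ terms coming from differentiating $\fr{muh}{rc_1}$ and from the $h_r$ in \eqref{2.13}. This cancellation is expected because $r^m\rho u$ is constant on stationary isentropic solutions, which are exactly the states with $\alpha=\beta=0$, $\widetilde S_\xi=0$. To organize the computation I would group terms by powers of $\widetilde S_\xi$:
\begin{itemize}
\item the $(\widetilde S_\xi)^0$ terms cancel;
\item the terms linear in $\widetilde S_\xi$ combine with factor $m/r$;
\item the $(r^m\rho\widetilde S_\xi)^2$ terms, which arise from the $-\fr{1}{c_\gamma}S_r$ piece of $\rho_r/\rho$ in \eqref{2.8} and from $S_r h_r$, should combine with the linear terms into $\fr{u^3h^2}{\gamma c_v^2rc_1^2c_3}\big(mc_v\gamma-r^{m+1}\rho\widetilde S_\xi\big)r^m\rho\widetilde S_\xi$;
\item the only surviving second-derivative term is $-\fr{uh^2}{\gamma c_vc_1}r^{2m}\rho^2\widetilde S_{\xi\xi}$, coming from $-hS_{rr}$ times the coefficient.
\end{itemize}
Throughout I would use $\gamma c_\gamma=\gamma(\gamma-1)c_v$ and $c_1c_3=u^2-h^2$ to clear denominators. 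A useful check is to verify each group separately in the isentropic case against the homogeneous equations of \cite{CEHS1}.
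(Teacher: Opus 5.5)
Your outline is workable, but one identity you feed into it is wrong, and carried through it would give the wrong coefficients. From $S_t=-uS_r$ you get $S_{rt}=-u_rS_r-uS_{rr}$, so $\partial_1S_r=S_{rt}+(u-h)S_{rr}=-u_rS_r-hS_{rr}$. Equivalently, the commutator you yourself use gives $\partial_1S_r=\partial_r(-hS_r)-(u_r-h_r)S_r$, in which the two $h_rS_r$ terms cancel.

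Your version $-u_rS_r-h_rS_r-hS_{rr}$ therefore contains a spurious $-h_rS_r$. In $\partial_1(GS_r)$, with $G=\frac{h(\gamma u-h)}{\gamma c_\gamma c_1}$, this becomes an extra term $-Gh_rS_r$. Since $h_r=\frac{\gamma-1}{4}(\alpha-\beta)+\cdots$ by \eqref{2.13}, it shifts the coefficient of $\alpha S_r$ by $-\frac{h(\gamma u-h)}{4\gamma c_vc_1}$ and that of $\beta S_r$ by $+\frac{h(\gamma u-h)}{4\gamma c_vc_1}$. It also adds further $S_r^2$ and $mS_r/r$ terms. So you would not arrive at the entropy parts of $B_1,B_2$, nor at $B_3$.

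With the correct identity the bookkeeping closes. For example, the $\alpha S_r$ terms are:
\begin{itemize}
\item $-\frac{h(\gamma u-h)}{4\gamma c_vc_1}$, from $-(c_1)_rz_r$ and $-Gu_rS_r$;
\item $-\frac{h^2}{2\gamma c_vc_1}$, from $\frac{2hh_r}{\gamma c_\gamma}S_r$ and the $h_r/h$ part of $S_{rr}$;
\item $\frac{h(h^2+2uh-\gamma u^2)}{2\gamma c_vc_1^2}$, from $(\partial_1G)S_r$.
\end{itemize}
These sum exactly to $-\frac{h[3\gamma u^2-(\gamma+3)uh-3h^2]}{4\gamma c_vc_1^2}$, the entropy part of $B_2$.

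Once this is fixed, your route is a legitimate and somewhat leaner alternative to the paper's. The paper expands $\partial_1\beta$ by brute force into $I_1,\dots,I_6$, where $I_1+I_2$ is exactly your $\partial_1z_r$. It then repeats the whole computation for $\partial_3\alpha$ via $J_3,\dots,J_{14}$. Your commutator form organizes the same terms, and the symmetry $h\mapsto-h$ removes the second computation entirely.

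You should, however, say why the symmetry is allowed:
\begin{itemize}
\item \eqref{2.1} is invariant under it (the first equation only changes sign);
\item $\rho$ enters only through $S_r=r^m\rho\widetilde S_\xi$ and, via \eqref{2.8}, through $\rho_r/\rho=\frac{2}{\gamma-1}\frac{h_r}{h}-\frac{S_r}{c_\gamma}$, which is even in $h$;
\item no step uses $h>0$.
\end{itemize}
The $\beta$-identity then maps to the $\alpha$-identity, consistent with \eqref{3.3} being \eqref{3.2} with $h\mapsto-h$.

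Two of your descriptive remarks are also slightly off, though harmless. First, the quadratic terms come only from $-(c_1)_rz_r$, since the $r$-derivative of the source in \eqref{2.5} is linear in $u_r,h_r$. Second, the $\widetilde S_{\xi\xi}$ coefficient $-\frac{uh^2}{\gamma c_vc_1}$ comes from $\frac{h^2}{\gamma c_\gamma}S_{rr}$ (the differentiated source) together with $-GhS_{rr}$, not from the latter alone.
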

\begin{proof}
The derivation process is quite cumbersome, we will proceed step by step.

\subsection*{I. Derivation of the $\beta$-equation}
From the definition of $\beta$ in \eqref{2.11} and equation \eqref{2.2}, we obtain the expression for $\beta$
\begin{align}\label{3.4}
\partial_1\beta &= \partial_t\beta + c_1\partial_r \beta  =\left(u_r-\frac{2}{\gamma-1}h_r+\frac{h(\gamma u-h)}{\gamma c_\gamma c_1}S_r-\frac{m uh}{rc_1}\right)_t \nonumber\\
    &\quad +c_1\left(u_r-\frac{2}{\gamma-1}h_r+\frac{h(\gamma u-h)}{\gamma c_\gamma c_1}S_r-\frac{m uh}{rc_1}\right)_r
    \nonumber\\
    &=\underbrace{-\Big(\frac{2}{\gamma-1}h_{rt}+\frac{2}{\gamma-1}uh_{rr}+hu_{rr}\Big)}_{I_1}
    +\underbrace{\Big(u_{rt}+\frac{2}{\gamma-1}hh_{rr}+uu_{rr}\Big)}_{I_2}
    \nonumber\\
    &\quad +\underbrace{\frac{1}{\gamma c_\gamma}\Big(\frac{\gamma u-h}{c_1}h_tS_r+\frac{h}{c_1}(\gamma u_t-h_t)S_r-\frac{h(\gamma u-h)}{c_1^2}(u_t-h_t)S_r+\frac{h(\gamma u-h)}{c_1}S_{rt}\Big)}_{I_3}
    \nonumber\\
    &\quad \underbrace{-\frac{m}{r}\Big(\frac{h}{c_1}u_t+\frac{u}{c_1}h_t-\frac{uh}{c_1^2}(u_t-h_t)\Big)}_{I_4}
    \nonumber\\
    &\quad +\underbrace{\frac{1}{\gamma c_\gamma}\Big((\gamma u-h)h_rS_r+h(\gamma u_r-h_r)S_r-\frac{h(\gamma u-h)}{c_1}(u_r-h_r)S_r+h(\gamma u-h)S_{rr}\Big)}_{I_5}
    \nonumber\\
    &\quad \underbrace{-\frac{m}{r}\Big(hu_r+uh_r-\frac{uh}{c_1}(u_r-h_r)-\frac{uh}{r}\Big)}_{I_6}.
\end{align}

\noindent\textbf{Computation of $I_1$.}
Making use of \eqref{2.1} and \eqref{2.13}, we calculate
\begin{align}
I_1 &=\frac{\gamma+1}{\gamma-1}u_rh_r+\frac{m}{r}\left(hu_r+uh_r-\frac{uh}{r}\right)\nonumber\\
&=\frac{\gamma+1}{4}\left( \frac12(\alpha+\beta)-\frac{uh^2}{\gamma c_v c_1c_3}S_r +\frac{muh^2}{r c_1 c_3} \right) \left( \alpha-\beta+\frac{2h(\gamma u^2-h^2)}{\gamma c_\gamma c_1c_3}S_r-\frac{2m u^2h}{r c_1 c_3} \right) \nonumber\\
&\quad + \frac{mh}{2r}(\alpha+\beta)-\frac{muh^3}{\gamma c_v c_1c_3 r}S_r + \frac{m^2 u h^3}{r^2 c_1 c_3} \nonumber\\
&\quad+ \frac{m(\gamma-1)u}{4r} \left( \alpha-\beta+\frac{2h(\gamma u^2-h^2)}{\gamma c_\gamma c_1c_3}S_r-\frac{2m u^2h}{r c_1 c_3} \right) \nonumber\\
&=\frac{\gamma + 1}{8}\alpha^2 + \frac{(\gamma + 1)h(\gamma u + h)}{4\gamma c_{\gamma}  c_3}\alpha S_r + \frac{m\left[ (\gamma - 1)u^2 +2 h^2 \right]}{4r c_3}\alpha - \frac{\gamma + 1}{8}\beta^2
    \nonumber\\
&\quad+ \frac{(\gamma + 1)h(\gamma u - h)}{4\gamma c_{\gamma}  c_1}\beta S_r - \frac{m\left[ (\gamma - 1)u^2 +2 h^2 \right]}{4r c_1}\beta - \frac{(\gamma + 1)uh^3(\gamma u^2 - h^2)}{2 \gamma^2 c_v c_{\gamma} c_1^2 c_3^2}{S_r}^2 \nonumber\\
&\quad+ \frac{muh\left( (\gamma - 1)u^2 +2 h^2 \right)((\gamma - 2)h^2 + \gamma u^2)}{2 \gamma c_{\gamma} r c_1^2 c_3^2}S_r -\frac{m^2 uh \left( (\gamma - 1)u^4 + 2h^4 \right)}{2r^2 c_1^2 c_3^2} - \frac{muh}{r^2}. \notag
\end{align}

\noindent\textbf{Computation of $I_2$.}
In view of \eqref{2.1}, \eqref{2.8} and \eqref{2.13}, one obtains
\begin{align}
I_2 &=-\frac{2}{\gamma-1}h_r^2-u_r^2+\frac{2h}{c_v\gamma(\gamma-1)}h_rS_r
  +\frac{h^2}{c_v\gamma(\gamma-1)}S_{rr}\nonumber\\
  &= -\frac{\gamma-1}{8} \left( \alpha-\beta + \frac{2h(\gamma u^2-h^2)}{\gamma c_\gamma c_1c_3} S_r - \frac{2m u^2 h}{r c_1 c_3} \right)^2 \nonumber\\
  &\quad -\left(\frac12(\alpha+\beta) - \frac{uh^2}{\gamma c_v c_1c_3} S_r + \frac{m u h^2}{r c_1 c_3} \right)^2 \nonumber\\
  &\quad + \frac{h}{2c_v \gamma} \left( \alpha-\beta + \frac{2h(\gamma u^2-h^2)}{\gamma c_\gamma c_1c_3} S_r - \frac{2m u^2 h}{r c_1 c_3} \right) S_r \nonumber\\
  &\quad + \frac{h^2}{c_{\gamma} \gamma } \bigg( \frac{1}{2h}(\alpha-\beta) S_r - \frac{m h^2}{r c_1 c_3} S_r + \frac{(\gamma-1) h^2}{\gamma c_\gamma c_1 c_3} S_r^2 + \gamma_k^2 r^{2m} e^{-\frac{2S}{c_\gamma}} h^{\frac{4}{\gamma-1}} \widetilde{S}_{\xi\xi} \bigg)\nonumber\\
  &=  -\frac{\gamma + 1}{8}\alpha^2 - \frac{3 - \gamma}{4}\alpha\beta - \frac{h(\gamma u - h)((\gamma - 2)u - h)}{2c_{\gamma} \gamma c_1 c_3}\alpha S_r + \frac{muh((\gamma - 1)u - 2h)}{2r c_1 c_3}\alpha  \nonumber\\
  &\quad-\frac{\gamma + 1}{8}\beta^2 + \frac{h(\gamma u + h)((\gamma - 2)u + h)}{2c_{\gamma} \gamma c_1 c_3}\beta S_r - \frac{muh\left[ (\gamma - 1)u + 2h \right]}{2r c_1 c_3}\beta \nonumber\\
  &\quad - \frac{h^2(\gamma u^2 + h^2)((\gamma - 2)u^2 + h^2)}{2c_v c_{\gamma} \gamma^2 c_1^2 c_3^2}S_r^2 \nonumber\\
  &\quad+\frac{mh^2((\gamma - 1)u^2 + uh + h^2)((\gamma - 1)u^2 - uh + h^2)}{c_{\gamma} \gamma r c_1^2 c_3^2}S_r \nonumber\\
   &\quad +  \frac{ h^2}{\gamma c_{\gamma} } \gamma_k^2 r^{2m} e^{-\frac{2S}{c_\gamma}} h^{\frac{4}{\gamma-1}}  \widetilde{S}_{\xi\xi}-\frac{m^2 u^2 h^2 \left( (\gamma - 1)u^2 + 2h^2 \right)}{2r^2 c_1^2 c_3^2}. \notag
\end{align}

\noindent\textbf{Computation of $I_3$.}
We apply \eqref{2.1} and \eqref{2.8} again to acquire
\begin{align}
I_3 &= \frac{1}{\gamma c_\gamma}\bigg\{
       -\frac{h^2(\gamma-1)}{{c_1}^2}u_tS_r+\frac{\gamma u^2-2uh+h^2}{{c_1}^2}h_tS_r- \frac{h(\gamma u-h)}{c_1}u_rS_r-
       \frac{uh(\gamma u-h)}{c_1}S_{rr}\bigg\}\nonumber\\
       &=
-\frac{h^2(\gamma-1)}{\gamma c_\gamma c_1^2}\left( -\frac{c_3}{2}\alpha - \frac{c_1}{2}\beta \right)S_r \nonumber \\
&\quad +\frac{\gamma u^2-2uh+h^2}{\gamma c_\gamma c_1^2}\left( -\frac{(\gamma-1)c_3}{4}\alpha + \frac{(\gamma-1)c_1}{4}\beta - \frac{uh}{2c_v} S_r \right)S_r \notag \\
&\quad - \frac{h(\gamma u-h)}{\gamma c_\gamma c_1}\left( \frac12(\alpha+\beta) - \frac{uh^2}{\gamma c_v c_1 c_3} S_r + \frac{m u h^2}{r c_1 c_3} \right)S_r \nonumber \\
&\quad -\frac{uh(\gamma u-h)}{\gamma c_\gamma c_1}\bigg( \frac{1}{2h}(\alpha-\beta) S_r - \frac{m h^2}{r c_1 c_3} S_r + \frac{(\gamma-1) h^2}{\gamma c_\gamma c_1 c_3} S_r^2 \notag + \gamma_k^2 r^{2m} e^{-\frac{2S}{c_\gamma}} h^{\frac{4}{\gamma-1}} \widetilde{S}_{\xi\xi} \bigg)
\\
&= -\frac{c_3(\gamma(\gamma+1)u^2-4\gamma uh + (3-\gamma)h^2)}{4\gamma c_\gamma c_1^2}\alpha S_r
    + \frac{\gamma(\gamma+1)u^2-4\gamma uh+(3\gamma-1)h^2}{4\gamma c_\gamma c_1}\beta S_r
    \nonumber\\
    &-\frac{uh(\gamma u^2-2uh+h^2)}{2\gamma c_\gamma c_vc_1^2} S_r^2
    -\frac{uh(\gamma u-h)}{\gamma c_\gamma c_1} \gamma_k^2 r^{2m} e^{-\frac{2S}{c_\gamma}} h^{\frac{4}{\gamma-1}}  \widetilde{S}_{\xi\xi}. \notag
\end{align}

\noindent\textbf{Computation of $I_4$.}
It follows by \eqref{2.1} that
 \begin{align}
 I_4 &= -\frac{m}{r}\bigg(\frac{h}{c_1}u_t+\frac{u}{c_1}h_t-\frac{uh}{c_1^2}(u_t-h_t)\bigg)= \frac{m}{r c_1^2}\left( h^2 u_t - u^2 h_t \right) \nonumber\\
    &= \frac{m}{r c_1^2}\left\{ h^2\left( -\frac{c_3}{2}\alpha - \frac{c_1}{2}\beta \right) - u^2\left( -\frac{(\gamma-1)c_3}{4}\alpha + \frac{(\gamma-1)c_1}{4}\beta - \frac{u h}{2c_v} S_r \right) \right\} \nonumber\\
    &= -\frac{m c_3\big( 2h^2 - (\gamma-1)u^2 \big)}{4r c_1^2} \alpha - \frac{m\big( 2h^2 + (\gamma-1)u^2 \big)}{4r c_1} \beta + \frac{m u^3 h}{2c_v r c_1^2} S_r. \notag
 \end{align}

\noindent\textbf{Computation of $I_5$.}
Taking advantage of \eqref{2.8} and \eqref{2.13}, we have
\begin{align}
I_5
&=\frac{1}{\gamma c_\gamma}\left\{-\frac{h^2(\gamma-1)}{c_1}u_rS_r+\frac{\gamma u^2-2uh+h^2}{c_1}h_rS_r+h(\gamma u-h)S_{rr}\right\}\nonumber\\
&= -\frac{h^2(\gamma-1)}{\gamma c_\gamma c_1} \left( \frac{1}{2}(\alpha + \beta) - \frac{uh^2}{\gamma c_v c_1 c_3} S_r + \frac{muh^2}{r c_1 c_3} \right) S_r \nonumber\\
&\quad + \frac{(\gamma-1)(\gamma u^2 - 2uh + h^2)}{4\gamma c_\gamma c_1} \left( \alpha - \beta + \frac{2h(\gamma u^2 - h^2)}{\gamma c_v c_1 c_3} S_r - \frac{2mu^2 h}{r c_1 c_3} \right) S_r \nonumber\\
&\quad + \frac{h(\gamma u - h)}{\gamma c_\gamma} \left( \frac{1}{2h}(\alpha - \beta) S_r - \frac{mh^2}{r c_1 c_3} S_r + \frac{(\gamma-1)h^2}{\gamma c_v c_1 c_3} S_r^2 + \gamma k^2 r^{2m} e^{-\frac{2S}{c_\gamma}} h^{\frac{4}{\gamma-1}} \widetilde{S}_{\xi\xi} \right)\nonumber\\
&=  \frac{\gamma(\gamma+1)u^2-4\gamma uh+(3-\gamma)h^2}{4\gamma c_\gamma c_1}\alpha S_r
- \frac{\gamma(\gamma+1)u^2-4\gamma uh+(3\gamma-1)h^2}{4\gamma c_\gamma c_1}\beta  S_r \nonumber\\
& + \frac{h(\gamma u^2+h^2)(\gamma u^2-2uh+h^2)}{2\gamma^2 c_\gamma c_v c_1^2c_3} S_r^2 - \frac{mh(\gamma u^2-2uh+h^2)(\frac{\gamma-1}{2}u^2+h^2)}{\gamma c_\gamma rc_1^2c_3} S_r \nonumber\\
&\quad+ \frac{h(\gamma u - h)}{\gamma c_\gamma} \, \gamma_k^2 \, r^{2m} e^{-\frac{2S}{c_\gamma}} h^{\frac{4}{\gamma-1}} \widetilde{S}_{\xi\xi}. \notag
   \end{align}
\noindent\textbf{Computation of $I_6$.}
From \eqref{2.13}, we obtain
\begin{align}
 I_6
  &=\ \frac{m h^2}{rc_1}\left( \frac12(\alpha+\beta) - \frac{uh^2}{\gamma c_v c_1 c_3}S_r + \frac{m u h^2}{rc_1 c_3} \right) \nonumber\\
  &\quad-\frac{m u^2(\gamma-1)}{4rc_1}\left( \alpha-\beta + \frac{2h(\gamma u^2-h^2)}{\gamma c_\gamma c_1 c_3}S_r - \frac{2m u^2 h}{rc_1 c_3} \right) +\frac{m u h}{r^2}\nonumber\\
  &= \frac{m\big[2h^2 - (\gamma-1)u^2\big]}{4r c_1} \,\alpha + \frac{m\big[2h^2 + (\gamma-1)u^2\big]}{4r c_1} \,\beta \nonumber\\
  &\quad - \frac{muh(\gamma u^3-uh^2+2h^3)}{2\gamma r c_v{c_1}^2 c_3}  S_r + \frac{m u h}{r^2}\left( 1 + \frac{m(2h^3 + (\gamma-1)u^3)}{2 {c_1}^2 c_3}  \right). \notag
\end{align}

\noindent\textbf{Integration and simplification.} Putting the results of $I_i$ ($i=1,\cdots,6$) into \eqref{3.4} yields
\begin{align}\label{3.5}
\partial_1\beta=&I_1+I_2+I_3+I_4+I_5+I_6 \notag \\
=& -\frac{\gamma+1}{4}\beta^{2}
+ \frac{\gamma-3}{4}\alpha\beta +I_{7}\alpha S_{r}+I_{8}\alpha +I_{9}\beta S_{r} +I_{10}\beta \notag \\
&+I_{11} S_{r}^{2} +I_{12}S_r +I_{13} \widetilde{S}_{\xi\xi} +I_{14},
\end{align}
where
\begin{align*}
I_{7}=&\frac{(\gamma+1)h(\gamma u+h)}{4\gamma c_\gamma c_3}
   - \frac{h(\gamma u-h)((\gamma-2)u-h)}{2\gamma c_\gamma c_1 c_3} \\
&- \frac{c_3\big(\gamma(\gamma+1)u^{2}-4\gamma uh+(3-\gamma)h^{2}\big)}{4\gamma c_\gamma c_1^{2}} + \frac{\gamma(\gamma+1)u^{2}-4\gamma uh+(3-\gamma)h^{2}}{4\gamma c_\gamma c_1}, \\
I_{8}=&\frac{m\big((\gamma-1)u^{2}+2h^{2}\big)}{4r c_3}
   + \frac{muh\big((\gamma-1)u-2h\big)}{2r c_1 c_3} \\
&- \frac{mc_3\big(2h^{2}-(\gamma-1)u^{2}\big)}{4r c_1^{2}}  + \frac{m\big(2h^{2}-(\gamma-1)u^{2}\big)}{4r c_1}, \\
I_{9}=&\frac{(\gamma+1)h(\gamma u-h)}{4\gamma c_\gamma c_1}
   + \frac{h(\gamma u+h)((\gamma-2)u+h)}{2\gamma c_\gamma c_1 c_3}, \\
I_{10}=&- \frac{m\left((\gamma-1)u^2 + 2h^2\right)}{4r c_1}
-\frac{muh\left((\gamma-1)u + 2h\right)}{2r c_1 c_3},
\end{align*}
and
\begin{align*}
I_{11}=&-\frac{(\gamma+1)uh^{3}(\gamma u^{2}-h^{2})}{2\gamma^{2} c_v c_\gamma c_1^{2} c_3^{2}}
   - \frac{h^{2}(\gamma u^{2}+h^{2})\big((\gamma-2)u^{2}+h^{2}\big)}{2\gamma^{2} c_v c_\gamma c_1^{2} c_3^{2}} \\
& - \frac{uh(\gamma u^{2}-2uh+h^{2})}{2\gamma c_\gamma c_v c_1^{2}}
   + \frac{h(\gamma u^{2}+h^{2})(\gamma u^{2}-2uh+h^{2})}{2\gamma^{2} c_\gamma c_v c_1^{2} c_3}, \\
I_{12}=&\frac{muh\big((\gamma-1)u^{2}+2h^{2}\big)\big((\gamma-2)h^{2}+\gamma u^{2}\big)}{2\gamma c_\gamma r c_1^{2} c_3^{2}}  \\
&+ \frac{mh^{2}\big((\gamma-1)u^{2}+uh+h^{2}\big)\big((\gamma-1)u^{2}-uh+h^{2}\big)}{\gamma c_\gamma r c_1^{2} c_3^{2}} \\
&  + \frac{mu^{3}h}{2c_v r c_1^{2}}
   - \frac{mh(\gamma u^{2}-2uh+h^{2})\big(\frac{\gamma-1}{2}u^{2}+h^{2}\big)}{\gamma c_\gamma r c_1^{2} c_3}
   - \frac{muh(\gamma u^{3}-uh^{2}+2h^{3})}{2\gamma r c_v c_1^{2} c_3}, \\
I_{13}=&\frac{1}{\gamma c_{\gamma}}
\left( h^2 + h(\gamma u - h) - \frac{uh(\gamma u-h)}{c_1} \right)\gamma_k^2 r^{2m} e^{-\frac{2S}{c_\gamma}} h^{\frac{4}{\gamma-1}}, \\
I_{14}=&-\frac{m^{2}uh\big((\gamma-1)u^{4}+2h^{4}\big)}{2r^{2} c_1^{2} c_3^{2}}
   - \frac{m^{2}u^{2}h^{2}\big((\gamma-1)u^{2}+2h^{2}\big)}{2r^{2} c_1^{2} c_3^{2}} + \frac{m^{2}uh\big(2h^{3}+(\gamma-1)u^{3}\big)}{2r^{2} c_1^{2} c_3}.
\end{align*}

We next simplify the coefficients $I_{i}$ ($i=7,\cdots, 14$) one by one. For $I_{7}$ and $I_{8}$, we have
\begin{align*}
I_{7}
&= \frac{h}{4\gamma c_\gamma c_1 c_3} \Big\{ (\gamma+1)(\gamma u+h)(u-h) - 2(\gamma u-h)((\gamma-2)u-h) \Big\} \\
&\quad - \frac{\gamma(\gamma+1)u^{2}-4\gamma uh+(3-\gamma)h^{2}}{4\gamma c_\gamma c_1^2} ( c_3 - c_1  ) \\
&= \frac{h}{4\gamma c_\gamma c_1^2 c_3} \Big\{  (\gamma-1) \big( -3\gamma u^3 - (2\gamma-3)u^2 h + (\gamma+6)uh^2 + 3h^3 \big) \Big\} \\
&= \frac{h}{4\gamma c_\gamma c_1^2 c_3} \Big\{ (\gamma-1) \big( 3h^2 + (\gamma+3)u h -3\gamma u^2  \big)(u+h) \Big\} \\
&= -\frac{h[ 3\gamma u^2-(\gamma+3)u h-3h^2  ]}{4\gamma c_v c_1^2 },
\end{align*}
and
\begin{align*}
I_{8}
&= \frac{m\big((\gamma-1)u^{2}+2h^{2}\big)}{4r (u+h)}
   + \frac{m u h\big((\gamma-1)u-2h\big)}{2r (u-h)(u+h)}
  - \frac{m\big(2h^{2}-(\gamma-1)u^{2}\big)}{4r(u-h)^{2}}
   \cdot 2h \\
&=\frac{m\big((\gamma-1)u^{2}+2h^{2}\big)}{4r (u+h)}
   +\frac{mh((\gamma-1)u^3 - hu^2 - h^3)}{r(u-h)^2(u+h)}\\
&=\frac{m}{4r(u-h)^2(u+h)} \Big\{ (\gamma - 1)u^4 + 2(\gamma - 1)u^3 h + (\gamma - 1)u^2 h^2 - 2u^2 h^2 - 4u h^3 - 2h^4 \Big\} \\
&=\frac{m}{4r(u-h)^2(u+h)} \Big\{  (\gamma-1)u^2(u^2+2uh+h^2)-2h^2(u^2+2uh+h^2) \Big\} \\
&= \frac{mc_3}{2rc_1^{2}}\left(\frac{\gamma-1}{2}u^{2} - h^{2}\right).
\end{align*}
For $I_{9}$ and $I_{10}$, we deduce
\begin{align*}
I_{9}
&=\frac{h}{4\gamma c_\gamma c_1 c_3} \left\{
      (\gamma+1)(\gamma u-h)(u+h)
      + 2(\gamma u+h)\big((\gamma-2)u+h\big)
    \right\} \\
&= \frac{h}{4\gamma c_\gamma c_1 c_3} \left\{
      3\gamma(\gamma-1)u^2 + (\gamma+5)(\gamma-1)u h + (1-\gamma)h^2
    \right\} \\
&= \frac{h\big(3\gamma u^2 + (\gamma+5)u h - h^2\big)}{4\gamma c_v c_1 c_3},
\end{align*}
and
\begin{align*}
I_{10}
=&-\bigg(\frac{m\big((\gamma-1)u^{2}+2h^{2}\big)}{4r c_1}- \frac{m c_3\big((\gamma-1)u^{2}-2h^{2}\big)}{4r c_1^2}\bigg) \\
&
- \frac{m u h\big((\gamma-1)u+2h\big)}{2r c_1 c_3}
- \frac{m c_3\big((\gamma-1)u^{2}-2h^{2}\big)}{4r c_1^2} \\
=& \frac{muh ((\gamma-1)u-2h)}{2r c_1^2}- \frac{m u h\big((\gamma-1)u+2h\big)}{2r c_1 c_3}
- \frac{m c_3\big((\gamma-1)u^{2}-2h^{2}\big)}{4r c_1^2} \\
=& -\frac{(3-\gamma) m u^2 h^2}{r c_1^2 c_3}- \frac{m c_3}{r c_1^2}\left( \frac{\gamma-1}{2}u^{2} - h^{2} \right).
\end{align*}
For $I_{11}$ and $I_{12}$, one has
\begin{align*}
I_{11}
&= \frac{1}{2\gamma^{2} c_v c_\gamma c_1^{2} c_3^{2}} \bigg\{
   -(\gamma+1)uh^{3}(\gamma u^{2}-h^{2})
   - h^{2}(\gamma u^{2}+h^{2})\big((\gamma-2)u^{2}+h^{2}\big) \\
&\quad - \gamma c_3^{2} uh(\gamma u^{2}-2uh+h^{2})
   +c_3 h(\gamma u^{2}+h^{2})(\gamma u^{2}-2uh+h^{2}) \bigg\} \\
&= \frac{2\gamma(\gamma-1)u^3 h^2(h-u)}{2\gamma^{2} c_v c_\gamma c_1^{2} c_3^2} = -\frac{u^3 h^2}{\gamma c_v^2 c_1^{2} c_3},
\end{align*}
and
\begin{align*}
I_{12}
=& \frac{mh\big((\gamma-1)u^{2}+2h^{2}\big)}{2\gamma c_\gamma r c_1^{2}c_3^{2}} \left[ u\big((\gamma-2)h^{2}+\gamma u^{2}\big)
- (u+h)(\gamma u^{2}-2uh+h^{2}) \right] \\
&+ \frac{mh^{2}\big((\gamma-1)u^{2}+uh+h^{2}\big)\big((\gamma-1)u^{2}-uh+h^{2}\big)}{\gamma c_\gamma r c_1^{2} c_3^{2}} \\
&+ \frac{mh}{2\gamma rc_v c_1^{2} c_3} \left[ \gamma u^{3}
- u(\gamma u^{3}-uh^{2}+2h^{3}) \right]\\
=&\frac{mh^2\left( (\gamma-1)u^3 h + \gamma u^4 + u^2 h^2 + 2 u h^3 \right)}{2\gamma c_v r c_1^{2} c_3^{2}}
  +\frac{muh^2\left( \gamma u^2 + uh -2 h^2 \right)}{2r \gamma c_v c_1^2 c_3}\\
=&\frac{mu h^2 }{2r\gamma c_v c_1^2 c_3^2} \left( 2\gamma u^2 h + 2\gamma u^3  \right)
= \frac{mu^3 h^2 }{r c_v c_1^2 c_3}.
\end{align*}
We compute $I_{13}$ and $I_{14}$ to achieve
\begin{align*}
I_{13}
&= \frac{\gamma_k^2 r^{2m} e^{-\frac{2S}{c_\gamma}} h^{\frac{4}{\gamma-1}} }{\gamma c_{\gamma}}
\cdot \frac{u h^2(1 - \gamma)}{u-h}  = -\frac{u h^2}{\gamma c_v c_1}\gamma_k^2 r^{2m} e^{-\frac{2S}{c_\gamma}} h^{\frac{4}{\gamma-1}},
\end{align*}
and
\begin{align*}
I_{14}
&= -\frac{m^{2}uh\big[ (\gamma-1)u^{3}(u+h) + 2h^{3}(u+h) \big]}{2r^{2} c_1^{2} c_3^{2}}
   + \frac{m^{2}uh\big(2h^{3}+(\gamma-1)u^{3}\big)}{2r^{2} c_1^{2} c_3} \\
&= -\frac{m^{2}uh\big(2h^{3}+(\gamma-1)u^{3}\big)}{2r^{2} c_1^{2} c_3}
   + \frac{m^{2}uh\big(2h^{3}+(\gamma-1)u^{3}\big)}{2r^{2} c_1^{2} c_3}= 0.
\end{align*}

We insert the results of $I_i$ ($i=7,\cdots,14$) into \eqref{3.5} to obtain
\begin{align*}
\partial_1\beta
=& -\frac{\gamma+1}{4}\beta^{2}
+ \frac{\gamma-3}{4}\alpha\beta +(I_{9} S_{r} +I_{10})\beta +(I_{7} S_{r}+I_{8})\alpha \notag \\
&+[(I_{11} S_{r} +I_{12})S_r +I_{13} \widetilde{S}_{\xi\xi}] +I_{14} \\
=& -\frac{\gamma+1}{4}\beta^{2}
+ \frac{\gamma-3}{4}\alpha\beta \\
&-\bigg\{ \frac{(3-\gamma) m u^2 h^2}{r c_1^2 c_3}+ \frac{m c_3}{r c_1^2}\left( \frac{\gamma-1}{2}u^{2} - h^{2} \right)
 -\frac{h\big(3\gamma u^2 + (\gamma+5)u h - h^2\big)}{4\gamma c_v c_1 c_3} S_r\bigg\}\beta \\
&+\bigg\{ \frac{mc_3}{2r c_1^2} \left( \frac{\gamma-1}{2}u^{2} - h^{2} \right)
 -\frac{h\big( 3\gamma u^2-(\gamma+3)u h-3h^2  \big)}{4\gamma c_v c_1^2 } S_r \bigg\}\alpha \\
&+\bigg\{-\frac{u^3 h^2}{\gamma c_v^2 c_1^{2} c_3}S_r^2
+ \frac{mu^3 h^2 }{r c_v c_1^2 c_3} S_r
-\frac{u h^2}{\gamma c_v c_1}\gamma_k^2 r^{2m} e^{-\frac{2S}{c_\gamma}} h^{\frac{4}{\gamma-1}} \widetilde{S}_{\xi\xi}\bigg\},
\end{align*}
which together with \eqref{2.7} yields the desired equation for $\beta$ in \eqref{3.1}.

\subsection*{II. Derivation of the $\alpha$-equation}

From the definition of $\beta$ in \eqref{2.11} and equation \eqref{2.2}, we similarly obtain the expression for $\alpha$
\begin{align}\label{3.6}
\partial_3\alpha=& \partial_t\alpha + c_3\partial_r \alpha =\left(u_r+\frac{2}{\gamma-1}h_r-\frac{h(\gamma u+h)}{\gamma c_\gamma c_3}S_r+\frac{m uh}{rc_3}\right)_t \nonumber\\
    & +c_3\left(u_r+\frac{2}{\gamma-1}h_r-\frac{h(\gamma u+h)}{\gamma c_\gamma c_3}S_r+\frac{m uh}{rc_3}\right)_r
    \nonumber\\
=&\underbrace{\Big(\frac{2}{\gamma-1}h_{rt}+\frac{2}{\gamma-1}uh_{rr}+hu_{rr}\Big)}_{J_1}
    +\underbrace{\Big(u_{rt}+\frac{2}{\gamma-1}hh_{rr}+uu_{rr}\Big)}_{J_2}
    \nonumber\\
    & \underbrace{-\frac{1}{\gamma c_\gamma}\Big(\frac{\gamma u+h}{c_3}h_tS_r+\frac{h}{c_3}(\gamma u_t+h_t)S_r-\frac{h(\gamma u+h)}{c_3^2}(u_t+h_t)S_r+\frac{h(\gamma u+h)}{c_3}S_{rt}\Big)}_{J_3}
    \nonumber\\
    & +\underbrace{\frac{m}{r}\Big(\frac{h}{c_3}u_t+\frac{u}{c_3}h_t -\frac{uh}{c_3^2}(u_t+h_t)\Big)}_{J_4}
    \nonumber\\
    &\quad\underbrace{-\frac{1}{\gamma c_\gamma}\Big((\gamma u+h)h_rS_r+h(\gamma u_r+h_r)S_r-\frac{h(\gamma u+h)}{c_3}(u_r+h_r)S_r+(h(\gamma u+h))S_{rr}\Big)}_{J_5}
    \nonumber\\
    & +\underbrace{\frac{m}{r}\Big(hu_r+uh_r-\frac{uh}{c_3}(u_r+h_r)-\frac{uh}{r}\Big)}_{J_6}.
\end{align}
We first note that $J_1=-I_1$ and $J_2=I_2$. Next we compute $J_{i}$ ($i=3,4,5,6$).

\noindent\textbf{Computation of $J_3$.} Employing \eqref{2.1} and
\eqref{2.8} arrives at
 \begin{align}
J_3
=& -\frac{1}{\gamma c_\gamma}\bigg\{
       \frac{h^2(\gamma-1)}{{c_3}^2}u_tS_r+\frac{\gamma u^2+2uh+h^2}{{c_3}^2}h_tS_r- \frac{h(\gamma u+h)}{c_3}u_rS_r- \frac{uh(\gamma u+h)}{c_3}S_{rr}\bigg\}\nonumber\\
=&
-\frac{h^2(\gamma-1)}{\gamma c_\gamma c_3^2}\left( -\frac{c_3}{2}\alpha - \frac{c_1}{2}\beta \right)S_r \nonumber \\
&  -\frac{\gamma u^2+2uh+h^2}{\gamma c_\gamma c_3^2}\left( -\frac{(\gamma-1)c_3}{4}\alpha + \frac{(\gamma-1)c_1}{4}\beta - \frac{uh}{2c_v} S_r \right)S_r \nonumber \\
& + \frac{h(\gamma u+h)}{\gamma c_\gamma c_3}\left( \frac12(\alpha+\beta) - \frac{uh^2}{\gamma c_v c_1 c_3} S_r + \frac{m u h^2}{r c_1 c_3} \right)S_r \nonumber \\
& + \frac{uh(\gamma u+h)}{\gamma c_\gamma c_3}\bigg\{ \frac{1}{2h}(\alpha-\beta) S_r - \frac{m h^2}{r c_1 c_3} S_r + \frac{(\gamma-1) h^2}{\gamma c_\gamma c_1 c_3} S_r^2 + \gamma_k^2 r^{2m} e^{-\frac{2S}{c_\gamma}} h^{\frac{4}{\gamma-1}} \widetilde{S}_{\xi\xi} \bigg\}\nonumber\\
= & \frac{\gamma(\gamma+1)u^2+4\gamma uh + (3\gamma-1)h^2}{4\gamma c_\gamma c_3} \alpha S_r
    - \frac{c_1(\gamma(\gamma+1)u^2+4\gamma uh+(3-\gamma)h^2)}{4\gamma c_\gamma c_3^2} \beta S_r  \nonumber\\
& + \frac{uh(\gamma u^2+2uh+h^2)}{2\gamma c_\gamma c_v c_3^2}  S_r^2 +  \frac{uh(\gamma u+h)}{\gamma c_\gamma c_3} \gamma_k^2 r^{2m} e^{-\frac{2S}{c_\gamma}} h^{\frac{4}{\gamma-1}}  \widetilde{S}_{\xi\xi}. \notag
 \end{align}

\noindent\textbf{Computation of $J_4$.} We directly calculate by \eqref{2.1} that
 \begin{align}
 J_4
=& \frac{m}{r}\left( \frac{h}{c_3}\left( 1 - \frac{u}{c_3} \right) u_t + \frac{u}{c_3}\left( 1 - \frac{h}{c_3} \right) h_t \right) = \frac{m}{r c_3^2}\left( h^2 u_t + u^2 h_t \right) \nonumber\\
=& \frac{m}{r c_3^2}\left\{ h^2\left( -\frac{c_3}{2}\alpha - \frac{c_1}{2}\beta \right) + u^2\left( -\frac{(\gamma-1)c_3}{4}\alpha + \frac{(\gamma-1)c_1}{4}\beta - \frac{u h}{2c_v} S_r \right) \right\} \nonumber\\
=& -\frac{m\big( 2h^2 + (\gamma-1)u^2 \big)}{4r c_3} \alpha - \frac{m c_1\big( 2h^2 - (\gamma-1)u^2 \big)}{4r c_3^2} \beta - \frac{m u^3 h}{2c_v r c_3^2} S_r. \notag
 \end{align}

\noindent\textbf{Computation of $J_5$.}
It follows by \eqref{2.8} and \eqref{2.13} that
 \begin{align}
J_5
= &-\frac{1}{\gamma c_\gamma}\left\{\frac{h^2(\gamma-1)}{c_3}u_rS_r+\frac{\gamma u^2+2uh+h^2}{c_3}h_rS_r+h(\gamma u+h)S_{rr}\right\}\notag\\
=& -\frac{h^2(\gamma-1)}{\gamma c_\gamma c_3}\left(\frac12(\alpha+\beta)-\frac{uh^2}{\gamma c_v c_1c_3}S_r+\frac{muh^2}{rc_1c_3}\right)S_r \notag\\
& -\frac{(\gamma-1)(\gamma u^2+2uh+h^2)}{4\gamma c_\gamma c_3}\left(\alpha-\beta+\frac{2h(\gamma u^2-h^2)}{\gamma c_\gamma c_1c_3}S_r-\frac{2mu^2h}{rc_1c_3}\right)S_r \notag\\
& -\frac{h(\gamma u+h)}{\gamma c_\gamma}\left(\frac{1}{2h}(\alpha-\beta)S_r-\frac{mh^2}{rc_1c_3}S_r+\frac{(\gamma-1)h^2}{\gamma c_\gamma c_1c_3}S_r^2+\gamma_k^2r^{2m}e^{-\frac{2S}{c_\gamma}}h^{\frac{4}{\gamma-1}} \widetilde{S}_{\xi\xi}\right)\notag\\
& -\frac{h(\gamma u+h)}{\gamma c_\gamma}\,\gamma_k^2 r^{2m}e^{-\frac{2S}{c_\gamma}}h^{\frac{4}{\gamma-1}}\widetilde{S}_{\xi\xi}\notag\\
=&- \frac{\gamma(\gamma+1)u^2+4\gamma uh + (3\gamma-1)h^2}{4\gamma c_\gamma c_3}\alpha S_r
+ \frac{\gamma(\gamma+1)u^2+4\gamma uh + (3-\gamma)h^2}{4\gamma c_\gamma c_3} \beta S_r\notag\\
&  -\frac{h(\gamma u^2 + h^2)(\gamma u^2+2uh+h^2)}{2\gamma^2 c_\gamma c_v c_1c_3^2} S_r^2
+\frac{mh(\gamma u^2+2uh+h^2)(\frac{\gamma-1}{2}u^2+h^2)}{\gamma c_\gamma rc_1c_3^2} S_r\notag\\
& -\frac{h(\gamma u+h)}{\gamma c_\gamma}\,\gamma_k^2 r^{2m}e^{-\frac{2S}{c_\gamma}}h^{\frac{4}{\gamma-1}}\widetilde{S}_{\xi\xi}. \notag
 \end{align}
\noindent\textbf{Computation of $J_6$.}
We use \eqref{2.13} to gain
 \begin{align}
J_6
=& \frac{m h^2}{rc_3}\left( \frac12(\alpha+\beta) - \frac{uh^2}{\gamma c_v c_1 c_3}S_r + \frac{m u h^2}{rc_1 c_3} \right) \nonumber\\
  & +\frac{m u^2(\gamma-1)}{4rc_3}\left( \alpha-\beta + \frac{2h(\gamma u^2-h^2)}{\gamma c_\gamma c_1 c_3}S_r - \frac{2m u^2 h}{rc_1 c_3} \right) -\frac{m u h}{r^2}\nonumber\\
=& \frac{m\big[2h^2 + (\gamma-1)u^2\big]}{4r c_3} \,\alpha + \frac{m\big[2h^2 - (\gamma-1)u^2\big]}{4r c_3} \,\beta \nonumber\\
  &  + \frac{muh(\gamma u^3-uh^2-2h^3)}{2\gamma r c_vc_1 {c_3}^2}  S_r - \frac{m u h}{r^2}\left( 1 - \frac{m(2h^3 - (\gamma-1)u^3)}{2 c_1 {c_3}^2} \right). \notag
 \end{align}

\noindent\textbf{Integration and simplification.} We substitute the results of $J_{i}$ ($i=1,\cdots,6$) into \eqref{3.6} to obtain
\begin{align}\label{3.7}
\partial_3 \alpha=&-I_1+I_2+J_3+J_{4}+J_{5}+J_{6}\notag\\
=&-\frac{\gamma+1}{4}\alpha^{2}
- \frac{3-\gamma}{4}\alpha\beta +J_7\alpha S_{r} +J_8\alpha +J_9\beta S_r +J_{10}\beta \notag \\
&+J_{11}S_{r}^2 +J_{12}S_r +J_{13}\widetilde{S}_{\xi\xi}  +J_{14},
\end{align}
where
\begin{align*}
J_7=&- \frac{(\gamma+1)h(\gamma u+h)}{4\gamma c_\gamma c_3}
   - \frac{h(\gamma u-h)((\gamma-2)u-h)}{2\gamma c_\gamma c_1 c_3}, \\
J_8=&- \frac{m\big((\gamma-1)u^2+2h^2\big)}{4r c_3}
   + \frac{muh\big((\gamma-1)u-2h\big)}{2r c_1 c_3}, \\
J_9=& -\frac{(\gamma+1)h(\gamma u-h)}{4\gamma c_\gamma c_1}
   + \frac{h(\gamma u+h)((\gamma-2)u+h)}{2\gamma c_\gamma c_1 c_3} \\
&- \frac{c_1\big(\gamma(\gamma+1)u^2+4\gamma uh+(3-\gamma)h^2\big)}{4\gamma c_\gamma c_3^2} + \frac{\gamma(\gamma+1)u^2+4\gamma uh+(3-\gamma)h^2}{4\gamma c_\gamma c_3}, \\
J_{10}=& \frac{m\big((\gamma-1)u^2+2h^2\big)}{4r c_1}
   - \frac{muh\big((\gamma-1)u+2h\big)}{2r c_1 c_3} \\
&- \frac{m c_1\big(2h^2-(\gamma-1)u^2\big)}{4r c_3^2} + \frac{m\big(2h^2-(\gamma-1)u^2\big)}{4r c_3},
\end{align*}
and
\begin{align*}
J_{11}=& \frac{(\gamma+1)uh^{3}(\gamma u^{2}-h^{2})}{2\gamma^{2} c_v c_\gamma c_1^{2} c_3^{2}}
   - \frac{h^{2}(\gamma u^{2}+h^{2})\big((\gamma-2)u^{2}+h^{2}\big)}{2\gamma^{2} c_v c_\gamma c_1^{2} c_3^{2}}  \\
&+ \frac{uh(\gamma u^{2}+2uh+h^{2})}{2\gamma c_\gamma c_v c_3^{2}}
   - \frac{h(\gamma u^{2}+h^{2})(\gamma u^{2}+2uh+h^{2})}{2\gamma^{2} c_\gamma c_v c_1 c_3^{2}}, \\
J_{12}=& -\frac{muh\big((\gamma-1)u^{2}+2h^{2}\big)\big((\gamma-2)h^{2}+\gamma u^{2}\big)}{2\gamma c_\gamma r c_1^{2} c_3^{2}} \\
&+ \frac{mh^{2}\big((\gamma-1)u^{2}+uh+h^{2}\big)\big((\gamma-1)u^{2}-uh+h^{2}\big)}{\gamma c_\gamma r c_1^{2} c_3^{2}}   \\
&- \frac{m u^{3} h}{2c_v r c_3^{2}}
   + \frac{mh(\gamma u^{2}+2uh+h^{2})\big(\frac{\gamma-1}{2}u^{2}+h^{2}\big)}{\gamma c_\gamma r c_1 c_3^{2}}
   + \frac{muh(\gamma u^{3}-uh^{2}-2h^{3})}{2\gamma r c_v c_1 c_3^{2}}, \\
J_{13}=& \frac{1}{\gamma c_{\gamma}}
\left( h^2 - h(\gamma u+h) + \frac{uh(\gamma u+h)}{c_3} \right)\gamma_k^2 r^{2m} e^{-\frac{2S}{c_\gamma}} h^{\frac{4}{\gamma-1}}, \\
J_{14}=&\frac{m^{2}uh\big((\gamma-1)u^{4}+2h^{4}\big)}{2r^{2} c_1^{2} c_3^{2}} - \frac{m^{2}u^{2}h^{2}\big((\gamma-1)u^{2}+2h^{2}\big)}{2r^{2} c_1^{2} c_3^{2}} + \frac{m^2u h\big(2h^{3}-(\gamma-1)u^{3}\big)}{2r^{2} c_1 c_3^{2}}.
\end{align*}

For $J_7$ and $J_8$, one deduces
\begin{align*}
J_7
&= -\frac{h}{4\gamma c_\gamma c_1 c_3} \bigg\{
      (\gamma+1)(\gamma u+h)(u-h)
      + 2(\gamma u-h)\big((\gamma-2)u-h\big)
    \bigg\} \\
&= -\frac{h}{4\gamma c_\gamma c_1 c_3} \bigg\{
      3\gamma(\gamma-1)u^2 - (\gamma+5)(\gamma-1)u h + (1-\gamma)h^2
    \bigg\} \\
&= -\frac{h\big(3\gamma u^2 - (\gamma+5)u h - h^2\big)}{4\gamma c_v c_1 c_3},
\end{align*}
and
\begin{align*}
J_8
=&\bigg(-\frac{m\big((\gamma-1)u^2+2h^2\big)}{4r c_3}
+ \frac{m c_1\big((\gamma-1)u^{2}-2h^{2}\big)}{4r c_3^2}\bigg) \\
&+  \frac{muh\big((\gamma-1)u-2h\big)}{2r c_1 c_3}- \frac{m c_1\big((\gamma-1)u^{2}-2h^{2}\big)}{4r c_3^2}  \\
=& -\frac{m u h \big( (\gamma-1) u + 2h \big)}{2r c_3^2} + \frac{muh\big((\gamma-1)u-2h\big)}{2r c_1 c_3}
- \frac{m c_1\big((\gamma-1)u^{2}-2h^{2}\big)}{4r c_3^2}  \\
=& -\frac{(3-\gamma) m u^2 h^2}{r c_1 c_3^2}- \frac{m c_1}{r c_3^2}\left( \frac{\gamma-1}{2}u^{2} - h^{2} \right).
\end{align*}
For $J_9$ and $J_{10}$, we have
\begin{align*}
J_9
=& \frac{h}{4\gamma c_\gamma c_1 c_3}\Big(-(\gamma+1)(\gamma u-h)(u+h) + 2(\gamma u+h)((\gamma-2)u+h)\Big)\\
  & + \frac{\big[\gamma(\gamma+1)u^2+4\gamma uh+(3-\gamma)h^2\big](c_3-c_1)}{4\gamma c_\gamma c_3^2} \\
=& \frac{h}{4\gamma c_\gamma c_1 c_3^2}\Big( 3\gamma(\gamma-1) u^3-(2\gamma-3)(\gamma-1)u^2 h-(\gamma+6)(\gamma-1)u h^2+3(\gamma-1)h^3 \Big) \\
=& \frac{h\big(3\gamma u^2+(\gamma+3)u h-3h^2\big)}{4\gamma c_v c_3^2 },
\end{align*}
and
\begin{align*}
J_{10}
&= \frac{m\big((\gamma-1)u^{2}+2h^{2}\big)}{4r (u-h)}
   - \frac{m u h\big((\gamma-1)u+2h\big)}{2r (u-h)(u+h)}
   + \frac{m (c_3-c_1) \big(2h^{2}-(\gamma-1)u^{2}\big)}{4r (u+h)^2} \\
&=\frac{m\big((\gamma-1)u^{2}+2h^{2}\big)}{4r (u-h)} -\frac{m h\big(  (\gamma-1)u^3 + hu^2 + h^3 \big)}{r (u-h)(u+h)^2}\\
&= \frac{m}{4r(u-h) (u+h)^2} \Big(
      (\gamma-1)u^{2}(u-h)^2 - 2h^{2}(u-h)^2 \Big)
= \frac{mc_1}{2r c_3^2} \left( \frac{\gamma-1}{2}u^{2} - h^{2} \right).
\end{align*}
For $J_{11}$ and $J_{12}$, we acquire
\begin{align*}
J_{11}
=& \frac{1}{2\gamma^{2} c_v c_\gamma c_1^{2} c_3^{2}} \bigg\{
   (\gamma+1)uh^{3}(\gamma u^{2}-h^{2}) - h^{2}(\gamma u^{2}+h^{2})\big((\gamma-2)u^{2}+h^{2}\big) \\
& + \gamma c_1^{2} uh (\gamma u^{2}+2uh+h^{2}) - c_1 h (\gamma u^{2}+h^{2})(\gamma u^{2}+2uh+h^{2}) \bigg\} \\
=& \frac{1}{2\gamma^{2} c_v c_\gamma c_1^{2} c_3^{2}} \Big(
   2\gamma(\gamma-1))u^3 h^3 -2\gamma(\gamma-1)u^4 h^2 \Big) \\
=& \frac{2\gamma(\gamma-1)u^3 h^2(h - u)}{2\gamma^{2} c_v c_\gamma c_1^{2} c_3^{2}} = -\frac{u^3 h^2}{\gamma c_v^2 c_1 c_3^2},
\end{align*}
and
\begin{align*}
J_{12}
=& -\frac{mh\big((\gamma-1)u^{2}+2h^{2}\big)}{2\gamma c_\gamma r c_1^{2} c_3^{2}} \Big( u\big((\gamma-2)h^{2}+\gamma u^{2}\big)
- c_1(\gamma u^{2}+2uh+h^{2}) \Big) \\
& + \frac{mh^{2}\big((\gamma-1)u^{2}+uh+h^{2}\big)\big((\gamma-1)u^{2}-uh+h^{2}\big)}{\gamma c_\gamma r c_1^{2} c_3^{2}} \\
& - \frac{muh}{2\gamma r c_v c_1 c_3^{2}} \Big( \gamma u^{2}(u-h) - (\gamma u^{3}-uh^{2}-2h^{3}) \Big)\\
=&\frac{mh^2\left( -u^3 h + \gamma u^4 + u^2 h^2 - 2 u h^3 \right)}{2r\gamma c_v  c_1^{2} c_3^{2}}
+\frac{muh^2\left( \gamma u^2 - uh -2 h^2 \right)}{2r \gamma c_v c_1 c_3^2}\\
=&\frac{mu h^2 }{2r\gamma c_v c_1^2 c_3^2} ( 2\gamma u^3 - 2\gamma u^2 h )
= \frac{mu^3 h^2 }{r c_v c_1 c_3^2}.
\end{align*}
Finally, we calculate $J_{13}$ and $J_{14}$ to attain
\begin{align*}
J_{13}=& \frac{\gamma_k^2 r^{2m} e^{-\frac{2S}{c_\gamma}} h^{\frac{4}{\gamma-1}}}{\gamma c_{\gamma}}
\cdot \frac{u h^2 (1-\gamma)}{u+h} = -\frac{u h^2}{\gamma c_v c_3}\gamma_k^2 r^{2m} e^{-\frac{2S}{c_\gamma}} h^{\frac{4}{\gamma-1}},
\end{align*}
and
\begin{align*}
J_{14}
=& \frac{m^{2}uh\big[ (\gamma-1)u^{3}(u-h) - 2h^{3}(u-h) \big]}{2r^{2}c_1^{2} c_3^{2}}
   + \frac{m^{2}uh\big(2h^{3}-(\gamma-1)u^{3}\big)}{2r^{2} c_1 c_3^{2}} \\
=& \frac{m^{2}uh\,\big((\gamma-1)u^{3}-2h^{3}\big)}{2r^{2}c_1 c_3^{2}}
   - \frac{m^{2}uh\,\big((\gamma-1)u^{3}-2h^{3}\big)}{2r^{2}c_1 c_3^{2}}= 0.
\end{align*}

Summing up the results of $J_{i}$ ($i=7,\cdots,14$), we obtain by \eqref{3.7}
\begin{align*}
\partial_3 \alpha =&-\frac{\gamma+1}{4}\alpha^{2}
- \frac{3-\gamma}{4}\alpha\beta +(J_7 S_{r} +J_8)\alpha +(J_9 S_r +J_{10})\beta \\
&+[(J_{11}S_{r}  +J_{12})S_r +J_{13}\widetilde{S}_{\xi\xi}]  +J_{14} \\
=&-\frac{\gamma+1}{4}\alpha^{2}
- \frac{3-\gamma}{4}\alpha\beta \\
&-\bigg\{\frac{(3-\gamma) m u^2 h^2}{r c_1 c_3^2}+ \frac{m c_1}{r c_3^2}\left( \frac{\gamma-1}{2}u^{2} - h^{2} \right)
 + \frac{h\big(3\gamma u^2 - (\gamma+5)u h - h^2\big)}{4\gamma c_v c_1 c_3} S_r\bigg\}\alpha \\
& +\bigg\{\frac{mc_1}{2r c_3^2} \left( \frac{\gamma-1}{2}u^{2} - h^{2} \right)
+ \frac{h\big[3\gamma u^2+(\gamma+3)u h-3h^2\big]}{4\gamma c_v c_3^2 } S_r\bigg\}\beta \\ &+\bigg\{-\frac{u^3 h^2}{\gamma c_v^2 c_1 c_3^2}S_r^2
+ \frac{mu^3 h^2}{r c_v c_1 c_3^2} S_r
- \frac{u h^2}{\gamma c_v c_3}\gamma_k^2 r^{2m} e^{-\frac{2S}{c_\gamma}} h^{\frac{4}{\gamma-1}} \widetilde{S}_{\xi\xi}\bigg\},
\end{align*}
from which, we apply \eqref{2.7} to achieve the equation for $\alpha$ in \eqref{3.1}. The proof of the lemma is complete.
\end{proof}

\section{$C^1$-estimates of the solution}\label{S4}

In this section, we derive the a priori $C^1$-estimates of the solution in the domain $\Omega_{d}$ by establishing invariant domains of $(w,z)$ and $(\alpha,\beta)$.

\begin{lem}\label{lem2}
The quantities $\widetilde{S}_\xi$ and $\widetilde{S}_{\xi\xi}$ are invariant over 2-characters, that is
\begin{align}\label{4.1}
\pa_2\widetilde{S}_\xi=0,\qquad \pa_2\widetilde{S}_{\xi\xi}=0.
\end{align}
\end{lem}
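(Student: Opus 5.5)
The plan is to pass to Lagrangian coordinates $(\xi,t)$ and observe that $\partial_2$, written in these coordinates, is just the time derivative at fixed $\xi$.

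\textbf{Step 1: $\xi$ is constant along 2-characteristics.} From \eqref{2.6} and the first equation of \eqref{1.1}, written as $(\zeta^m\rho)_t=-(\zeta^m\rho u)_\zeta$, we get
\begin{align*}
\xi_t=\int_0^r(\zeta^m\rho)_t\,{\rm d}\zeta=-r^m\rho u,
\end{align*}
provided the boundary term $\zeta^m\rho u$ vanishes as $\zeta\to0$. This holds formally, since $m\ge1$ and $\rho,u$ are bounded near the origin. Also $\xi_r=r^m\rho$, so
\begin{align*}
\pa_2\xi=\xi_t+u\xi_r=0.
\end{align*}
Since $\Omega_d$ lies away from $r=0$ and only $\xi$-differences matter, one could instead use a reference 2-characteristic in place of $0$ in \eqref{2.6}. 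The derivative $\widetilde S_\xi$ is unchanged, so the origin is not a real issue.

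\textbf{Step 2: $\partial_2$ is the Lagrangian time derivative.} The map $(r,t)\mapsto(\xi,t)$ is a $C^1$ change of variables because $\xi_r=r^m\rho>0$. For any function $f(r,t)=\tilde f(\xi(r,t),t)$, the chain rule gives
\begin{align*}
\pa_2 f=\tilde f_\xi\,\pa_2\xi+\tilde f_t=\tilde f_t .
\end{align*}
Applying this to $S$ and using the third equation of \eqref{1.1}, we obtain $\widetilde S_t(\xi,t)=\pa_2S=0$. Hence $\widetilde S(\xi,t)=\widetilde S(\xi,0)$, and so $\widetilde S_t\equiv0$ as a function of $(\xi,t)$.

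\textbf{Step 3: the $\xi$-derivatives.} Differentiating $\widetilde S_t\equiv0$ in $\xi$ once and twice gives $(\widetilde S_\xi)_t=0$ and $(\widetilde S_{\xi\xi})_t=0$; mixed partials commute because $\widetilde S$ inherits regularity from $S_0\in C^2$. Applying Step 2 to $f=\widetilde S_\xi$ and $f=\widetilde S_{\xi\xi}$, viewed as functions of $(r,t)$, yields \eqref{4.1}. For $S$ only $C^2$, the commutation can alternatively be justified directly: $\widetilde S(\xi,t)=S_0(r_0(\xi))$, where $r_0$ is the inverse of the initial mass map.

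\textbf{Main obstacle.} The main obstacle is the justification of $\xi_t=-r^m\rho u$, that is, controlling the boundary term at the origin or replacing it by a 2-characteristic base point. I would handle it via the base-point replacement just described.
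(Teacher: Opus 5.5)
Your proposal is correct and follows the paper's own argument. The paper asserts $\widetilde S_t=\pa_2 S=0$ directly from the definition of the Lagrangian coordinate and then writes $\pa_2\widetilde S_\xi=(\widetilde S_t)_\xi=0$ and $\pa_2\widetilde S_{\xi\xi}=(\widetilde S_t)_{\xi\xi}=0$, whereas you supply the step it leaves implicit, $\pa_2\xi=\xi_t+u\xi_r=0$ via the mass equation; your further remark that $\widetilde S(\xi,t)=\widetilde S(\xi,0)$ gives the result without commuting mixed partials (needing only $S_0\in C^2$) usefully tightens the paper's regularity bookkeeping.
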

\begin{proof}
According to the last equation of \eqref{2.1} and
the definition of the Lagrangian coordinate variable $\xi$, we know that
$$
\widetilde{S}_t=\pa_2S=0,
$$
from which one has
\begin{align*}
\pa_2\widetilde{S}_\xi=(\widetilde{S}_\xi)_t=(\widetilde{S}_t)_\xi=0,
\end{align*}
and
\begin{align*}
\pa_2\widetilde{S}_{\xi\xi}=(\widetilde{S}_{\xi\xi})_t=(\widetilde{S}_t)_{\xi\xi}=0.
\end{align*}
The proof of the lemma is complete.
\end{proof}
\begin{rem}\label{r5}
In view of Lemma \ref{lem2} and Remarks \ref{r1} and \ref{r2}, we can directly obtain, in the domain $\Omega_d$
\begin{align}\label{4.2}
|S(r,t)|\leq \mathcal{S}_0,\quad 0\leq \gamma_k r_{d}^{m+1}e^{\fr{\mathcal{S}_0}{c_\gamma}}h_{d}^{\fr{2}{\gamma-1}}\widetilde{S}_\xi\leq \fr{mc_v\gamma}{3},
\end{align}
if Assumption \ref{asu2} holds, and
\begin{align}\label{4.3}
-\mathcal{S}_2\leq \widetilde{S}_{\xi\xi}\leq0,
\end{align}
if Assumption \ref{asu3} holds.
\end{rem}

\begin{lem}\label{lem3}
Let Assumptions \ref{asu1} and \ref{asu2} hold. Then any smooth solution $(\rho, u, S)$ of \eqref{1.1} with \eqref{1.2} in the domain $\Omega_d$ satisfies
\begin{align}\label{4.4}
0<z(r,t)\leq w(r,t)<\tilde{w}.
\end{align}
\end{lem}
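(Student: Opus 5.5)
The plan is to get an invariant-domain argument for $(w,z)$ from the characteristic equations \eqref{2.9}, closed by a continuity (bootstrap) argument in time. First I factor the right-hand sides of \eqref{2.9} by $w-z=\frac{4}{\gamma-1}h>0$. Set
$$
\mathcal{K}:=\frac{\gamma_k}{2mc_v\gamma}r^{m+1}e^{-\fr{S}{c_\gamma}}h^{\fr{2}{\gamma-1}}\widetilde{S}_\xi .
$$
Then
\begin{align*}
\pa_3 w=\frac{m(\gamma-1)}{8r}(w-z)\big[\mathcal{K}(w-z)-(w+z)\big],\qquad
\pa_1 z=\frac{m(\gamma-1)}{8r}(w-z)\big[\mathcal{K}(w-z)+(w+z)\big].
\end{align*}
By Remark \ref{r5}, $\widetilde{S}_\xi\ge0$ and $|S|\le\mathcal{S}_0$ hold throughout $\Omega_d$. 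Hence $\mathcal{K}\ge0$, and $\pa_1 z\ge0$ whenever $w+z\ge0$.

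For the sign of $\pa_3 w$, I need an upper bound on $\mathcal{K}$. Suppose we already know $r\le r_d$ and $h\le h_d$ at the point in question. Then \eqref{4.2} gives
$$
\mathcal{K}\le \frac{\gamma_k}{2mc_v\gamma}r_d^{m+1}e^{\fr{\mathcal{S}_0}{c_\gamma}}h_d^{\fr{2}{\gamma-1}}\widetilde{S}_\xi\le\frac16 .
$$
Consequently, if $z>0$ then $\mathcal{K}(w-z)\le\frac16(w-z)<w+z$, so $\pa_3 w<0$.

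Both auxiliary bounds follow from the conclusion we are proving. If $0<z<w<\tilde w$, then $h=\frac{\gamma-1}{4}(w-z)<\frac{\gamma-1}{4}\tilde w=h_d$. Moreover, since $1<\gamma<3$, every characteristic speed satisfies $c_i\le u+h\le u+\frac{2}{\gamma-1}h=w<\tilde w$. Because $\Omega_d$ is bounded by characteristics issuing from $[b_1,b_2]$ and $t\le T_0$, this gives $r\le b_2+\tilde wT_0=r_d$ in $\Omega_d$; in addition $r\ge b_1>0$ there, since $c_1>0$.

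Next I run the continuity argument. At $t=0$ we have $\tilde z\le z_0\le w_0\le\tilde w$ and $\tilde z>0$ by Assumption \ref{asu1}, while $w_0-z_0=\frac{4}{\gamma-1}h_0>0$. Let $t^*$ be the supremum of times $\tau$ such that $z>0$ and $w\le\tilde w$ on $\Omega_d\cap\{t\le\tau\}$. On that region the computation above gives $\pa_3 w\le 0$ and $\pa_1 z\ge0$. Through any point of $\Omega_d$, the backward 3-characteristic and the backward 1-characteristic stay in $\Omega_d$ (this is the defining property of the characteristic triangle or quadrangle) and reach $[b_1,b_2]$. Integrating along them yields $w\le \max w_0=\tilde w$ and $z\ge\tilde z>0$. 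These bounds, combined with continuity, push past $t^*$, so $t^*$ equals the full time extent of $\Omega_d$.

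For the strict inequality $w<\tilde w$ at $t>0$, I use that $\pa_3 w<0$ strictly, because $w-z>0$ and $\mathcal{K}(w-z)<w+z$. Hence $w$ strictly decreases along 3-characteristics, which gives $w<\tilde w$ for $t>0$. The remaining inequality $z\le w$ is immediate from $h>0$.

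The main obstacle is the circularity: the bound $\mathcal{K}\le\frac16$, which is the coefficient estimate that makes $\pa_3w$ negative, uses $h<h_d$ and $r\le r_d$, and these in turn rely on the bounds for $(w,z)$ being proved. I would handle this by phrasing the argument as a first-exit-time contradiction. At the first time where $z=\tilde z/2$, say, or $w=\tilde w+\varepsilon$, all bounds still hold on the earlier part of $\Omega_d$. The monotonicity along the backward characteristics then contradicts the exit, and finally I let $\varepsilon\to0$.
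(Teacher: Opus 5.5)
Your proposal is correct and follows essentially the same route as the paper. Both arguments use \eqref{2.9}, the sign $\widetilde{S}_\xi\ge0$ to get $\pa_1 z\ge0$, and the bound $\mathcal{K}\le\frac16$ (from $r\le r_d$, $h\le h_d$, $|S|\le\mathcal{S}_0$ and \eqref{4.2}) to get $\pa_3 w\le0$, closed by a first-time argument along backward characteristics. Your $\varepsilon$-slack bootstrap is slightly more careful than the paper's mean-value version: it handles the circularity explicitly and claims $w<\tilde w$ only for $t>0$ (the paper's assertion $w(r,0)<\tilde w$ fails where $w_0$ attains its maximum), and it also yields the stronger bound $z\ge\tilde z$.
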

\begin{rem}\label{r-a}
The inequality $z(r,t)\leq w(r,t)$ will be improved to $z(r,t)< w(r,t)$ by establishing the positive lower bound of $h$ later.
\end{rem}
\begin{proof}
The inequality $z\leq w$ follows from their definitions in \eqref{2.3}. We next show the inequality $z>0$ in $\Omega_d$.
Thanks to Assumption \ref{asu1}, we see that $0<z(r,0)\leq w(r,0)< \tilde{w}$ on $[b_1,b_2]$.
Assume that there exists a point $(r',t')\in\Omega_d$ such that $z(r',t')=0$ and $z(r,t)>0$ in $\Omega_d\cap\{t<t'\}$. This means that $t'$ is the first time such that $z=0$. We draw the $1$-characteristic $r=r_1(t)$ through the point $(r',t')$ up to the line $t=0$. Then, by the fact $z(r_1(0),0)>0$, there exists at least a point $(r_1(t''),t'')$ with $t''\leq t'$ on the curve $r=r_1(t)$ such that
\begin{align}\label{4.5}
\pa_1z(r_1(t''),t'')<0.
\end{align}
On the other hand, it follows by \eqref{2.9} and the fact $\widetilde{S}_\xi\geq0$ in \eqref{4.2} that
\begin{align*}
\pa_1z\geq\frac{m(\gamma-1)}{8r} (w^2-z^2),
\end{align*}
which implies that
\begin{align}\label{4.6}
\pa_1z(r_1(t''),t'')\geq\frac{m(\gamma-1)}{8r_1(t'')} (w^2-z^2)(r_1(t''),t'')\geq0,
\end{align}
which contradicts \eqref{4.5}. Thus we obtain $z>0$ in $\Omega_d$.

We next show $w< \tilde{w}$ in $\Omega_d$. Assume that there exists a first time point $(r',t')\in\Omega_d$ such that $w(r',t')=\tilde{w}$. Then we draw the $3$-characteristic $r=r_3(t)$ through the point $(r',t')$ up to the line $t=0$. We know by the fact $w(r,0)<\tilde{w}$ that there exists a point $(r_3(t''),t'')$ with $t''\leq t'$ on the curve $r=r_3(t)$ such that
\begin{align}\label{4.7}
\pa_3w(r_3(t''),t'')>0.
\end{align}
On the other hand, one has
\begin{align*}
\fr{{\rm d}r_3(t)}{{\rm d}t}=& u(r_3(t),t)+h(r_3(t),t) \\
\leq& u(r_3(t),t) +\fr{2}{\gamma-1}h(r_3(t),t)= w(r_3(t),t)\leq\tilde{w},
\end{align*}
for $t\leq t''$, from which and \eqref{2.18} we acquire
\begin{align}\label{4.8}
r''\leq r_3(0)+\tilde{w}t''\leq b_2+\tilde{w}T_0=r_d.
\end{align}
Combining \eqref{4.2} and \eqref{4.8} yields
\begin{align}\label{4.9}
&\frac{\gamma_k}{2m c_v\gamma}r^{m+1}e^{-\fr{S}{c_\gamma}}h^{\fr{2}{\gamma-1}}\widetilde{S}_\xi\Big|_{(r'',t'')} \notag \\
\leq &\fr{1}{6}\bigg(\fr{r''}{r_d}\bigg)^{m+1}\bigg(\fr{h(r'',t'')}{h_d}\bigg)^{\fr{2}{\gamma-1}} e^{-\fr{S(r'',t'')+\mathcal{S}_0}{c_\gamma}}\leq \fr{1}{6}.
\end{align}
Here we used the fact $h(r'',t'')=\fr{\gamma-1}{4}(w-z)(r'',t'')<\fr{\gamma-1}{4}w(r'',t'')= h_d$.
Inserting \eqref{4.9} into the first equation of \eqref{2.9} gains
\begin{align}\label{4.10}
\pa_3w(r'',t'')\leq \frac{m(\gamma-1)}{8r''} \bigg\{\fr{1}{6}(w-z)^2 - (w^2-z^2)\bigg\}(r'',t'')\leq0,
\end{align}
a contradiction with \eqref{4.7}. This completes the proof of the lemma.
\end{proof}
\begin{rem}\label{r6}
From Lemma \ref{lem3}, we conclude that in the domain $\Omega_d$
\begin{equation}\label{4.11}
h<h_d,\qquad u<\tilde{w}, \qquad r<r_d,
\end{equation}
and
\begin{equation}\label{4.12}
z<w,\qquad c_1 > \frac{3-\gamma}{\gamma-1} h>0,\quad \fr{h}{c_1}< \fr{\gamma-1}{3-\gamma},\quad \fr{u}{c_1}=1+\fr{h}{c_1}<\fr{2}{3-\gamma},
\end{equation}
provided that $h>0$. Moreover, one has by \eqref{4.2}, \eqref{4.11} and \eqref{a1}
\begin{equation}\label{a2}
0\leq r^m\rho \widetilde{S}_\xi\leq \fr{mc_v\gamma}{3r_d}.
\end{equation}
\end{rem}
\begin{lem}\label{lem4}
Let Assumptions \ref{asu1}-\ref{asu3} hold. For the smooth solution $(\rho, u, S)$ of \eqref{1.1}, \eqref{1.2} with $\rho>0$, the terms $A_i$ and $B_i$ (i=1,2,3) defined in \eqref{3.2} and \eqref{3.3} satisfy
\begin{equation}\label{4.13}
A_i > 0, \qquad  B_i > 0,\qquad B_1-B_2>0,\qquad A_1-A_2>0.
\end{equation}
In addition, there hold
\begin{equation}\label{a3}
A_3,B_3\leq \fr{\gamma m^2\tilde{w}^2(\gamma-1)^2}{3b_1r_d(3-\gamma)^2} +\fr{\gamma_{k}^2(\gamma-1)\tilde{w}}{c_v\gamma(3-\gamma)}r_{d}^{2m}h_{d}^{\fr{\gamma+3}{\gamma-1}} e^{\fr{2\mathcal{S}_0}{c_\gamma}}\mathcal{S}_2=:L.
\end{equation}
\end{lem}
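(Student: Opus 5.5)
The plan is to reduce every sign condition in \eqref{4.13} to an elementary inequality in the ratio $s=h/u$. Three facts available in $\Omega_d$ do all the work.

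\begin{itemize}
\item[(a)] By Lemma \ref{lem3} and Remark \ref{r6}, $z>0$. This gives $u>\fr{2}{\gamma-1}h$, hence $s\in\big(0,\fr{\gamma-1}{2}\big)$, together with $c_1>0$, $c_3>0$ and $\fr{\gamma-1}{2}u^2-h^2>\fr{2}{\gamma-1}h^2-h^2>0$ for $1<\gamma<3$.
\item[(b)] By \eqref{a2} and $r<r_d$, $0\le r^m\rho\widetilde S_\xi\le \fr{mc_v\gamma}{3r_d}\le\fr{mc_v\gamma}{3r}$. So every entropy term is dominated by $\fr{m}{r}$ times a rational function of $(u,h)$ carrying the factor $\fr13$.
\item[(c)] Since $c_1>0$, both $1$- and $3$-characteristics move outward, so $r\ge b_1$ in $\Omega_d$. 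Also $\widetilde S_{\xi\xi}\in[-\mathcal S_2,0]$ by \eqref{4.3}.
\end{itemize}

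I would first treat the differences, because there the entropy coefficients combine nicely. For $B_1-B_2$, expanding $c_1(3\gamma u^2+(\gamma+5)uh-h^2)-c_3(3\gamma u^2-(\gamma+3)uh-3h^2)=4h[(2-\gamma)u^2+h^2]$ gives
\begin{align*}
B_1-B_2=\fr{(3-\gamma)mu^2h^2}{rc_1^2c_3}-\fr{h^2[(2-\gamma)u^2+h^2]}{\gamma c_vc_1^2c_3}r^m\rho\widetilde S_\xi .
\end{align*}
If $(2-\gamma)u^2+h^2\le0$ this is trivially positive. Otherwise (b) bounds the second term by $\fr{mh^2[(2-\gamma)u^2+h^2]}{3rc_1^2c_3}$, so positivity reduces to $(7-2\gamma)u^2>h^2$, which holds since $h<u$. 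The computation for $A_1-A_2$ is symmetric, with $c_1\leftrightarrow c_3$ and the sign of the $uh$ terms flipped. It should produce the analogous bracket, and I would handle it in the same way. Once this is done, it suffices to prove $B_2>0$ and $A_2>0$, since then $B_1>B_2>0$ and $A_1>A_2>0$.

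For $B_2$, the entropy term is nonpositive only when $3\gamma u^2-(\gamma+3)uh-3h^2>0$. In that case (b) reduces $B_2>0$ to the cubic inequality
\begin{align*}
6(1+s)\Big(\fr{\gamma-1}{2}-s^2\Big)>s\big(3\gamma-(\gamma+3)s-3s^2\big),\qquad s\in\Big(0,\fr{\gamma-1}{2}\Big).
\end{align*}
I expect this polynomial check to be the main obstacle. Near the endpoint $s=\fr{\gamma-1}{2}$ the leading terms $3(\gamma-1)$ and $3\gamma s$ are comparable, so the lower-order terms have to be kept. My first attempt would be to rewrite the inequality as $P(s)=3(\gamma-1)+3(\gamma-1)s-3\gamma s-6s^2+(\gamma+3)s^2-6s^3+3s^3>0$. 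I would then show $P$ is monotone on the interval, or simply evaluate it at both endpoints and use convexity, again exploiting $\gamma<3$. The case $A_2$ needs no such argument: for $\gamma\ge1$ the entropy coefficient $3\gamma u^2+(\gamma+3)uh-3h^2$ is positive, so both terms of $A_2$ are nonnegative and the first is strictly positive.

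For $A_3$ and $B_3$, positivity is immediate. By (b), $mc_v\gamma-r^{m+1}\rho\widetilde S_\xi\ge mc_v\gamma\big(1-\fr{r}{3r_d}\big)>0$, and the $\widetilde S_{\xi\xi}$ term is nonnegative by (c). This gives $A_3,B_3\ge0$, and strictly positive where $\widetilde S_\xi>0$. For the upper bound $L$ I would treat the two terms separately.
\begin{itemize}
\item In the first term I bound $r^{m+1}\rho\widetilde S_\xi$ below by $0$, $r^m\rho\widetilde S_\xi$ above by $\fr{mc_v\gamma}{3r_d}$, and $\fr1r$ by $\fr1{b_1}$. I control $\fr{u}{c_1}<\fr{2}{3-\gamma}$ and $\fr{h}{c_1}<\fr{\gamma-1}{3-\gamma}$ via \eqref{4.12}, and use $c_3>u$, $h<h_d$, $u<\tilde w$ to reach the first summand of $L$.
\item In the second term I substitute $\rho$ from \eqref{a1}, using $|S|\le\mathcal S_0$, $h<h_d$ and $r<r_d$. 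With $\fr{uh^2}{c_1}<\fr{\gamma-1}{3-\gamma}\tilde w h_d$ and $|\widetilde S_{\xi\xi}|\le\mathcal S_2$, this gives the second summand of $L$.
\end{itemize}
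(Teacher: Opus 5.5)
Your proposal is correct and follows essentially the paper's route. The matching steps are:
\begin{itemize}
\item the same closed form for $B_1-B_2$, and its $h\mapsto -h$ mirror for $A_1-A_2$;
\item the bound $r^m\rho\widetilde S_\xi\le \frac{mc_v\gamma}{3r_d}$ together with $z>0$ to get $B_2>0$;
\item the same termwise bounds producing $L$.
\end{itemize}
The step you flagged as the main obstacle is immediate: your cubic simplifies to $P(s)=3(\gamma-1)-3s-(3-\gamma)s^2-3s^3$, which is decreasing on $s>0$, and $P(\frac{\gamma-1}{2})=\frac{(\gamma-1)(3-\gamma)(\gamma+5)}{8}>0$.

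Your conclusion $A_3,B_3\ge 0$ rather than $>0$ is in fact the accurate one. The paper's argument also only yields $\ge 0$, and for constant entropy $A_3=B_3\equiv 0$, so the strict inequality for $i=3$ in the statement does not hold in general.
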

\begin{proof}
By means of \eqref{4.11} and \eqref{a2}, we first obtain
\begin{align}
mc_v\gamma -r^{m+1}\rho \widetilde{S}_\xi
\geq mc_v\gamma-r\fr{mc_v\gamma}{3r_d}\geq mc_v\gamma-\fr{mc_v\gamma}{3}=\fr{2mc_v\gamma}{3}>0, \notag
\end{align}
from which and \eqref{4.3} we see by the expressions of $A_3$ and $B_3$ that
$$
A_3>0,\qquad B_3>0.
$$

Note by the fact $z>0$ that
$$
\fr{\gamma-1}{2}u^2-h^2>0,\qquad [3\gamma u^2+(\gamma+3)uh-3h^2]>0,
$$
which together with \eqref{a2} and the expression of $A_2$ yields $A_2>0$. We now discuss the term $B_2$. It follows by \eqref{3.2}, \eqref{4.12} and \eqref{a2} that
\begin{align}\label{4.15}
B_{2} \geq & \fr{mc_3}{2rc_{1}^2}\Big(\fr{\gamma-1}{2}u^2-h^2\Big) -\fr{h[3\gamma u^2-(\gamma+3)uh-3h^2]}{4 c_{1}^2}\cdot \fr{m}{3r_d} \notag \\
\geq & \fr{m}{12r_dc_{1}^2}\bigg\{6(u+h)\Big(\fr{\gamma-1}{2}u^2-h^2\Big) -h[3\gamma u^2-(\gamma+3)uh-3h^2]\bigg\} \notag \\
\geq & \fr{mh}{12r_dc_{1}^2}\bigg\{6\fr{\gamma+1}{\gamma-1}\Big(\fr{\gamma-1}{2}u^2-h^2\Big) - [3\gamma u^2-(\gamma+3)uh-3h^2]\bigg\} \notag \\
= & \fr{mh}{12r_dc_{1}^2}\bigg\{3u^2-\fr{3(\gamma+3)}{\gamma-1}h^2 +(\gamma+3)uh\bigg\} \notag \\
\geq & \fr{mh^3}{12r_dc_{1}^2}\bigg\{\fr{12}{(\gamma-1)^2}-\fr{3(\gamma+3)}{\gamma-1}  +(\gamma+3)\fr{2}{\gamma-1}\bigg\} \notag \\
= & \fr{mh^3}{12r_dc_{1}^2}\cdot\fr{12-(\gamma+3)(\gamma-1)}{(\gamma-1)^2}>0,
\end{align}
by $1<\gamma<3$.

Next we show the inequality $B_1-B_2>0$, which also leads $B_1>0$ by \eqref{4.15}. According to the expressions of $B_1$ and $B_2$ in \eqref{3.2}, we directly calculate by \eqref{a2} and \eqref{4.4}
\begin{align}\label{4.16}
B_1-B_{2}=&\fr{3-\gamma}{r}\fr{mu^2h^2}{c_{1}^2c_3}+\fr{hr^m\rho \widetilde{S}_\xi}{4c_v\gamma c_1}\bigg(\fr{3\gamma u^2-(\gamma+3)uh-3h^2}{c_1} -\fr{3\gamma u^2+(\gamma+5)uh-h^2}{c_3}\bigg) \notag \\
=&\fr{3-\gamma}{r}\fr{mu^2h^2}{c_{1}^2c_3}+\fr{hr^m\rho \widetilde{S}_\xi}{4c_v\gamma c_1}\cdot\fr{4h[(\gamma-3)u^2+(u^2-h^2)]}{c_1c_3} \notag \\
>&\fr{3-\gamma}{r}\fr{mu^2h^2}{c_{1}^2c_3}-\fr{hm}{12 r_d c_1}\cdot\fr{4h(3-\gamma)u^2}{c_1c_3}> \fr{2(3-\gamma)mu^2h^2}{3r_dc_{1}^2c_3}>0.
\end{align}
Similarly, one has
\begin{align}\label{4.17}
A_1-A_{2}
=&\fr{3-\gamma}{r}\fr{mu^2h^2}{c_{1}c_{3}^2}+\fr{hr^m\rho \widetilde{S}_\xi}{4c_v\gamma c_3}\cdot\fr{4h[(\gamma-3)u^2+(u^2-h^2)]}{c_1c_3} \notag \\
>&  \fr{2(3-\gamma)mu^2h^2}{3r_dc_{1}c_{3}^2}>0.
\end{align}

Finally, we derive the upper bounds of $A_3$ and $B_3$. In the light of their expressions in \eqref{3.2} and \eqref{3.3}, we find by \eqref{4.11}-\eqref{a2} and \eqref{4.3} that
\begin{align}\label{a5}
A_3,B_{3}\leq & \fr{u^3h^2}{\gamma c_{v}^2 r c_{1}^2 c_{3}}mc_v\gamma \cdot r^m\rho \widetilde{S}_\xi +\fr{uh^2}{\gamma c_v c_1}r^{2m}\rho^2|\widetilde{S}_{\xi\xi}| \notag \\
\leq & \fr{mu^2}{c_{v} r} \Big(\fr{u}{c_3}\Big)\Big(\fr{h^2}{c_{1}^2}\Big)\cdot\fr{mc_v\gamma}{3r_d} +\fr{uh}{\gamma c_v }\Big(\fr{h}{c_1}\Big)r^{2m}\gamma_{k}^2h^{\fr{4}{\gamma-1}}e^{-\fr{2S}{c_\gamma}}\mathcal{S}_2  \notag \\
\leq & \fr{m\tilde{w}^2}{c_{v} b_1}\Big(\fr{\gamma-1}{3-\gamma}\Big)^2\cdot\fr{mc_v\gamma}{3r_d} +\fr{\tilde{w}h_d}{\gamma c_v }\Big(\fr{\gamma-1}{3-\gamma}\Big)r_{d}^{2m}\gamma_{k}^2h_{d}^{\fr{4}{\gamma-1}} e^{\fr{2\mathcal{S}_0}{c_\gamma}}\mathcal{S}_2=L.
\end{align}
The proof of the lemma is complete.
\end{proof}

We next establish the invariant domains of $(\alpha,\beta)$.
\begin{lem}\label{lem5}
Let the conditions of Lemma \ref{lem4} hold. We further suppose that Assumption \ref{asu4} be satisfied. Then, in the domain $\Omega_d$, the smooth solution $(\rho, u, S)$ of \eqref{1.1} with \eqref{1.2} fulfils
\begin{align}\label{4.18}
\alpha(r,t)\geq0,\quad \beta(r,t)\geq0,\quad \alpha(r,t)< \widetilde{\mathcal{C}}_1,\quad \beta(r,t)< \widetilde{\mathcal{C}}_1,
\end{align}
where
$$
\widetilde{\mathcal{C}}_1=\max\{\mathcal{C}_1+1, \ \sqrt{2L}\}.
$$
Here the constant $\mathcal{C}_1$ and $L$ are given in \eqref{2.24} and \eqref{a3}, respectively.
\end{lem}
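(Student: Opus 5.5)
The plan is a continuity (first-time-of-violation) argument along characteristics, applied to the Riccati system \eqref{3.1}. Throughout, Lemma \ref{lem3}, Remark \ref{r6} and Lemma \ref{lem4} apply in $\Omega_d$ as long as the solution is smooth with $h>0$. These give $A_i,B_i>0$, $A_1>A_2$, $B_1>B_2$ and $0<A_3,B_3\le L$. I would prove the lower bounds and the upper bounds separately, each by contradiction at a first time.

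\emph{Lower bounds.} Suppose $t'$ is the first time at which $\min(\alpha,\beta)$ reaches a negative value somewhere; it is cleaner to phrase this as the first time at which one of $\alpha,\beta$ equals $0$ and then becomes negative. Say $\beta(r',t')=0$ while $\alpha,\beta\ge0$ for $t<t'$. Along the 1-characteristic through $(r',t')$, the $\beta$-equation in \eqref{3.1} reads $\pa_1\beta=-\beta\big(\frac{\gamma+1}{4}\beta+\frac{3-\gamma}{4}\alpha+B_1\big)+B_2\alpha+B_3$. Here $B_2\alpha\ge0$ and $B_3>0$. So, with $K(t)=\frac{\gamma+1}{4}\beta+\frac{3-\gamma}{4}\alpha+B_1$ bounded on the compact characteristic segment, we get $\pa_1\beta\ge -K\beta$. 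Gronwall then gives $\beta(r',t')\ge\beta_0\exp(-\int K)\ge0$. To get strict positivity and rule out crossing, I would use $B_3>0$: at a point where $\beta=0$ and $\alpha\ge0$ we have $\pa_1\beta\ge B_3>0$, so $\beta$ cannot cross zero downward. The case where $\alpha$ hits zero first is identical, using the $\alpha$-equation along 3-characteristics with $A_2\beta+A_3>0$. If the initial data have $\beta_0=0$ at some point, the Gronwall form handles it.

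\emph{Upper bounds.} Let $t'$ be the first time at which, say, $\beta=\widetilde{\mathcal C}_1$ at some point, while $0\le\alpha,\beta\le\widetilde{\mathcal C}_1$ before. Since $\beta_0\le\mathcal C_1<\widetilde{\mathcal C}_1$, there is an earlier point on the 1-characteristic where $\beta$ equals $\widetilde{\mathcal C}_1$ or is increasing near that level, so $\pa_1\beta\ge0$ there. I would show instead that $\pa_1\beta<0$ whenever $\beta=\widetilde{\mathcal C}_1$ and $0\le\alpha\le\widetilde{\mathcal C}_1$. Using $B_1>B_2$ and $\alpha\le\beta$ at that point,
\[
-\tfrac{3-\gamma}{4}\alpha\beta-B_1\beta+B_2\alpha\le \alpha(B_2-B_1)\le0,
\]
so $\pa_1\beta\le-\frac{\gamma+1}{4}\widetilde{\mathcal C}_1^2+L$. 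Since $\widetilde{\mathcal C}_1^2\ge 2L$ and $\frac{\gamma+1}{4}>\frac12$, this is $<0$, giving the contradiction. The $\alpha$ bound uses $A_1>A_2$ in the same way. Taking $\widetilde{\mathcal C}_1\ge\mathcal C_1+1$ keeps the initial data strictly inside the domain.

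\emph{Main obstacle.} The main difficulty is that Lemma \ref{lem4} and Remark \ref{r6} require $h>0$, i.e.\ $\rho>0$, together with smoothness. The invariant-domain argument must therefore be run jointly with the continuation argument: before the first exit time, $h>0$ is assumed and the coefficients have the stated signs. I must also make sure that the first hitting point's backward characteristic stays in $\Omega_d$, which holds by the construction of $\Omega_d$ as a domain of determinacy. The passage from the first hitting time to a derivative sign is the standard argument already used in Lemma \ref{lem3}, and I would mimic it verbatim.
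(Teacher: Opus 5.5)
Your proposal is correct and is essentially the paper's proof. Nonnegativity comes from evaluating \eqref{3.1} at a zero of $\beta$ (resp.\ $\alpha$) with the other variable nonnegative, where $\pa_1\beta=B_2\alpha+B_3>0$ (resp.\ $\pa_3\alpha=A_2\beta+A_3>0$); the upper bounds come from the first time $\beta$ (or $\alpha$) reaches $\widetilde{\mathcal C}_1$, where $B_1>B_2>0$ and $0\le\alpha\le\beta$ give $\pa_1\beta\le-\fr{\gamma+1}{4}\widetilde{\mathcal C}_1^2+L<0$. Your first-time formulation of the lower-bound step is tidier than the paper's nested back-tracing, which tacitly assumes $\alpha\ge0$ at the zero of $\beta$.

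One caveat comes from Lemma~\ref{lem4} (the paper's proof relies on it too): its formulas only give $A_3,B_3\ge0$, not strict positivity. Both vanish where $\widetilde S_\xi=\widetilde S_{\xi\xi}=0$, e.g.\ for constant entropy, which Assumptions~\ref{asu2} and~\ref{asu3} permit. To make your non-crossing step airtight, apply it to $\alpha+\epsilon e^{t}$ and $\beta+\epsilon e^{t}$ with $0<\epsilon<e^{-T_0}$. At a zero of either shifted variable (the other being nonnegative), the derivative along the corresponding characteristic is at least $\epsilon e^{t}(1-\epsilon e^{t})>0$, using $A_1>A_2>0$, $B_1>B_2>0$ and $A_3,B_3\ge0$. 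Then let $\epsilon\to0$.
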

\begin{proof}
We still apply the contradiction argument to show the lemma. Suppose there exists a point $(r',t')$ in $\Omega_d$ such that $\alpha(r', t')<0$. We draw the 3-characteristic $r=r_3(t)$ through the point $(r',t')$ up to the line $t=0$. Since $\alpha(r_3(0),0)\geq0$, there exists a point $(r_3(t''),t'')$ such that $\alpha(r_3(t''),t'')=0$ and
\begin{align}\label{4.19}
\pa_3\alpha(r_3(t''),t'')\leq0.
\end{align}
If $\beta(r_3(t''),t'')\geq0$, we utilize \eqref{3.1} and \eqref{4.13} to achieve
\begin{align*}
\pa_3 \alpha(r_3(t''),t'') =& \bigg\{ -\dfrac{\gamma+1}{4} \alpha^{2}
  -\dfrac{3-\gamma}{4} \alpha \beta-A_{1}\alpha+ A_{2} \beta +A_3\bigg\}(r_3(t''),t'') \\
  =& (A_{2} \beta +A_3)(r_3(t''),t'')>0,
\end{align*}
which contradicts \eqref{4.19}. If $\beta(r_3(t''),t'')<0$, we then draw the 1-characteristic $r=r_1(t)$ through the point $(r'',t'')$ up to the line $t=0$. Hence there exists a point $(r_3(t'''),t''')$ such that $\beta(r_1(t'''),t''')=0$, $\alpha(r_1(t'''),t''')\geq0$ and
\begin{align}\label{4.20}
\pa_1\beta(r_1(t'''),t''')\leq0.
\end{align}
Making use of \eqref{3.1} and \eqref{4.13} again gives
\begin{align*}
\pa_1 \beta(r_1(t'''),t''') =& \bigg\{-\dfrac{\gamma+1}{4} \beta^{2}
  -\dfrac{3-\gamma}{4} \alpha \beta-B_{1}\beta+ B_{2} \alpha +B_3\bigg\}(r_1(t'''),t''') \\
  =&(B_{2} \alpha +B_3)(r_1(t'''),t''')>0,
\end{align*}
a contradiction with \eqref{4.20}. Thus we have verified that $\alpha(r,t)\geq0$ and $\beta(r,t)\geq0$ in the domain $\Omega_d$.

To show the upper bound inequalities of $(\alpha,\beta)$, we first rewrite \eqref{3.1} as the following form
\begin{align}\label{4.21}
\begin{split}
\pa_1 \beta =& -\dfrac{\gamma+1}{4} \beta^{2}
  -\dfrac{3-\gamma}{4} \alpha \beta-(B_{1}-B_2)\beta- B_{2} (\beta-\alpha) +B_3, \\[8pt]
\pa_3 \alpha =& -\dfrac{\gamma+1}{4} \alpha^{2}
  -\dfrac{3-\gamma}{4} \alpha \beta-(A_{1}-A_2)\alpha- A_{2} (\alpha-\beta) +A_3.
\end{split}
\end{align}
Without loss of generality, we suppose that there exists a first time point $(r',t')$ in $\Omega_d$ such that $\beta(r',t')=\widetilde{\mathcal{C}}_1$ and $\beta(r,t)<\widetilde{\mathcal{C}}_1$, $\alpha(r,t)\leq \widetilde{\mathcal{C}}_1$ for $(r,t)\in\Omega_d\cap\{t<t'\}$. Then one sees that
\begin{align}\label{4.22}
\pa_1\beta(r',t')\geq0.
\end{align}
On the other hand, we find by \eqref{4.21}, \eqref{4.13} and \eqref{a3} that
\begin{align*}
&\pa_1 \beta(r',t') = \bigg\{-\dfrac{\gamma+1}{4} \beta^{2}
  -\dfrac{3-\gamma}{4} \alpha \beta-(B_{1}-B_2)\beta- B_{2} (\beta-\alpha) +B_3\bigg\}(r',t') \\
  =&-\dfrac{\gamma+1}{4}\widetilde{\mathcal{C}}_{1}^2 -\dfrac{3-\gamma}{4}\widetilde{\mathcal{C}}_{1}\alpha(r',t') -(B_{1}-B_2)\widetilde{\mathcal{C}}_{1} -B_{2} [\widetilde{\mathcal{C}}_{1}-\alpha(r',t')] +B_3(r',t') \\
  \leq & -\dfrac{\gamma+1}{4}\widetilde{\mathcal{C}}_{1}^2 +L\leq\fr{1-\gamma}{2}L<0,
\end{align*}
which contradicts \eqref{4.22}. Here we used the previous conclusion $\alpha\geq0$.
The proof of the lemma is finished.
\end{proof}

Note that the proof of the upper bounds of $(\alpha, \beta)$ in Lemma \ref{lem5} relies on the non-negativity of $\alpha(r,0)$ and $\beta(r,0)$, which is invalid in the proof of singularity formation later, as the initial data at that time is negative at some point. Therefore, we need to re-establish the upper bounds of $(\alpha, \beta)$ in the new scene. We first derive the equations for the weighted variables of $(\alpha, \beta)$.

\begin{lem}\label{lem6}
Let $\lambda>0$ be any real number. For smooth solution of \eqref{1.1} with \eqref{1.2}, the
weighted variables $h^{-\lambda}\alpha$ and $h^{-\lambda}\beta$ satisfy the following equations
\begin{align}\label{4.23}
\begin{split}
& \pa_1 (h^{-\lambda}\beta) =-\dfrac{\gamma+1}{4}h^{\lambda} (h^{-\lambda}\beta)^{2}
  +\Big(\fr{\gamma-1}{2}\lambda-\dfrac{3-\gamma}{4}\Big) h^{\lambda}(h^{-\lambda}\alpha) (h^{-\lambda}\beta) \\
  &\ +\Big(-B_{1}+\fr{h(\gamma u+h)}{2\gamma c_v c_3}r^m\rho \widetilde{S}_\xi \lambda +\fr{(\gamma-1)mu^2}{2rc_3}\lambda\Big)(h^{-\lambda}\beta)+ B_{2} (h^{-\lambda}\alpha) +h^{-\lambda}B_3, \\[4pt]
& \pa_3 (h^{-\lambda}\alpha) =-\dfrac{\gamma+1}{4}h^{\lambda} (h^{-\lambda}\alpha)^{2}
  +\Big(\fr{\gamma-1}{2}\lambda-\dfrac{3-\gamma}{4}\Big) h^{\lambda}(h^{-\lambda}\alpha) (h^{-\lambda}\beta)\\
  &\ +\Big(-A_{1}-\fr{h(\gamma u-h)}{2\gamma c_v c_1}r^m\rho \widetilde{S}_\xi \lambda +\fr{(\gamma-1)mu^2}{2rc_1}\lambda\Big)(h^{-\lambda}\alpha)+ A_{2} (h^{-\lambda}\beta) +h^{-\lambda}A_3.
\end{split}
\end{align}
\end{lem}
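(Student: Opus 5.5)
The identity is a direct product-rule computation: combine the Riccati equations \eqref{3.1} of Lemma \ref{lem1} with the transport equations \eqref{a4} for $h$. For the $\beta$-equation I will write
\[
\pa_1(h^{-\lambda}\beta)=h^{-\lambda}\pa_1\beta-\lambda h^{-\lambda-1}\beta\,\pa_1 h .
\]
Substituting the first line of \eqref{a4},
\[
\pa_1h=-\frac{\gamma-1}{2}h\alpha-\frac{h^2(\gamma u+h)}{2\gamma c_vc_3}r^m\rho\widetilde S_\xi-\frac{(\gamma-1)mu^2h}{2rc_3},
\]
the correction term becomes
\[
-\lambda h^{-\lambda-1}\beta\,\pa_1h=\lambda h^{-\lambda}\beta\Big(\frac{\gamma-1}{2}\alpha+\frac{h(\gamma u+h)}{2\gamma c_vc_3}r^m\rho\widetilde S_\xi+\frac{(\gamma-1)mu^2}{2rc_3}\Big).
\]

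Next I will substitute \eqref{3.1} into $h^{-\lambda}\pa_1\beta$ and rewrite every term through the weighted variables. The quadratic terms use the identities $h^{-\lambda}\beta^2=h^{\lambda}(h^{-\lambda}\beta)^2$ and $h^{-\lambda}\alpha\beta=h^{\lambda}(h^{-\lambda}\alpha)(h^{-\lambda}\beta)$. In the same way, the new term $\lambda\frac{\gamma-1}{2}h^{-\lambda}\alpha\beta$ becomes $\frac{\gamma-1}{2}\lambda\, h^{\lambda}(h^{-\lambda}\alpha)(h^{-\lambda}\beta)$. Adding it to $-\frac{3-\gamma}{4}$ gives the stated coefficient of the mixed term. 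The two remaining correction pieces are linear in $h^{-\lambda}\beta$ and add to $-B_1$. The terms $B_2h^{-\lambda}\alpha$ and $h^{-\lambda}B_3$ pass through unchanged. Together these give the first line of \eqref{4.23}.

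The $\alpha$-equation is handled the same way, using $\pa_3$ and the third line of \eqref{a4}:
\[
\pa_3h=-\frac{\gamma-1}{2}h\beta+\frac{h^2(\gamma u-h)}{2\gamma c_vc_1}r^m\rho\widetilde S_\xi-\frac{(\gamma-1)mu^2h}{2rc_1}.
\]
Then $-\lambda h^{-\lambda-1}\alpha\,\pa_3h$ contributes three pieces: $\frac{\gamma-1}{2}\lambda h^{\lambda}(h^{-\lambda}\alpha)(h^{-\lambda}\beta)$, $-\frac{h(\gamma u-h)}{2\gamma c_vc_1}r^m\rho\widetilde S_\xi\lambda(h^{-\lambda}\alpha)$, and $\frac{(\gamma-1)mu^2}{2rc_1}\lambda(h^{-\lambda}\alpha)$. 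These match the second line of \eqref{4.23}.

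There is no real obstacle. The only point needing care is that \eqref{a4} is correct as stated, and it is derived from \eqref{2.1} and \eqref{2.13}. If I wanted to verify it, I would compute $\pa_1h=h_t+(u-h)h_r=-hh_r-\frac{\gamma-1}{2}hu_r-\frac{\gamma-1}{2}\frac{muh}{r}$. I would then substitute $u_r$ and $h_r$ from \eqref{2.13} and $S_r=r^m\rho\widetilde S_\xi$ from \eqref{2.7}, and check that the $\beta$-terms cancel and that the $S_r$ and $m/r$ coefficients collapse to the stated forms; $\pa_3h$ is checked symmetrically. Everything is valid wherever $h>0$, $r>0$ and $c_1c_3\neq0$.
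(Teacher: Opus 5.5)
Your proposal is correct and follows essentially the same route as the paper. You expand $\pa_i(h^{-\lambda}\,\cdot)=h^{-\lambda}\pa_i(\cdot)-\lambda h^{-\lambda-1}(\cdot)\,\pa_i h$, substitute the Riccati equations \eqref{3.1} and the transport equations \eqref{a4} for $h$, and rewrite the quadratic terms in the weighted variables. Your optional check of \eqref{a4} from \eqref{2.1} and \eqref{2.13} also goes through: the $\beta$-terms cancel, and the $S_r$ and $m/r$ coefficients collapse to the stated forms.
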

\begin{proof}
We here only show the equation of $\alpha$, and the equation of $\beta$ can be obtained similarly. By \eqref{3.1} and \eqref{a4}, one directly calculates
\begin{align*}
&\pa_3 (h^{-\lambda}\alpha) =h^{-\lambda}\pa_3\alpha -\lambda h^{-\lambda-1}\alpha\pa_3h \\
=&h^{-\lambda}\bigg\{-\dfrac{\gamma+1}{4} \alpha^{2}
  -\frac{3-\gamma}{4} \alpha \beta-A_{1}\alpha+ A_{2} \beta +A_3\bigg\} \\
&\ -\lambda h^{-\lambda-1}\alpha\bigg\{-\frac{\gamma-1}{2}h\beta
    +\frac{h^2(\gamma u-h)}{2\gamma c_vc_1}r^m\rho \widetilde{S}_\xi
    -\frac{(\gamma-1)mu^2h}{2rc_1}\bigg\} \\
=&-\dfrac{\gamma+1}{4}h^{-\lambda} \alpha^{2} +\Big(\frac{\gamma-1}{2}\lambda -\frac{3-\gamma}{4}\Big)h^{-\lambda}\alpha \beta +A_2(h^{-\lambda}\beta) \\
&\ +\Big(-A_{1}-\fr{h(\gamma u-h)}{2\gamma c_v c_1}r^m\rho \widetilde{S}_\xi \lambda +\fr{(\gamma-1)mu^2}{2rc_1}\lambda\Big)(h^{-\lambda}\alpha) +h^{-\lambda}A_3.
\end{align*}
Putting the identities $\alpha=h^\lambda(h^{-\lambda}\alpha)$ and $\beta=h^\lambda(h^{-\lambda}\beta)$ into the above yields the desired equation of $h^{-\lambda}\alpha$. The proof of the lemma is completed.
\end{proof}

We have the weighted upper bounds of $(\alpha, \beta)$ without assuming that $\alpha(r,0)$ and $\beta(r,0)$ are non-negative.
\begin{lem}\label{lem7}
Let Assumptions \ref{asu1} and \ref{asu2} hold. The smooth solution $(\rho, u, S)$ of \eqref{1.1} with \eqref{1.2} satisfies
\begin{align}\label{4.24}
h^{-\fr{2}{\gamma-1}}\alpha(r,t)< \widetilde{\mathcal{C}}_2,\quad h^{-\fr{2}{\gamma-1}}\beta(r,t)< \widetilde{\mathcal{C}}_2,
\end{align}
in the domain $\Omega_d$, where
\begin{align}\label{4.25}
\widetilde{\mathcal{C}}_2=e^{MT_0} \max\bigg\{\fr{\gamma_k\mathcal{C}_1}{\dps\min_{r\in[b_1,b_2]}\rho_0(r)} e^{\fr{\mathcal{S}_0}{c_\gamma}},\ \widetilde{L}\bigg\},
\end{align}
and
\begin{align}\label{a6}
\begin{split}
M=&\fr{\gamma mh_d}{3(\gamma-1)r_d}+\fr{2m\tilde{w}}{(3-\gamma)b_1} +1, \\[4pt]
\widetilde{L}=&\fr{m^2\gamma \tilde{w}^2}{3r_{d}^2h_{d}^{\fr{2}{\gamma-1}}} \Big(\fr{\gamma-1}{3-\gamma}\Big)^2 +\fr{\gamma_{k}^2\tilde{w}}{\gamma c_v }\Big(\fr{\gamma-1}{3-\gamma}\Big)r_{d}^{2m}h_{d}^{\fr{\gamma+1}{\gamma-1}} e^{\fr{2\mathcal{S}_0}{c_\gamma}}\mathcal{S}_2.
\end{split}
\end{align}
\end{lem}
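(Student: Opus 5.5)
The plan is to apply Lemma \ref{lem6} with the specific weight $\lambda=\fr{2}{\gamma-1}$, and then run a first-touching-time (barrier) argument on the pair $X=h^{-\lambda}\alpha$, $Y=h^{-\lambda}\beta$, with a time-dependent barrier $\widetilde{\mathcal{C}}_2(t):=e^{Mt}\widetilde K$, where $\widetilde K=\max\{\gamma_k\mathcal{C}_1e^{\mathcal{S}_0/c_\gamma}/\min\rho_0,\ \widetilde L\}$. This barrier is dominated by the constant $\widetilde{\mathcal{C}}_2$ in \eqref{4.25}.

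\textbf{Step 1: the quadratic terms.} This choice of $\lambda$ makes the cross coefficient $\fr{\gamma-1}{2}\lambda-\fr{3-\gamma}{4}=\fr{\gamma+1}{4}$. The quadratic part of the $Y$-equation in \eqref{4.23} therefore becomes
$$\fr{\gamma+1}{4}h^{\lambda}Y(X-Y),$$
and symmetrically $\fr{\gamma+1}{4}h^\lambda X(Y-X)$ for $X$. At a first time where $Y=\widetilde{\mathcal{C}}_2(t)$ while $X\le\widetilde{\mathcal{C}}_2(t)$, this term is $\le0$. For the linear part I would regroup exactly as in \eqref{4.21}:
$$-B_1Y+B_2X=-(B_1-B_2)Y-B_2(Y-X)\le0,$$
using $B_2>0$ and $B_1-B_2>0$ from Lemma \ref{lem4}. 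The same holds for $A_i$. Those sign facts used only Assumptions \ref{asu1}--\ref{asu2}, via \eqref{a2} and \eqref{4.12}; Assumption \ref{asu3} entered only through $A_3,B_3$, so they remain available here.

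\textbf{Step 2: the $\lambda$-terms.} The remaining $\lambda$-terms multiplying $Y$ are nonnegative, and I would bound them by $M$. By \eqref{a2} and $h<h_d$,
$$\lambda\fr{h(\gamma u+h)}{2\gamma c_vc_3}r^m\rho\widetilde S_\xi\le \fr{2}{\gamma-1}\cdot\fr{h_d}{2c_v}\cdot\fr{mc_v\gamma}{3r_d}=\fr{\gamma m h_d}{3(\gamma-1)r_d}.$$
Next, $\lambda\fr{(\gamma-1)mu^2}{2rc_3}\le \fr{m\tilde w}{b_1}$. For the $X$-equation the corresponding term involves $u^2/(rc_1)$, and I would use $u/c_1<\fr{2}{3-\gamma}$ from \eqref{4.12} to get $\le\fr{2m\tilde w}{(3-\gamma)b_1}$. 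The term with $-\fr{h(\gamma u-h)}{2\gamma c_vc_1}\lambda$ is negative and can be dropped. Hence the coefficient of $Y$ (or $X$) is at most $M-1$.

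\textbf{Step 3: the source terms.} This is the step I expect to be the main obstacle, because without Assumption \ref{asu3} the sign of $\widetilde S_{\xi\xi}$ is unknown. I would bound $h^{-\lambda}|A_3|$ and $h^{-\lambda}|B_3|$ by $\widetilde L$ using $|\widetilde S_{\xi\xi}|\le\mathcal{S}_2$, which is valid by \eqref{2.24} and Lemma \ref{lem2}. The weight is exactly what makes this work.
\begin{itemize}
\item In the $\widetilde S_{\xi\xi}$ term, $r^{2m}\rho^2$ contributes $h^{4/(\gamma-1)}$, and $h^{-\lambda}$ cancels half of it. This leaves $\fr{uh}{\gamma c_v}\fr{h}{c_1}r^{2m}\gamma_k^2h^{2/(\gamma-1)}e^{2\mathcal{S}_0/c_\gamma}\mathcal{S}_2$, which matches the second term of $\widetilde L$.
\item In the $\widetilde S_\xi$ term, $r^m\rho\widetilde S_\xi$ is controlled through \eqref{4.2} rewritten with $h^{2/(\gamma-1)}$ in the form $h^{-\lambda}r^m\rho\widetilde S_\xi\le \fr{mc_v\gamma}{3r_d^{m+1}}r^mh_d^{-2/(\gamma-1)}$. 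Together with $h/c_1<\fr{\gamma-1}{3-\gamma}$, this gives the first term of $\widetilde L$.
\end{itemize}
The delicate part is checking that every power of $h$ closes with nonnegative exponents, so that bounding by $h_d$ is legitimate.

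\textbf{Step 4: comparison along characteristics.} At a first point where, say, $Y=\widetilde{\mathcal{C}}_2(t)$ with $X\le\widetilde{\mathcal{C}}_2(t)$ before that time, Steps 1--3 give
$$\pa_1Y\le (M-1)\widetilde{\mathcal{C}}_2(t)+\widetilde L< M\widetilde{\mathcal{C}}_2(t)=\fr{d}{dt}\widetilde{\mathcal{C}}_2(t),$$
since $\widetilde L\le \widetilde K\le\widetilde{\mathcal{C}}_2(t)$. This contradicts the touching from below. Initially, the identity $h^{-\lambda}=\gamma_k e^{-S/c_\gamma}/\rho$ from \eqref{a1} gives $Y(\cdot,0)\le \gamma_k e^{\mathcal{S}_0/c_\gamma}\mathcal{C}_1/\min\rho_0<\widetilde{\mathcal{C}}_2(0)$, with strictness obtained by slightly enlarging $\widetilde K$ if needed. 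The argument tacitly uses $h>0$, i.e.\ $\rho>0$, which holds for smooth solutions on $\Omega_d$.
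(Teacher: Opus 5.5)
Your proposal is correct and is essentially the paper's proof. The paper also takes $\lambda=\frac{2}{\gamma-1}$ in Lemma~\ref{lem6}, and its rescaled variables $e^{-Mt}h^{-\frac{2}{\gamma-1}}(\alpha,\beta)$ against a constant barrier are equivalent to your time-dependent barrier $e^{Mt}\widetilde K$; it regroups the linear terms using $A_1-A_2>0$, $A_2>0$ (and likewise for the $B_i$), bounds the $\lambda$-terms by $M-1$ and $h^{-\frac{2}{\gamma-1}}A_3,\,h^{-\frac{2}{\gamma-1}}B_3$ by $\widetilde L$ exactly as you do, and closes with the same first-touching argument along characteristics, while your remarks that the sign facts of Lemma~\ref{lem4} need only Assumptions~\ref{asu1}--\ref{asu2}, that $|\widetilde S_{\xi\xi}|\le\mathcal S_2$ holds without Assumption~\ref{asu3}, and that strictness of the initial bound is not automatic (the paper simply asserts it) make explicit what the paper uses silently.
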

\begin{proof}
We take $\lambda=\fr{2}{\gamma-1}$ in \eqref{4.23} such that $\fr{\gamma-1}{2}\lambda-\fr{3-\gamma}{4}=\fr{\gamma+1}{4}$ and denote
\begin{align}\label{4.26}
\hat{\alpha}=e^{-Mt}h^{-\fr{2}{\gamma-1}}\alpha,\qquad \hat{\beta}=e^{-Mt}h^{-\fr{2}{\gamma-1}}\beta.
\end{align}
Then the governing equations of $(\hat\alpha,\hat\beta)$ read that
\begin{align}\label{4.27}
\begin{split}
\pa_1 \hat{\beta} =& \Big(\fr{\gamma+1}{4}h^{\fr{2}{\gamma-1}}e^{Mt}\hat{\beta} +B_2\Big)(\hat\alpha-\hat\beta) -(B_1-B_2)\hat\beta \\
&\ +\Big(\fr{h(\gamma u+h)}{\gamma c_\gamma c_3}r^m\rho \widetilde{S}_\xi +\fr{mu^2}{rc_3}-M\Big)\hat\beta +h^{-\fr{2}{\gamma-1}}e^{-Mt}B_3, \\
\pa_3 \hat{\alpha} =& \Big(\fr{\gamma+1}{4}h^{\fr{2}{\gamma-1}}e^{Mt}\hat{\alpha} +A_2\Big)(\hat\beta-\hat\alpha) -(A_1-A_2)\hat\beta \\
&\ +\Big(-\fr{h(\gamma u-h)}{\gamma c_\gamma c_1}r^m\rho \widetilde{S}_\xi +\fr{mu^2}{rc_1}-M\Big)\hat\beta +h^{-\fr{2}{\gamma-1}}e^{-Mt}A_3.
\end{split}
\end{align}
Recalling \eqref{4.11}-\eqref{a2}, one finds by the definition of $M_1$ that
\begin{align}\label{4.28}
\fr{h(\gamma u+h)}{\gamma c_\gamma c_3}r^m\rho \widetilde{S}_\xi +\fr{mu^2}{rc_3} \leq & \fr{\gamma h_d}{\gamma c_\gamma }\cdot\fr{m\gamma c_v}{3r_d} +\fr{m\tilde{w}}{b_1} \notag \\
=&\fr{\gamma mh_d}{3(\gamma-1)r_d}+\fr{m\tilde{w}}{b_1}\leq M-1,
\end{align}
and
\begin{align}\label{4.29}
& -\fr{h(\gamma u-h)}{\gamma c_\gamma c_1}r^m\rho \widetilde{S}_\xi +\fr{mu^2}{rc_1} \notag \\ \leq& \fr{mu^2}{rc_1}
\leq \fr{m\tilde{w}}{b_1}\cdot \Big(1+\fr{h}{c_1}\Big)\leq \fr{2m\tilde{w}}{(3-\gamma)b_1}\leq M-1.
\end{align}
Furthermore, similar to \eqref{a5}, we can employ the expressions of $A_3$, $B_3$ in \eqref{3.2} and \eqref{3.3} to estimate
\begin{align}\label{4.30}
h^{-\fr{2}{\gamma-1}}A_3, h^{-\fr{2}{\gamma-1}}B_3\leq & h^{-\fr{2}{\gamma-1}}\bigg\{\fr{u^3h^2}{\gamma c_{v}^2 r c_{1}^2 c_{3}}mc_v\gamma \cdot r^m\rho \widetilde{S}_\xi +\fr{uh^2}{\gamma c_v c_1}r^{2m}\rho^2|\widetilde{S}_{\xi\xi}|\bigg\} \notag \\
\leq & \fr{mu^2}{c_{v} r} \Big(\fr{u}{c_3}\Big)\Big(\fr{h^2}{c_{1}^2}\Big)\cdot r^m \gamma_k e^{-\fr{S}{c_\gamma}}\cdot \fr{mc_v\gamma}{3}\Big(\gamma_k r_{d}^{m+1}e^{\fr{\mathcal{S}_0}{c_\gamma}}h_{d}^{\fr{2}{\gamma-1}}\Big)^{-1} \notag \\
&\ +\fr{uh}{\gamma c_v} \Big(\fr{h}{c_1}\Big)r^{2m}\gamma_{k}^2h^{\fr{2}{\gamma-1}} e^{-\fr{2S}{c_\gamma}}\mathcal{S}_2 \notag \\
\leq & \fr{m^2\gamma \tilde{w}^2}{3r_{d}^2h_{d}^{\fr{2}{\gamma-1}}} \Big(\fr{\gamma-1}{3-\gamma}\Big)^2 +\fr{\gamma_{k}^2\tilde{w}}{\gamma c_v }\Big(\fr{\gamma-1}{3-\gamma}\Big)r_{d}^{2m}h_{d}^{\fr{\gamma+1}{\gamma-1}} e^{\fr{2\mathcal{S}_0}{c_\gamma}}\mathcal{S}_2 =\widetilde{L}.
\end{align}

We next claim that $(\hat\alpha, \hat\beta)$ satisfy the following estimates
\begin{align}\label{4.31}
\hat\alpha(r,t), \hat\beta(r,t)<\max\bigg\{\fr{\gamma_k\mathcal{C}_1}{\dps\min_{r\in[b_1,b_2]}\rho_0(r)} e^{\fr{\mathcal{S}_0}{c_\gamma}},\ \widetilde{L}\bigg\}=: \widetilde{L}_1,
\end{align}
in the domain $\Omega_d$, where the constant $\widetilde{C}_1$ is defined in \eqref{2.24}. We first note by \eqref{2.24} and \eqref{2.16} that
\begin{align*}
\hat\alpha(r,0),\hat\beta(r,0)\leq (h(r,0))^{-\fr{2}{\gamma-1}}\mathcal{C}_1 = \fr{\gamma_k \mathcal{C}_1}{\rho_0(r)}e^{-\fr{S_0(r)}{c_\gamma}} <\fr{\gamma_k\mathcal{C}_1}{\dps\min_{r\in[b_1,b_2]}\rho_0(r)} e^{\fr{\mathcal{S}_0}{c_\gamma}}\leq \widetilde{L}_1,
\end{align*}
which means that \eqref{4.31} holds at the initial time. Without loss of generality, suppose that there exists a first time point $(r',t')\in\Omega_d$ such that $\hat\alpha(r',t')=\widetilde{L}_1$ and $\hat\alpha(r,t)<\widetilde{L}_1$, $\hat\beta(r,t)\leq\widetilde{L}_1$ for $(r,t)\in\Omega_d\cap\{t<t'\}$. Then there must hold
\begin{align}\label{4.32}
\pa_3\hat\alpha(r',t')\geq0.
\end{align}
However, one applies \eqref{4.27}-\eqref{4.30} to find that
\begin{align*}
\pa_3 \hat{\alpha}(r',t') =& \bigg\{\Big(\fr{\gamma+1}{4}h^{\fr{2}{\gamma-1}}e^{Mt}\hat{\alpha} +A_2\Big)(\hat\beta-\hat\alpha) -(A_1-A_2)\hat\beta \\
&\ +\Big(-\fr{h(\gamma u-h)}{\gamma c_\gamma c_1}r^m\rho \widetilde{S}_\xi +\fr{mu^2}{rc_1}-M\Big)\hat\beta +h^{-\fr{2}{\gamma-1}}e^{-Mt}A_3\bigg\}(r',t') \\
< & -\widetilde{L}_1 +e^{-Mt'}\widetilde{L}_1<0.
\end{align*}
Here we used the result $A_1-A_2>0$ in \eqref{4.13}. The above inequality contradicts \eqref{4.32}, and then \eqref{4.31} is proved.

We combine \eqref{4.26} and \eqref{4.31} to arrive at \eqref{4.24}. The proof of the lemma is fulfilled.
\end{proof}

\section{Proof of the theorems}\label{S5}

In this section, we contribute to the proof of Theorems \ref{thm1} and \ref{thm2}.

\subsection{Proof of Theorem \ref{thm1}}\label{S51}

We first derive the positive lower bound of $h$ in the domain $\Omega_d$.
\begin{lem}\label{lem8}
Let the assumptions in Lemma \ref{lem5} hold.
Then the smooth solution $(\rho, u, S)$ of \eqref{1.1}, \eqref{1.2} in the domain $\Omega_d$ satisfies
\begin{align}\label{5.1}
h(r,t)\geq \min_{r\in[b_1,b_2]}h_0(r)\exp\bigg\{-\fr{\gamma-1}{2} \bigg(\widetilde{\mathcal{C}}_1+\fr{2m\tilde{w}}{(3-\gamma)b_1}\bigg)T_0\bigg\},
\end{align}
where the constant $\widetilde{\mathcal{C}}_1$ is given in \eqref{4.18}.
\end{lem}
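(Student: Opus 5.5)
We integrate the equation for $h$ along a characteristic and use the bounds on $\alpha$ from Lemma \ref{lem5}. The natural choice is $\partial_1 h$ in \eqref{a4}, since the equation of $\alpha$ is the one coupled to the 1-direction:
\begin{align*}
\partial_1 h = -\frac{\gamma-1}{2}h\alpha
    -\frac{h^2(\gamma u+h)}{2\gamma c_vc_3}r^m\rho \widetilde{S}_\xi
    -\frac{(\gamma-1)mu^2h}{2rc_3}.
\end{align*}
Fix $(r,t)\in\Omega_d$ with $h>0$ along the backward characteristic; by a continuity/first-time argument we may assume $h>0$ on $\Omega_d\cap\{s<t\}$, so Remark \ref{r6} applies. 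Draw the 1-characteristic $r=r_1(s)$ through $(r,t)$ back to $s=0$. It stays in $\Omega_d$ by the definition of $\Omega_d$, and $r_1(0)\in[b_1,b_2]$. Dividing by $h$ gives
\begin{align*}
\frac{{\rm d}}{{\rm d}s}\ln h(r_1(s),s) = -\frac{\gamma-1}{2}\alpha -\frac{h(\gamma u+h)}{2\gamma c_v c_3}r^m\rho\widetilde{S}_\xi -\frac{(\gamma-1)mu^2}{2rc_3}.
\end{align*}

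We bound each term from below. By Lemma \ref{lem5}, $\alpha<\widetilde{\mathcal{C}}_1$. For the geometric term, use $u/c_3<1$, $u<\tilde w$ and $r\geq b_1$ in $\Omega_d$ (characteristics move right since $c_1>0$). This gives $\frac{mu^2}{rc_3}\le \frac{m\tilde w}{b_1}\le \frac{2m\tilde w}{(3-\gamma)b_1}$. The $\widetilde S_\xi$ term is nonpositive, and it has no counterpart in \eqref{5.1}. We must therefore either absorb it into the slack between $\frac{m\tilde w}{b_1}$ and $\frac{2m\tilde w}{(3-\gamma)b_1}$, or find a way to make it disappear.

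Absorbing it: by \eqref{a2} and $\frac{h(\gamma u+h)}{c_3}\le \gamma h\le \gamma h_d$, the term is at most $\frac{m h_d}{6r_d}$. Since $h_d=\frac{\gamma-1}{4}\tilde w$ and $r_d\ge b_2>b_1$, this is at most $\frac{(\gamma-1)m\tilde w}{24 b_1}$. Writing the needed inequality in units of $\frac{\gamma-1}{2}$, we need
\begin{align*}
\frac{m\tilde w}{b_1}+\frac{m\tilde w}{12 b_1}\le \frac{2m\tilde w}{(3-\gamma)b_1},
\end{align*}
which holds because $\frac{2}{3-\gamma}>1$ for $\gamma>1$; more precisely it needs $\frac{13}{12}\le\frac{2}{3-\gamma}$, i.e.\ $\gamma\ge 15/13$. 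This condition is the main obstacle.

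Making it disappear: the alternative is to integrate along the 3-characteristic using $\partial_3 h$ instead. Its entropy term $+\frac{h^2(\gamma u-h)}{2\gamma c_v c_1}r^m\rho\widetilde S_\xi$ is nonnegative and can be dropped. Then $\beta<\widetilde{\mathcal{C}}_1$ and $\frac{mu^2}{rc_1}\le \frac{m\tilde w}{b_1}\cdot\frac{u}{c_1}<\frac{2m\tilde w}{(3-\gamma)b_1}$ by \eqref{4.12}, which yields exactly the constant in \eqref{5.1}. I would use this route.

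Either way, integrating from $0$ to $t\le T_0$ gives $h(r,t)\ge h_0(r_3(0))\exp\{-\frac{\gamma-1}{2}(\widetilde{\mathcal{C}}_1+\frac{2m\tilde w}{(3-\gamma)b_1})T_0\}$, which is \eqref{5.1}. The positivity of this bound closes the continuity argument, so $h>0$ throughout $\Omega_d$.
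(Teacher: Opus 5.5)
Your chosen route is correct and is exactly the paper's proof: integrate the $\partial_3 h$ equation of \eqref{a4} along the backward 3-characteristic, drop the nonnegative entropy term, and use $\beta<\widetilde{\mathcal{C}}_1$ together with $\frac{mu^2}{rc_1}<\frac{2m\tilde w}{(3-\gamma)b_1}$ from \eqref{4.12}; the paper does the same with $h^{\frac{2}{\gamma-1}}$ in place of $\ln h$. The 1-characteristic detour is unnecessary, and its entropy bound drops a factor of $\gamma$ (it should be $\frac{m\gamma h_d}{6r_d}$), but you discard that route, so this does not affect the argument.
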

\begin{proof}
We rewrite the last equation of \eqref{a4} as
\begin{align}\label{5.2}
\pa_3\Big(h^{\fr{2}{\gamma-1}}\Big)=& -h^{\fr{2}{\gamma-1}}\bigg(\beta
    -\frac{h(\gamma u-h)}{\gamma c_\gamma c_1}r^m\rho \widetilde{S}_\xi
    +\frac{mu^2}{rc_1}\bigg) \notag \\
\geq & -h^{\fr{2}{\gamma-1}}\bigg(\beta +\frac{mu^2}{rc_1}\bigg).
\end{align}
For any point $(r,t)\in\Omega_d$, we draw the $3$-characteristic $r=r_3(t)$ through the point $(r,t)$ up to the line $t=0$. Integrating \eqref{5.2} along $r=r_3(t)$ and utilizing \eqref{4.18} yields
\begin{align}\label{5.3}
h^{\fr{2}{\gamma-1}}\geq & (h_0(r_3(0)))^{\fr{2}{\gamma-1}}\exp\bigg\{ -\int_{0}^t\bigg(\beta +\frac{mu^2}{rc_1}\bigg)(r_3(t),t)\ {\rm d}t\bigg\} \notag \\
\geq & \Big(\min_{r\in[b_1,b_2]}h_0(r)\Big)^{\fr{2}{\gamma-1}}\exp\bigg\{ -\bigg(\widetilde{\mathcal{C}}_1+\fr{2m\tilde{w}}{(3-\gamma)b_1}\bigg)T_0\bigg\},
\end{align}
which directly leads to \eqref{5.1}. The proof of the lemma is finished.
\end{proof}

\textbf{Proof of Theorem \ref{thm1}:} The proof is based on the classical framework of Li \cite{LiBook} by extending the local smooth solution to global domain. The local existence of smooth solutions can be directly obtained by the classical theory of quasilinear hyperbolic systems, see e.g. Li and Yu \cite{Li-Yu}. Moreover, the local existence time $\delta$ of the smooth solution depends only on the $C^1$ norm of the solution and the lower bound of $h$, see Remark 4.1. in Chapter 1 in \cite{Li-Yu}. According to the estimates in Lemmas \ref{lem2}, \ref{lem3}, \ref{lem5} and \ref{lem8}, we know that the local existence time $\delta$ is a constant. Thus we can solve a finite number of
local existence problems to extend the solution in the domain $\Omega_d\cap\{0\leq t\leq \delta\}$ to the entire domain $\Omega_d$. Finally, the properties of solution in \eqref{2.27} follow directly from the estimates in Lemmas \ref{lem2}, \ref{lem3} and \ref{lem5}. Therefore, we have completed the proof of Theorem \ref{thm1}.

\subsection{Proof of Theorem \ref{thm2}}\label{S52}

We first have the positive lower bound of $h$ in the new scene.
\begin{lem}\label{lem9}
Let Assumptions \ref{asu1} and \ref{asu2} hold.
Then the smooth solution $(\rho, u, S)$ of \eqref{1.1}, \eqref{1.2} in the domain $\Omega_d$ satisfies
\begin{align}\label{5.4}
h(r,t)\geq \underline{h}\ ,
\end{align}
where
$$
\underline{h}=\min_{r\in[b_1,b_2]}h_0(r)\exp\bigg\{-\fr{\gamma-1}{2} \bigg(\widetilde{\mathcal{C}}_2h_{d}^{\fr{2}{\gamma-1}}+\fr{2m\tilde{w}}{(3-\gamma)b_1}\bigg)T_0\bigg\},
$$
and the constant $\widetilde{\mathcal{C}}_2$ is given in \eqref{4.25}.
\end{lem}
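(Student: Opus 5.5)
We follow the proof of Lemma \ref{lem8}, replacing the unweighted upper bound of $\beta$ from Lemma \ref{lem5}, which needs Assumption \ref{asu4}, by the weighted bound of Lemma \ref{lem7}, which needs only Assumptions \ref{asu1} and \ref{asu2}.

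\textbf{Step 1: the transport inequality.} We start from the last equation of \eqref{a4}, written as in \eqref{5.2}:
\begin{align*}
\pa_3\Big(h^{\fr{2}{\gamma-1}}\Big)= -h^{\fr{2}{\gamma-1}}\bigg(\beta
    -\frac{h(\gamma u-h)}{\gamma c_\gamma c_1}r^m\rho \widetilde{S}_\xi
    +\frac{mu^2}{rc_1}\bigg).
\end{align*}
By Lemma \ref{lem3} and Remark \ref{r6} we have $z>0$, $c_1>0$ and $\gamma u-h>0$. By \eqref{4.2} we have $\widetilde{S}_\xi\geq0$. Hence the entropy term is nonpositive inside the bracket and may be dropped, which gives the inequality in \eqref{5.2}.

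\textbf{Step 2: bounding the bracket.} We use \eqref{4.11}, \eqref{4.12} and $r\geq b_1$, which holds in $\Omega_d$ since $c_1>0$ makes all characteristics move rightward. These give
\[
\frac{mu^2}{rc_1}\leq \frac{m\tilde w}{b_1}\cdot\frac{u}{c_1}\leq \frac{2m\tilde w}{(3-\gamma)b_1}.
\]
For $\beta$, Lemma \ref{lem7} gives $\beta< \widetilde{\mathcal{C}}_2 h^{\fr{2}{\gamma-1}}\leq \widetilde{\mathcal{C}}_2h_d^{\fr{2}{\gamma-1}}$, using $h<h_d$ from \eqref{4.11}. If $\beta\leq 0$ the term only helps, so in every case
\[
\pa_3\Big(h^{\fr{2}{\gamma-1}}\Big)\geq -h^{\fr{2}{\gamma-1}}\bigg(\widetilde{\mathcal{C}}_2h_{d}^{\fr{2}{\gamma-1}}+\fr{2m\tilde{w}}{(3-\gamma)b_1}\bigg).
\]
This step is where the argument really differs from Lemma \ref{lem8}, since we no longer know that $\beta$ is bounded.

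\textbf{Step 3: integration along a 3-characteristic.} Take any $(r,t)\in\Omega_d$ and draw the 3-characteristic through it back to some $r_3(0)\in[b_1,b_2]$. The inequality above is a linear Gronwall inequality for $h^{\fr{2}{\gamma-1}}$. Integrating it over $[0,t]$ with $t\leq T_0$ gives
\[
h^{\fr{2}{\gamma-1}}\geq \Big(\min_{[b_1,b_2]}h_0\Big)^{\fr{2}{\gamma-1}}\exp\bigg\{-\bigg(\widetilde{\mathcal{C}}_2h_{d}^{\fr{2}{\gamma-1}}+\fr{2m\tilde{w}}{(3-\gamma)b_1}\bigg)T_0\bigg\}.
\]
Raising both sides to the power $\fr{\gamma-1}{2}$ gives \eqref{5.4} with exactly the stated $\underline{h}$.

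\textbf{Main obstacle: circularity.} Lemma \ref{lem7} and \eqref{4.12} both presuppose $h>0$. We handle this with a continuity argument in time. As long as the smooth solution exists we have $h>0$, because $\rho>0$ for a smooth solution with $\rho_0>0$. On that time interval the estimates above apply and give the quantitative bound $h\geq\underline h$. No separate bootstrap is therefore needed beyond working on the existence interval of the smooth solution.
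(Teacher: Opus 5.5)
Your proposal is correct and matches the paper's intended argument: the paper proves Lemma~\ref{lem9} by reference to Lemma~\ref{lem8}, i.e.\ by integrating \eqref{5.2} along the 3-characteristic. The only change is that the upper bound on $\beta$ from Lemma~\ref{lem5} is replaced by $\beta<\widetilde{\mathcal{C}}_2h^{\fr{2}{\gamma-1}}\le\widetilde{\mathcal{C}}_2h_d^{\fr{2}{\gamma-1}}$ from Lemma~\ref{lem7} and \eqref{4.11}, which is exactly what you do. Your remark that $h>0$ on the lifespan of the smooth solution is a reasonable point that the paper leaves implicit; it follows from the continuity equation along particle paths.
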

\begin{proof}
The proof is similar to that of Lemma \ref{lem8}, so we omit it here.
\end{proof}

To show the singularity formation of the smooth solution, we take $\lambda=\fr{3-\gamma}{2(\gamma-1)}$ in \eqref{4.23} to acquire
\begin{align}\label{5.5}
\begin{split}
\pa_1 \tilde\beta =&-\dfrac{\gamma+1}{4}h^{\fr{3-\gamma}{2(\gamma-1)}} \tilde{\beta}^{2}
   +\Big(-B_{1}+\fr{(3-\gamma)h(\gamma u+h)}{4\gamma c_\gamma c_3}r^m\rho \widetilde{S}_\xi   +\fr{(3-\gamma)mu^2}{4rc_3} \Big)\tilde\beta \\
&\ + B_{2} \tilde\alpha +h^{-\fr{3-\gamma}{2(\gamma-1)}}B_3, \\[4pt]
\pa_3 \tilde\alpha =&-\dfrac{\gamma+1}{4}h^{\fr{3-\gamma}{2(\gamma-1)}} \tilde{\alpha}^{2}
  +\Big(-A_{1}-\fr{(3-\gamma)h(\gamma u-h)}{4\gamma c_\gamma c_1}r^m\rho \widetilde{S}_\xi  +\fr{(3-\gamma)mu^2}{4rc_1} \Big)\tilde\alpha \\
&\ + A_{2} \tilde\beta +h^{-\fr{3-\gamma}{2(\gamma-1)}}A_3,
\end{split}
\end{align}
where
\begin{align}\label{5.6}
\tilde\alpha =h^{-\fr{3-\gamma}{2(\gamma-1)}}\alpha,\qquad \tilde\beta =h^{-\fr{3-\gamma}{2(\gamma-1)}}\beta.
\end{align}
\begin{rem}\label{r7}
The two equations in \eqref{5.5} are decoupled in their leading quadratic order terms, which makes it convenient for us to control the right-hand side terms.
\end{rem}
In addition, note that
$$
\tilde\alpha=h^{\fr{\gamma+1}{2(\gamma-1)}}\cdot h^{-\fr{2}{\gamma-1}}\alpha,\quad \tilde\beta=h^{\fr{\gamma+1}{2(\gamma-1)}}\cdot h^{-\fr{2}{\gamma-1}}\beta,
$$
from which and \eqref{4.24} yields the upper bounds of $(\tilde\alpha, \tilde\beta)$
\begin{align}\label{a7}
\tilde\alpha< h_{d}^{\fr{\gamma+1}{2(\gamma-1)}}\widetilde{\mathcal{C}}_2,\qquad \tilde\beta< h_{d}^{\fr{\gamma+1}{2(\gamma-1)}}\widetilde{\mathcal{C}}_2,
\end{align}
provided that Assumptions \ref{asu1} and \ref{asu2} are fulfilled. Here the constant $\widetilde{\mathcal{C}}_2$ is given in \eqref{4.25}.

\textbf{Proof of Theorem \ref{thm2}:} We first estimate the right-hand side terms of \eqref{5.5}. Similar to \eqref{4.28} and \eqref{4.29}, one acquires
\begin{align}\label{5.7}
\begin{split}
\bigg|\fr{(3-\gamma)h(\gamma u+h)}{4\gamma c_\gamma c_3}r^m\rho \widetilde{S}_\xi   +\fr{(3-\gamma)mu^2}{4rc_3}\bigg|\leq & \fr{3-\gamma}{4}M, \\
\bigg|-\fr{(3-\gamma)h(\gamma u-h)}{4\gamma c_\gamma c_1}r^m\rho \widetilde{S}_\xi  +\fr{(3-\gamma)mu^2}{4rc_1}\bigg| \leq & \fr{3-\gamma}{4}M,
\end{split}
\end{align}
where the constant $M$ is defined in \eqref{a6}. We recall the expressions of $A_i, B_i$ in \eqref{3.2}, \eqref{3.3} and apply \eqref{4.11}-\eqref{a2} to obtain
\begin{align}\label{5.8}
0<A_1,B_1\leq & \fr{mc_3}{rc_{1}^2}\Big(\fr{\gamma-1}{2}u^2-h^2\Big)+\fr{2(3-\gamma)mu^2h^2}{rc_{1}^2c_3} \notag \\
\leq & \fr{mc_3}{rc_{1}^2}\cdot c_1c_3+\fr{2(3-\gamma)mu}{r}\Big(\fr{h}{c_1}\Big)^2\fr{u}{c_3} \notag \\
\leq & \fr{4m\tilde{w}}{(3-\gamma)b_1}+\fr{2(\gamma-1)^2m\tilde{w}}{(3-\gamma)b_1}< \fr{12m\tilde{w}}{(3-\gamma)b_1},
\end{align}
and
\begin{align}\label{5.9}
0<A_2,B_2\leq & \fr{mc_3}{2rc_{1}^2}\Big(\fr{\gamma-1}{2}u^2-h^2\Big) +\fr{h[3\gamma u^2+(\gamma+5)uh-h^2]}{4\gamma c_v c_1c_3}r^m\rho \widetilde{S}_\xi \notag \\
\leq & \fr{mc_3}{2rc_{1}^2}\Big(\fr{\gamma-1}{2}u^2-h^2\Big) +\fr{mc_3}{2rc_{1}^2}\Big(\fr{\gamma-1}{2}u^2-h^2\Big) +\fr{(3-\gamma)mu^2h^2}{rc_{1}^2c_3} \notag \\ < & \fr{8m\tilde{w}}{(3-\gamma)b_1}.
\end{align}
Furthermore, it concludes by \eqref{4.30} that
\begin{align}\label{5.10}
0<&h^{-\fr{3-\gamma}{2(\gamma-1)}}A_3,h^{-\fr{3-\gamma}{2(\gamma-1)}}B_3 \notag \\
\leq & h^{\fr{\gamma+1}{2(\gamma-1)}}\cdot h^{-\fr{2}{\gamma-1}}\bigg\{\fr{u^3h^2}{\gamma c_{v}^2 r c_{1}^2 c_{3}}mc_v\gamma \cdot r^m\rho \widetilde{S}_\xi +\fr{uh^2}{\gamma c_v c_1}r^{2m}\rho^2|\widetilde{S}_{\xi\xi}|\bigg\} \notag \\
\leq & h^{\fr{\gamma+1}{2(\gamma-1)}} \widetilde{L}< h_{d}^{\fr{\gamma+1}{2(\gamma-1)}} \widetilde{L},
\end{align}
where the constant $\widetilde{L}$ is given in \eqref{a6}.

Combining \eqref{a7}-\eqref{5.10}, we deduce by \eqref{5.5} and \eqref{5.4}
\begin{align}\label{5.11}
\begin{split}
\pa_1 \tilde\beta <&-\dfrac{\gamma+1}{8}{\underline{h}}^{\fr{3-\gamma}{2(\gamma-1)}} \tilde{\beta}^{2}
   +\Phi, \\
\pa_3 \tilde\alpha <&-\dfrac{\gamma+1}{8}{\underline{h}}^{\fr{3-\gamma}{2(\gamma-1)}} \tilde{\alpha}^{2}
  +\Psi,
\end{split}
\end{align}
where
\begin{align}\label{5.12}
\begin{split}
\Phi=&-\dfrac{\gamma+1}{8}{\underline{h}}^{\fr{3-\gamma}{2(\gamma-1)}} \tilde{\beta}^{2} +\Big(\fr{12m\tilde{w}}{(3-\gamma)b_1} + \fr{3-\gamma}{4}M \Big)|\tilde\beta| \\ &\ +\fr{8m\tilde{w}}{(3-\gamma)b_1}h_{d}^{\fr{\gamma+1}{2(\gamma-1)}}\widetilde{\mathcal{C}}_2 +h_{d}^{\fr{\gamma+1}{2(\gamma-1)}} \widetilde{L}, \\
\Psi=&-\dfrac{\gamma+1}{8}{\underline{h}}^{\fr{3-\gamma}{2(\gamma-1)}} \tilde{\alpha}^{2} +\Big(\fr{12m\tilde{w}}{(3-\gamma)b_1} + \fr{3-\gamma}{4}M \Big)|\tilde\alpha| \\ &\ +\fr{8m\tilde{w}}{(3-\gamma)b_1}h_{d}^{\fr{\gamma+1}{2(\gamma-1)}}\widetilde{\mathcal{C}}_2 +h_{d}^{\fr{\gamma+1}{2(\gamma-1)}} \widetilde{L}.
\end{split}
\end{align}
Here we used the non negativity of $A_2$ and $B_2$. It is evident by \eqref{5.12} that there exists a positive constant $\mathcal{N}_1$ such that $\Phi\leq0$ ($\Psi\leq0$) if $\tilde\beta\leq -\mathcal{N}_1$ ($\tilde\alpha\leq -\mathcal{N}_1$). If $\Phi\leq0$ on a 1-characteristic $r_1(t)$ or $\Psi\leq0$ on a 3-characteristic $r_3(t)$, one can easily employ \eqref{5.11} to estimate the blowup time
\begin{align*}
T^*=\fr{8}{-(\gamma+1)\tilde\beta(r_1(0),0)}\underline{h}^{-\fr{3-\gamma}{2(\gamma-1)}}\quad {\rm or}\quad \fr{8}{-(\gamma+1)\tilde\alpha(r_3(0),0)}\underline{h}^{-\fr{3-\gamma}{2(\gamma-1)}}.
\end{align*}
To ensure $T^*<T\leq T_0$, we only need the initial data to meet the following
\begin{align}\label{5.13}
\tilde\beta(r^*,0)\leq -\fr{8}{(\gamma+1)T}\underline{h}^{-\fr{3-\gamma}{2(\gamma-1)}}, \qquad {\rm or}\qquad \tilde\alpha(r^*,0)\leq -\fr{8}{(\gamma+1)T}\underline{h}^{-\fr{3-\gamma}{2(\gamma-1)}},
\end{align}
for some number $r^*\in(b_1,b_2)$.

According to the above analysis, we now assume that the initial $\alpha_0(r)$ or $\beta_0(r)$ is very negative at a point $r^*\in(b_1,b_2)$ such that
\begin{align}\label{5.14}
\alpha_0 (r^*)\leq -\mathcal{N},\qquad {\rm or}\qquad \beta_0 (r^*) \leq -\mathcal{N},
\end{align}
where
\begin{align}\label{5.15}
\mathcal{N}=\max\bigg\{\fr{8}{(\gamma+1)T} \Big(\fr{h_d}{\underline{h}}\Big)^{\fr{3-\gamma}{2(\gamma-1)}},\ h_{d}^{\fr{3-\gamma}{2(\gamma-1)}}\mathcal{N}_1\bigg\}.
\end{align}
Then it suggests by \eqref{5.6} and \eqref{5.14} that
\begin{align}\label{5.16}
\tilde\alpha (r^*,0)\leq -\max\bigg\{\fr{8{\underline{h}}^{-\fr{3-\gamma}{2(\gamma-1)}}}{(\gamma+1)T},\ \mathcal{N}_1\bigg\},\quad {\rm or}\quad \tilde\beta (r^*,0) \leq -\max\bigg\{\fr{8{\underline{h}}^{-\fr{3-\gamma}{2(\gamma-1)}}}{(\gamma+1)T} ,\ \mathcal{N}_1\bigg\}.
\end{align}
We assert that, before the blowup occurs, there always hold
\begin{align}\label{5.17}
\tilde\alpha(r_3(t;r^*,0),t)\leq -\mathcal{N}_1,\qquad {\rm or}\qquad \tilde\beta(r_1(t;r^*,0),t) \leq -\mathcal{N}_1,
\end{align}
where $r_3(t;r^*,0),t$ and $r_1(t;r^*,0)$ are the 3- and 1-characteristics through the point $(r^*,0)$, respectively. Without loss of generality, let $\alpha_0 (r^*)\leq -\mathcal{N}$ be assumed and accordingly the first equation of \eqref{5.16} is satisfied. In the light of the source of $\mathcal{N}_1$ before, we see that $\Psi(r*,0)\leq0$, which together with \eqref{5.11} yields
$$
\pa_3\tilde\alpha(r^*,0) <0.
$$
Thus $\tilde\alpha(r_3(t;r^*,0),t)<\mathcal{N}_1$ for small $t>0$. Suppose that there exists a first time $t'>0$ such that $\tilde\alpha(r_3(t';r^*,0),t')=\mathcal{N}_1$ and $\tilde\alpha(r_3(t;r^*,0),t)<\mathcal{N}_1$ for $t<t'$. Then there must have
$$
\pa_3\tilde\alpha(r_3(t';r^*,0),t')\geq0.
$$
On the other hand, it is note by Lemma \ref{lem7} that the upper bound estimates in \eqref{a7} remain valid until the blowup occurs. Hence one acquires for $t\in[0,t']$
$$
\Psi(r_3(t;r^*,0),t)\leq 0,
$$
which implies by \eqref{5.11} that
\begin{align*}
\pa_3 \tilde\alpha(r_3(t';r^*,0),t') <0,
\end{align*}
a contradiction, which proves the above assertion. Based on the selection of $\mathcal{N}$ in \eqref{5.15}, we know that the singularity forms no later than $T$.

In sum, we have completed the proof of Theorem \ref{thm2}.

\section*{Acknowledgements}

The first and second authors were partially supported by National Science Foundation (DMS-2008504, DMS-2306258), the third author was partially supported by National Natural Science Foundation of China (12171130) and Natural Science Foundation of
Zhejiang province of China (LMS25A010014).

\section*{Statements and Declarations}
The authors declare that they have no conflict of interest.

\section*{Data Availability Statement}
No data was used for the research described in the paper.

\end{document}